\theoremstyle{plain}
\newtheorem{theorem}{Theorem}[section]
\newtheorem{lemma}[theorem]{Lemma}
\newtheorem{proposition}[theorem]{Proposition}
\newtheorem{corollary}[theorem]{Corollary}
\theoremstyle{definition}
\theoremstyle{remark}
\newtheorem{remark}{Remark}
\numberwithin{equation}{section}
\newcommand{\J}{\mathcal{J}}
\newcommand{\R}{\mathbb{R}}
\newcommand{\U}{\mathcal{U}}
\newcommand{\e}{\varepsilon}
\newcommand{\weak}{\rightharpoonup}
\newcommand{\de}{\,\mathrm{d}}
\newcommand{\FundingLogos}{%
  % Adjust height here but DO NOT make the EU emblem smaller than 1cm for print.
  % Use the official files downloaded from the Commission / ERC sites.
  \raisebox{0pt}{\includegraphics[height=1.5cm]{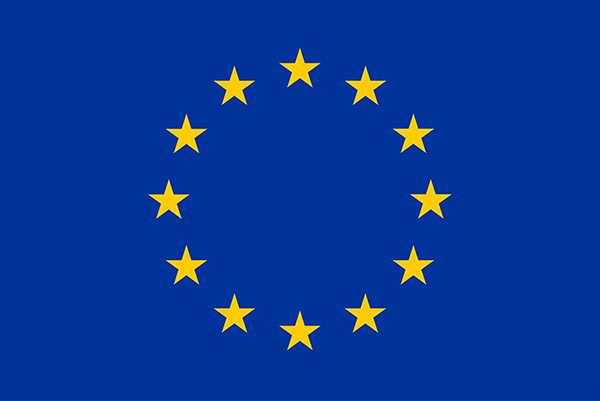}}%
  \hspace{1em}%
  \raisebox{0pt}{\includegraphics[height=1.5cm]{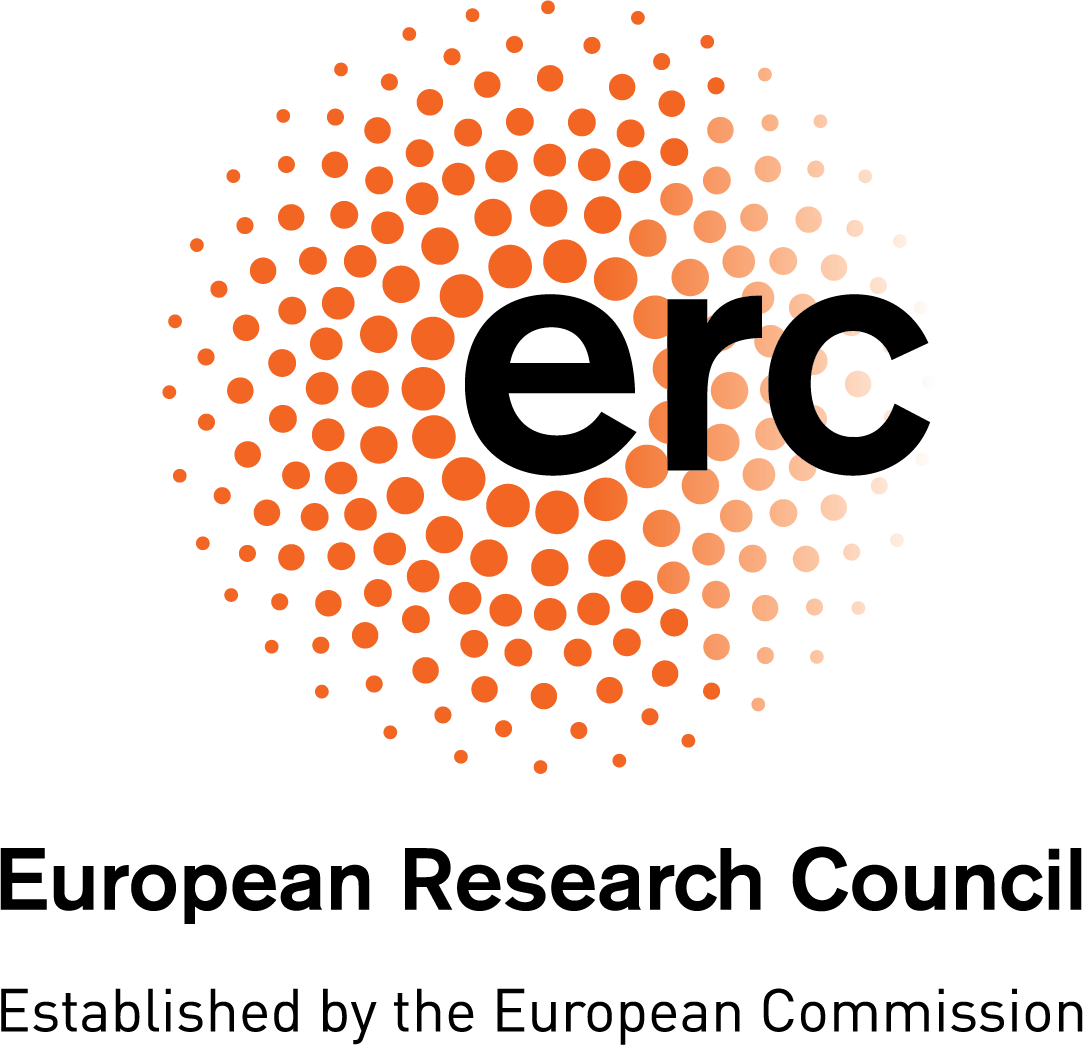}}%
}
\crefname{theorem}{Theorem}{Theorems}
\crefname{lemma}{Lemma}{Lemmas}
\crefname{corollary}{Corollary}{Corollaries}
\crefname{section}{Section}{Sections}
\crefname{subsection}{Subsection}{Subsections}
\crefname{proposition}{Proposition}{Proposition}
\crefname{defn}{Definition}{Definitions}
\crefname{appendix}{Appendix}{Appendices}
\crefname{remark}{Remark}{Remarks}
\crefname{table}{Table}{Tables}
\title[Ensemble Optimal Control for drug resistance in cancer therapies]{Ensemble Optimal Control for managing drug resistance in cancer therapies}
\author[A. Scagliotti]{Alessandro Scagliotti$^{1,2}$}%\footrecall{tum}\footrecall{mcml}}
\email{scag@ma.tum.de}
\address{$^1$CIT School, Technical University of Munich, Garching bei M\"unchen, Germany.}
\address{$^2$Munich Center for Machine Learning (MCML), Munich, Germany.}
\author[F. Scagliotti]{Federico Scagliotti$^{3,4}$}
\email{federico.scagliotti@icsmaugeri.it}
\address{$^3$Medical Oncology Unit, Istituti Clinici Scientifici Maugeri IRCCS,
Pavia, Italy.}
\address{$^4$Department of Internal Medicine and Medical Therapeutics, University of
Pavia, Pavia, Italy.}
\author[L.D. Locati]{Laura Deborah Locati$^{3,4}$}
\email{lauradeborah.locati@unipv.it}
\author[F. Sottotetti]{Federico Sottotetti$^{3,4}$}
\email{federico.sottotetti@icsmaugeri.it}
\date{\today}
\begin{document}

\maketitle

\begin{abstract}
    In this paper, we explore the application of ensemble optimal control to derive enhanced strategies for pharmacological cancer treatment, and we tackle the problem of the long-term management of the disease, i.e., when the complete eradication of the tumor is not achievable.
In particular, we focus on moving beyond the classical clinical approach of giving the patient the maximal tolerated drug dose (MTD), which does not properly exploit the fight among sensitive and resistant cells for the available resources.
Here, we employ a Lotka-Volterra model to describe the competing subpopulations, and we enclose this system within the ensemble control framework.
In the first part, we establish general results suitable for application to various cancers.
Then, we carry out numerical simulations in the setting of prostate cancer treated with androgen deprivation therapy, yielding a computed policy that is reminiscent of the medical `active surveillance' paradigm.
Finally, inspired by the numerical evidence, we propose a variant of the celebrated adaptive therapy (AT), which we call `Off-On' AT.

    \subsection*{Keywords:} Ensemble optimal control, Cancer modeling, Drug resistance management, Gradient-based optimization.

    \subsection*{Mathematics Subject Classification:}
49M25, 49M05, 92C50, 49J45.

\end{abstract}

\section{Introduction} \label{sec:intro}

In this paper, we consider a simple differential model for drug resistance in pharmacological cancer treatments, and we address the uncertainty that affects the dynamics and the Cauchy datum using tools of ensemble control of ODEs.

We recall that{, given a time horizon $T>0$,} an \emph{ensemble of control systems} {in $\R^n$} is a family of controlled ODEs of the form 
\begin{equation} \label{eq:intro_gener_ens}
    \begin{cases}
        \dot x^\theta(t) = G^\theta\big(t, x^\theta(t), u(t) \big) & \mbox{a.e. in } [0,T], \\
        x^\theta(0) = x_0^\theta,
    \end{cases}
\end{equation}
where $u\colon [0,T] \to \R^m$ is the {$m$-dimensional} control {input, $[0,T]\ni t\mapsto x^\theta(t) \in \R^n$ is the corresponding trajectory}, and where the dynamics $G^\theta\colon [0,T]\times \R^n\times \R^m\to\R^n$ and the initial condition $x_0^\theta$ depend continuously on the (unknown) {$k$-dimensional} parameter $\theta$, which varies in a compact set $\Theta\subset \R^k$. 
In this framework, we aim to find a common policy $t \mapsto u(t)$ for \emph{simultaneously} driving every system of the parametrized family \eqref{eq:intro_gener_ens}. 
This is typically the case when the parameters that appear in the dynamics are subject to statistical errors, and we seek to compute a control that works in every situation (see \cite{RuLi12}). 
Problems involving ensembles are a timely source of interest for researchers working in Mathematical Control. Indeed, on the one hand, among the recent theoretical contributions in the field, we report \cite{aronna2025average,BK19,BK24} for \emph{averaged} optimal control problems, and \cite{AB25,Scag25} for the \emph{minimax} optimization.
Moreover, the Hamilton-Jacobi-Bellman equation related to ensemble control has been considered in \cite{aronna2024dynamic}.
On the other hand, owing to its flexibility, this framework has found several applications, e.g.~in quantum control \cite{AuBoSi18,ChiGau18,RABS}, and in the mathematical modeling of Deep Learning (see \cite{RuZu23,ASZu,ABR24,Cipriani2025}) and of Reinforcement Learning (see \cite{MP18,PPF21}), to mention a few.
To the best of our knowledge, here we employ for the first time the viewpoint of ensemble control for tuning drug dosage in cancer therapy. 
Nonetheless, the interplay between optimal control and tumor modeling has a long and glorious story (see the textbook \cite{schattler2015optimal} for an introduction and for a historical overview).
Among recent contributions, we refer the reader to \cite{cunningham2018optimal,ledzewicz2024optimal}, and to \cite{edduweh2024liouville} for a Liouville equation for prostate cancer. Moreover, we mention 
\cite{aguade2024modeling} for a generalized Lotka-Volterra model for the ecological variety of tumors environment, and \cite{morselli2023agent,morselli2024hybrid} for models on oncolytic viruses.

{In the first part of the paper (\cref{sec:theory}), we develop in a general setting a framework for ensemble optimal control for cancer therapy.
The only major requirement is that the differential model must be affine in the control variable, i.e., it should be an $n$-dimensional system of ODEs of the form
\begin{equation} \label{eq:intro_affine_syst}
    \begin{cases}
        \dot Z^\theta(t) = F_0^\theta\big(Z^\theta(t)\big) + F_1^\theta\big(Z^\theta(t)\big)w(t), \\
        Z^\theta(0) = Z_0^\theta,
    \end{cases}
\end{equation}
where $t\mapsto w(t) \in \mathbb R$ is the control that acts on the system, $\theta\in\Theta$ collects the uncertain parameters, and $F_0^\theta, F_1 ^\theta \colon \R^n\to \R^n$ are, respectively, the uncontrolled and the controlled part of the dynamics. 
This setting encloses rather sophisticated multi-species models with secondary resistance mechanisms, like, e.g., 
\begin{equation} \label{eq:intro_soph_model}
\begin{split}
    &\begin{cases}
        \dot X_i^\theta(t) = r_i \left( 1- \frac1K \sum_{j=1}^n X_j^\theta(t) \right)  \left(  1-2d_i^I \frac{D(t)}{D_{\max}}\right) X_i^\theta(t) - d_i^T X_i^\theta(t) \\
        \qquad \qquad + \sum_{j=1}^n\left( A_{i,j} + A^I_{i,j}\frac{D(t)}{D_{\max}} \right) X_j^\theta(t) \qquad \mbox{for }i=1,\ldots,n, \\
        X^\theta_i(0) = f_i N_0 \qquad\qquad\qquad\qquad\qquad\qquad\quad \quad \, \mbox{for }i=1,\ldots,n,
    \end{cases}\\
& \quad N^\theta(t) = \sum_{i=1}^n X_i^\theta(t),\\
& \quad \theta = \left( (r_i)_{i}, K, (d^I_i)_{i}, (d^T_i)_{i},  (A_{i,j})_{i,j} , (A_{i,j}^I)_{i,j}, (f_i)_i, N_0 \right) \in \Theta,
    \end{split}
\end{equation}
where $t\mapsto D(t)$ is the control and represents the dose of the drug given to a patient, and $t\mapsto X_i^\theta(t)$ is the function describing the size of the $i$-th cancer sub-population as the time $t$ varies (see also \cref{tab:parameters}).
\begin{table}[b]
\centering
\begin{tabular}{lll}
\toprule
\textbf{Parameter} & \textbf{Meaning} & \textbf{Units} \\
\midrule
$X_i(t)$ & Size of subpopulation $i$ & cells \\
$N(t)$ & Total population $\sum_{i=1}^n X_i(t)$ & cells \\
$r_i$ & Proliferation rate of subpopulation $i$ & time$^{-1}$ \\
$K$ & Carrying capacity & cells \\
$d_i^T$ & Turnover (natural death) rate & time$^{-1}$ \\
$d_i^I$ & Drug sensitivity coefficient of subpopulation $i$ & dimensionless \\
$A_{i,j} \ i\neq j$ & Spontaneous transition rate $j \to i$ & time$^{-1}$ \\
$A_{i,i} \leq 0$ & Spontaneous `evolution rate' of subpopulation $i$ & time$^{-1}$ \\
$A^I_{i,j} \ i\neq j$ & Drug-induced transition rate $j \to i$ & time$^{-1}$ \\
$A_{i,i}^I \leq 0$ & Drug-induced `evolution rate' of subpopulation $i$ & time$^{-1}$ \\
$D(t)$ & Drug dosage (control) & dosage \\
$D_{\max}$ & Maximum drug dosage & dosage \\
$f_i$ & Initial fraction of subpopulation $i$ & dimensionless \\
$N_0$ & Initial total population & cells \\
\bottomrule
\end{tabular}
\caption{Model parameters and variables for the system in \cref{eq:intro_soph_model}.}
\label{tab:parameters}
\end{table}
In fact, we can consider even more refined models than the one in \eqref{eq:intro_soph_model}, where the drug concentration around the cancer cells does coincide with the drug dose.
In a more realistic description, concentration $t\mapsto C(t)$ and dose $t\mapsto D(t)$ can be related via a differential equation, such as, e.g.,
\begin{equation*}
    \dot C(t) = -k^C C(t) + k^I \frac{D(t)}{D_{\max}} .
\end{equation*}
In this case, we set $Z\coloneqq (C,X_1,\ldots,X_n)$ and we modify accordingly \eqref{eq:intro_soph_model} so that the induced death rate and the inter-species evolution is triggered by $C(t)$ in place of $D(t)$. Moreover, a direct computation shows that $t\mapsto Z(t)$ solves as well a control-affine system of the form \eqref{eq:intro_affine_syst}, where $t\mapsto D(t)$ is once again the control input. We insist on the fact that the affine dependence in the control variable is crucial for our analysis---in particular, for the results contained in \cref{thm:exist_Gamma}.
Once a suitable control-affine model has been selected, the ensemble optimal control approach consists in finding a drug administration policy $t\mapsto D(t)$ that takes into account the uncertainty that affects \emph{(some of)} the model parameters.}
The goal {of this paper} is to propose strategies that exhibit enhanced performances in the long-term disease management. In particular, we focus on moving beyond the classical clinical approach of giving to the patient the maximal tolerated dose (MTD), which consists in setting ${D}_{\mathrm{MTD}}(t) = {D_{\max}}$ for every $t>0$ in \cref{eq:intro_soph_model}.
When sensitive and resistant clones compete for the available resources---as in solid cancers\footnote{{In hematological diseases, cancer cells typically lack strong spatial constraints and localized resource limitation, which are key drivers of ecological competition in solid tumors. However, the theoretical results in \cref{sec:theory} are general and also apply to hematological malignancies, provided the underlying dynamical model can be formulated as a control-affine system of ODEs.}}---, MTD has already been shown to be sub-optimal. Indeed, it leads to the extinction of the sensitive cells and to the emerging of a resistant population \cite{moff2012evolutionary}.  
Here, we aim to obtain a policy of drug dosage by solving an ensemble optimal control problem over {a fixed} time horizon $[0,T]${, where $T$ is pre-determined (see \cref{rmk:choice_T} for the choice in our simulations)}.
To do this, the key-steps are the definition of a compact set $\Theta$ where the {uncertain} parameters $\theta$ vary, and the introduction of a probability measure $\mu \in \mathcal{P}(\Theta)$ that describes our knowledge about the distribution of $\theta$. {For instance, one may consider repeated measurements of tumor burden in a cohort of patients over the course of therapy, where treatment interruptions (vacations) are employed. Model parameters for each patient could then be estimated via standard parameter identification procedures (see, e.g., \cite{Nelles2020}). The resulting collection of estimates $\theta_1,\ldots,\theta_k$ may be interpreted as independent samples from the underlying distribution, yielding the definition of an empirical probability measure $\mu\coloneqq \frac1k \sum_{j=1}^k \delta_{\theta_j}$,  which can be used to study new or existing treatment schedules through ensemble optimal control. Hence, in this framework}, the dosage strategy is computed through the minimization of a functional $\J\colon \U\to\R$ of the form
\begin{equation} \label{eq:intro_funct}
    \J({D}) \coloneqq  
    \int_\Theta  \int_0^T \ell\left( {N_D^\theta(t)} \right)  
    \de {t} \de\mu(\theta),
\end{equation}
where $\U\coloneqq \{{D} \in L^2([0,T],\R): 0\leq {D(t)}\leq {D_{\max}} \, \mbox{ for a.e. } {t}\}$, ${N_D^\theta(t)}$ is derived from \cref{eq:intro_soph_model}, and $\ell\colon \R\to\R$ is a proper penalization cost on the tumor size. {For the problem of minimizing \eqref{eq:intro_funct}, we provide a $\Gamma$-convergence result (see \cref{thm:exist_Gamma}), and we provide the explicit expression for the projected gradient field (\cref{prop:grad_rep,prop:g_descent}), moving beyond \cite{scag23gf,Scag23}, where the control variable was unconstrained.}
{Finally, we observe that,} roughly speaking, minimizing \eqref{eq:intro_funct} means that we look for a control that `does \emph{on average} a good job on the elements of the ensemble'.
In this regard, we point out that, even though a \emph{minimax} ensemble control formulation is a viable option (see \cite{Scag25}), in the application that we are considering here it is rather distant from the clinical practice. Indeed, the focus on the improvement of the least favorable cases usually comes at the expenses of a performance degradation on the most likely ones.

{In the second part of the paper}, we study a {simplified} system that describes the {dynamics} of the two sub\-populations of a tumor (sensitive and resistant) when the patient is undergoing a pharmacological treatment. {Following \cite{cunningham2018optimal,turnover2021}, in the present analysis we assume that the resistant sub\-population consists of descendants of pre-existing clones, and we do not include a mechanism of acquired (secondary) resistance.}
Namely, adopting the same notations as in \cite{turnover2021}, we render the competition for the limited resources through a Lotka-Volterra system, and we address the following ODE in $\R^2$:
\begin{equation}\label{eq:intro_model}
\begin{split}
    &\begin{cases}
        \dot S(t) = r_S\left( 1 - \frac{S(t)+R(t)}{K} \right)\left( 1 - 2d_D\frac{D(t)}{D_{\max}} \right)S(t) - d_T S(t), \\
    \dot R(t) = r_R\left( 1 - \frac{S(t)+R(t)}{K} \right) R(t) - d_T R(t),
    \end{cases}
    \\ 
    & \quad N(t) \coloneqq S(t) + R(t),
\end{split}
\end{equation}
where the time-dependent functions $t\mapsto S(t)$ and $t\mapsto R(t)$ denote, respectively, the amount of cells of the tumor population at the instant $t$ that are sensitive and resistant to a certain drug, while $t\mapsto N(t)$ accounts for the total population {(see also \cref{tab:simplified_parameters})}.
In \cref{eq:intro_model} the control is $t\mapsto D(t)$, which takes value in $[0,D_{\max}]$ and represents the drug concentration in the tumor environment. In our model, we assume that the fraction of cells killed by the drug is linearly proportional to the given dose. This classical hypothesis, formulated in \cite{NortonSimon77}, has been widely adopted in the literature (see, e.g., \cite{schattler2015optimal,turnover2021}).
\begin{table}[b]
\centering
\begin{tabular}{lll}
\toprule
\textbf{Parameter} & \textbf{Meaning} & \textbf{Units} \\
\midrule
$S(t)$ & Size of sensitive subpopulation & cells \\
$R(t)$ & Size of resistant subpopulation & cells \\
$N(t)$ & Total population $S(t)+R(t)$ & cells \\
$r_S$ & Proliferation rate of sensitive cells & time$^{-1}$ \\
$r_R$ & Proliferation rate of resistant cells & time$^{-1}$ \\
$K$ & Carrying capacity & cells \\
$d_T$ & Turnover (natural death) rate & time$^{-1}$ \\
$d_D$ & Drug sensitivity coefficient (sensitive cells) & dimensionless \\
$D(t)$ & Drug dosage (control) & dosage \\
$D_{\max}$ & Maximum drug dosage & dosage \\
\bottomrule
\end{tabular}
\caption{Model parameters and variables for the simplified two-population model in \cref{eq:intro_model}.}
\label{tab:simplified_parameters}
\end{table}
In \cref{sec:experiments} we take advantage of the parameters estimates reported in \cite{turnover2021} and we formulate an ensemble optimal control problem for \emph{prostate cancer} (non-metastatic and castration sensitive, m0CSPC) \emph{treated with androgen deprivation therapy (ADT)}. 
We compare the performances using the `time to progression' of the disease (TTP), and we adopt the Adaptive Therapy (AT) proposed in \cite{moff2009adaptive,moff2017zhang} and MTD as benchmark strategies.
When the integral cost $\ell$ is linear, the resulting optimal strategy turns out to behave closely to the MTD protocol. However, when $\ell$ has a hyperbolic profile, the computed policy suggests a delayed starting of the therapy, and on average it outperforms MTD and AT in terms of `time to progression' (see \cref{table:MTD_AT1,table:Sum_Hyp}). Moreover, this strategy is reminiscent of medical paradigm known as `active surveillance'   \cite{preisser2024european,karim2025early}. 
{While this feature may be of conceptual interest, a significant limitation of the computed policy is that, during the initial phase, it permits substantial tumor growth, which may limit its practical applicability.}
Finally, to amend this point, we propose a variant of the AT that we call `Off-On' Adaptive Therapy, which combines the `active surveillance' paradigm (i.e., delayed starting of the therapy) with adaptive periods of treatment vacation \cite{moff2009adaptive,moff2017zhang}, showing promising results (see \cref{table:Off-On}).

\section{Ensemble optimal control formulation: theoretical framework}
\label{sec:theory}

In this section, we perform a general analysis for the ensemble control problems formulated in \cref{eq:intro_soph_model}.
First, we show how to enclose \cref{eq:intro_soph_model} within the theoretical framework of ensembles of control-affine systems tackled in \cite{Scag23,Scag25}.
Then, we define the ensemble optimal control problem, and we show that it admits minimizers and that a $\Gamma$-convergence result holds.
Finally, we study the gradient of the functional involved in the optimization problem.

\subsection{Definition of the dynamics} \label{subsec:theoretical_dyn}
We recall that an ensemble of control-affine systems has the form $\dot X^\theta = \tilde F_0^\theta(X^\theta) + \tilde F_1^\theta(X^\theta)u$.
In order to identify the drift $\tilde F_0^\theta$ and the controlled vector field $\tilde F_1^\theta$, we set {$\theta \coloneqq \left( (r_i)_{i}, K, (d^I_i)_{i}, (d^T_i)_{i},  (A_{i,j})_{i,j} , (A_{i,j}^I)_{i,j}, (f_i)_i, N_0 \right) \in \Theta$}, and we rearrange \cref{eq:intro_soph_model} as follows:
\begin{equation} \label{eq:aff_ctrl_sys}
    \begin{split}
    &\frac{\mathrm{d}}{\mathrm{d}t} X_D^\theta(t)= \frac{\mathrm{d}}{\mathrm{d}t}
    \left( X_{D\ i}^\theta(t)
    \right)_{i=1,\ldots,n}
    = \tilde F_0^\theta\left(X_D^\theta(t)\right)
        +
        \tilde F_1^\theta\left(X_D^\theta(t)\right) D(t)
    \\
    &= \left( { \textstyle
    r_i \left( 1- \frac1K \sum_{j=1}^n X_j^\theta(t) \right)   X_i^\theta(t) - d_i^T X_i^\theta(t)  + \sum_{j=1}^n  A_{i,j} X_j^\theta(t) }
    \right)_{i=1,\ldots,n}\\
    &\quad + 
    \left( { \textstyle
    -r_i \left( 1- \frac1K \sum_{j=1}^n X_j^\theta(t) \right)   \frac{2d_i^I}{D_{\max}} X_i^\theta(t)   + \sum_{j=1}^n\frac{A^I_{i,j}}{D_{\max}} X_j^\theta(t) }
    \right)_{i=1,\ldots,n} D(t),
    \end{split}
\end{equation}
with the initial condition
\begin{equation} \label{eq:Cau_aff_ctrl}
    X^\theta_{D\ i}(0) = f_i N_0 \qquad i=1,\ldots,n,
\end{equation}
where {$N_0\in[0,K]$} denotes the initial size of the tumor.
{We briefly discuss below the role of the parameters appearing in \cref{eq:aff_ctrl_sys}:
\begin{itemize}
    \item[-]$r_i>0$ denotes the reproduction rate of the $i$-th sub\-population, for $i=1,\ldots,n$; 
    \item[-]$K, N_0$ denote, respectively, the carrying capacity of the system and the total initial population of cancer cells;
    \item[-]$d_i^I\geq 0$ denotes the drug sensitivity coefficient of the $i$-th  sub\-population, for $i=1,\ldots,n$. Without loss of generality, we can assume that $d_1^I\geq d_2^I\geq\ldots\geq d_n^I\geq0$;
    \item[-]$d_i^T\geq 0$ denotes the turnover rate of the $i$-th  sub\-population, for $i=1,\ldots,n$;
    \item[-]$f_i\geq 0$ denotes the fraction of the initial total population $N_0$ that belongs to the $i$-th sub\-population, for $i=1,\ldots,n$;
    \item[-]$A_{i,j}\geq 0$ denotes the `spontaneous' rate of the evolution (in the Darwinian sense) of the $j$-th sub\-population into the $i$-th sub\-population, for $i,j=1,\ldots,n$ and $i\neq j$;
    \item[-]$A_{i,j}^I\geq 0$ denotes the drug-induced rate of the evolution (in the Darwinian sense) of the $j$-th sub\-population into the $i$-th sub\-population, for $i,j=1,\ldots,n$ and $i\neq j$;
    \item[-]$A_{i,i}\leq 0$ and $A^I_{i,i}\leq 0$ denote, respectively, the `spontaneous' and drug-induced rate of the $i$-th sub\-population with which it evolves into some other species (`evolution rate').
\end{itemize}
We observe that the condition $d_i^I=0$ implies that the $i$-th population is completely insensitive to the drug.
We insist on the fact that, in order enforce that the inter-species evolution does not increase the total population, we require the following constraints on $(A_{i,j})_{i,j} , (A_{i,j}^I)_{i,j}$:
\begin{equation} \label{eq:pop_conservation}
     A_{i,i} + \sum_{j\neq i}A_{j,i}=0, \qquad  A_{i,i}^I + \sum_{j\neq i}A_{j,i}^I=0,
\end{equation}
together with the sign constraints $A_{i,i}\leq 0$ and $A^I_{i,i}\leq 0$, and $A_{i,j}\geq 0$ and $A_{i,j}^I\geq 0$ for $i\neq j$.
Moreover, we assume that $K\in [K_{\min},K_{\max}]$ and $N_0\in [0,K]$, and that $\sum_{i=1}^n f_i=1$.
We observe that
\begin{equation} \label{eq:simplex_start_theta}
    \big( X_{D\, i}(0)^\theta \big)_{i=1,\ldots,n} \in \Delta^\theta \coloneqq \left\{ X \in \R^n : X_i \geq 0 \ \forall i, \, 0\leq \sum_{i=1}^n X_i \leq K \right\}
\end{equation}
for every $\theta \in \Theta$, and we introduce \begin{equation} \label{eq:simplex_start}
     \Delta \coloneqq \left\{ X \in \R^n : X_i \geq 0 \ \forall i, \, 0\leq \sum_{i=1}^n X_i \leq K_{\max} \right\},
\end{equation} 
which satisfies $\Delta\supset \Delta^\theta$ for every $\theta\in\Theta$.}
Given $T>0$, we consider as the space of admissible controls
\begin{equation}\label{eq:def_adm_ctrls}
    \U\coloneqq \{ {D} \in L^2([0,T],\R): 0\leq {D(t)\leq D_{\max} } \, \mbox{ for a.e. } t \}.
\end{equation}
We notice that $\U$ is a convex subset of $L^2([0,T],\R)$, which we equip with the usual Hilbert space structure.
We first show that the simplex $\Delta$ defined above is invariant for the dynamics that we are considering.
\begin{lemma}  \label{lem:domain_invariance}
    Let us consider ${D}\in \U$ defined as in \cref{eq:def_adm_ctrls}, and the simplex $\Delta \in \R^{n}$ introduced in \cref{eq:simplex_start}.
    For every $\theta \in \Theta$, if ${X_D^\theta(0)}\in \Delta^{\theta}$, then ${X_D^\theta(t)} \in \Delta^{\theta}$ {(and, in particular, $X_D^\theta(t) \in \Delta$)} for every $t \in [0,T]$.
\end{lemma}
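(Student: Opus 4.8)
\subsection*{Proof proposal}

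The plan is to prove that $\Delta$ is forward invariant by establishing, along an arbitrary solution, the two a priori estimates ``$s^\theta_u,r^\theta_u\ge0$'' and ``$s^\theta_u+r^\theta_u\le1$'', and then to upgrade the resulting local-in-time statement to the whole of $[0,T]$ via a continuation argument based on the compactness of $\Delta$. Fix $\theta\in\Theta$ and $u\in\U$. Since the right-hand side of \cref{eq:aff_ctrl_sys} is polynomial (hence locally Lipschitz) in $x$ and, because $0\le u\le1$ a.e., bounded and measurable in $\tau$, the Carath\'eodory existence-and-uniqueness theorem produces a unique absolutely continuous solution $\tau\mapsto x^\theta_u(\tau)=\big(s^\theta_u(\tau),r^\theta_u(\tau)\big)^\top$ on a maximal interval $[0,T^\ast)\subseteq[0,T]$; on this interval $s^\theta_u$ and $r^\theta_u$ are continuous, so every coefficient appearing below belongs to $L^\infty_{\mathrm{loc}}([0,T^\ast))$.

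To obtain the non-negativity, I would read the first line of \cref{eq:intro_model_nondim} as the scalar linear equation $\dot s^\theta_u(\tau)=a(\tau)\,s^\theta_u(\tau)$ with $a(\tau):=\big(1-s^\theta_u(\tau)-r^\theta_u(\tau)\big)\big(1-\hat d_D u(\tau)\big)-\hat d_T$, so that the integrating-factor identity
\[
  s^\theta_u(\tau)=s^\theta_u(0)\,\exp\!\Big(\int_0^\tau a(\sigma)\,\de\sigma\Big)
\]
forces $s^\theta_u(\tau)\ge0$ whenever $s^\theta_u(0)\ge0$; the same argument applied to $\dot r^\theta_u(\tau)=\big[\hat r_R\big(1-s^\theta_u(\tau)-r^\theta_u(\tau)\big)-\hat d_T\big]r^\theta_u(\tau)$ yields $r^\theta_u(\tau)\ge0$. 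For the upper bound I would introduce $v(\tau):=1-s^\theta_u(\tau)-r^\theta_u(\tau)$; adding the two equations of \cref{eq:intro_model_nondim} and using $1-s^\theta_u-r^\theta_u=v$ turns the dynamics of the total mass into the scalar linear ODE
\[
  \dot v(\tau)=-\big(h(\tau)+\hat d_T\big)\,v(\tau)+\hat d_T,\qquad h(\tau):=\big(1-\hat d_D u(\tau)\big)s^\theta_u(\tau)+\hat r_R\,r^\theta_u(\tau),
\]
with $h\in L^\infty_{\mathrm{loc}}$. Multiplying by the strictly positive factor $\exp\!\big(\int_0^\tau(h+\hat d_T)\big)$, integrating, and using $\hat d_T\ge0$ gives $v(\tau)\ge v(0)\,\exp\!\big(-\int_0^\tau(h+\hat d_T)\big)\ge0$ as soon as $v(0)\ge0$, i.e.\ $s^\theta_u(0)+r^\theta_u(0)\le1$ implies $s^\theta_u(\tau)+r^\theta_u(\tau)\le1$. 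Putting the three inequalities together, $x^\theta_u(\tau)\in\Delta$ for every $\tau\in[0,T^\ast)$.

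Finally, since $\Delta$ is compact and the vector field in \cref{eq:aff_ctrl_sys} is bounded on $\Delta$ uniformly with respect to $u\in\U$ (again because $|u|\le1$), the trajectory stays bounded with bounded derivative on $[0,T^\ast)$, so the standard continuation principle gives $T^\ast=T$ and the solution extends continuously to $\tau=T$; this proves the lemma. The only point deserving care is the measurable time dependence: one must check that the integrating-factor identities and the comparison step are valid for absolutely continuous solutions of a Carath\'eodory system --- which they are, because $u\in L^\infty([0,T])$ and the remaining coefficients are continuous on compact subintervals --- and that global existence is \emph{deduced from} the a priori bound rather than presupposed, so as to avoid a circular argument. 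Alternatively, one could invoke Nagumo's viability theorem, after verifying subtangentiality of the velocity field along the three edges of $\Delta$: $\dot s^\theta_u=0$ on $\{s^\theta_u=0\}$, $\dot r^\theta_u=0$ on $\{r^\theta_u=0\}$, and $\dot s^\theta_u+\dot r^\theta_u=-\hat d_T\le0$ on $\{s^\theta_u+r^\theta_u=1\}$.
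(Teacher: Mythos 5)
Your proof is correct, but it takes a genuinely different route from the paper. The paper argues geometrically: it checks, edge by edge, that the controlled velocity field points inward (or is tangent) on the three boundary segments of $\Delta$ — the two coordinate segments $\{x_1=0\}$, $\{x_2=0\}$ and the diagonal $\{x_1+x_2=1\}$, where it computes $-(1,1)\cdot \dot x_u^\theta = \hat d_T\,(s_u^\theta+r_u^\theta)\ge 0$ — and then invokes, implicitly, a Nagumo-type viability principle. Your argument instead derives the invariance analytically: you read each component as a scalar linear ODE along the trajectory and use the integrating-factor representation to get $s_u^\theta\ge 0$ and $r_u^\theta\ge 0$, and you obtain the upper bound by showing that $v=1-s_u^\theta-r_u^\theta$ satisfies $\dot v=-(h+\hat d_T)v+\hat d_T$ with $\hat d_T\ge 0$, hence $v\ge 0$; the three estimates are logically independent (no sign condition on $h$ is needed), so there is no circularity. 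What your approach buys is twofold: it avoids appealing to a tangency/viability theorem for Carath\'eodory (measurable-in-time) dynamics, which the paper uses without justification, and it yields global existence on $[0,T]$ as a by-product of the a priori bound via the continuation principle, a point the paper leaves tacit. What the paper's approach buys is brevity and geometric transparency — and indeed your closing remark about Nagumo's theorem, with the subtangentiality checks $\dot s_u^\theta=0$ on $\{s_u^\theta=0\}$, $\dot r_u^\theta=0$ on $\{r_u^\theta=0\}$ and $\dot s_u^\theta+\dot r_u^\theta=-\hat d_T\le 0$ on $\{s_u^\theta+r_u^\theta=1\}$, is essentially the paper's proof in compressed form.
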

\begin{proof}
    See Appendix~\ref{sec:appendix_theory}.
\end{proof}

Unfortunately, the vector fields $\tilde F_0^\theta,\tilde F_1^\theta \colon \R^{n} \to \R^{n}$ involved in the differential model \eqref{eq:aff_ctrl_sys} do not satisfy the usual working assumptions, which require the fields to be globally Lipschitz continuous and to have a sub-linear growth (see, e.g., \cite[Hypothesis~2.1]{Scag25}).
However, \cref{lem:domain_invariance} enables us to circumvent this issue. Indeed, we can define the vector fields $F_0^\theta ,F_1^\theta\colon \R^{n} \to \R^{n}$ as follows:
\begin{equation} \label{eq:def_trunc_fields}
    F_0^\theta({X}) \coloneqq \rho({X})\tilde F_0^\theta ({X}) 
    \quad \mbox{and} \quad
    F_1^\theta ({X}) \coloneqq \rho({X}) \tilde F_1^\theta ({X}) 
\end{equation}
for every ${X} \in \R^{n}$ and for every $\theta\in \Theta$, where $\rho\colon \R^{n}\to [0,1]$ is a smooth cut-off function such that $\rho\equiv 1$ in $B_{{2K_{\max}}}(0)$ and $\mathrm{supp}{(\rho)}\subset B_{{3K_{\max}}}(0)$. 
On the one hand, considering $F_0^\theta ,F_1^\theta $ in place of $\tilde F_0^\theta ,\tilde F_1^\theta$ does not alter the trajectories that are of interest for our model, owing to \cref{lem:domain_invariance} and since $\Delta \subset B_{{2K_{\max}}}(0)$. 
On the other hand, $F_1^\theta , F_2^\theta$ fall perfectly within the classical theoretical framework of ensembles of control-affine systems. 

\subsection{Ensemble optimal control problem and related functional}
We shall propose a therapy schedule resulting {in the} (approximate) solution of an ensemble optimal control problem. 
To this end, we assume that we are given a probability measure $\mu\in\mathcal{P}(\Theta)$ on the space of the unknown parameters $\theta$ of the control system in \cref{eq:aff_ctrl_sys,eq:Cau_aff_ctrl}. Throughout the paper, the space $\Theta\subset \R^{m}$ is assumed to be compact.
Recalling the definition of $\U$ in \cref{eq:def_adm_ctrls}, we consider the ensemble optimal control problem associated to the functional $\J\colon \U\to \R$ defined as 
\begin{equation} \label{eq:def_ens_funct}
    \J({D}) \coloneqq  \int_\Theta  \int_0^T \ell\left({N_D}^\theta({t} )\right)  
    \de {t} \de\mu(\theta),
\end{equation}
where {$N_D^\theta(t)=\sum_{i=1}^nX_{D\ i}^\theta(t)$, with $X_{D\, 1}^\theta, \ldots, X_{D\, n}^\theta$} solutions of \cref{eq:intro_model_nondim} corresponding to the admissible control ${D}\in\U$ and to the parameters $\theta\in\Theta$, and where $\ell\colon \R \to\R$ is a cost function of class $C^2$.

\begin{remark} \label{rmk:struct_cost}
    The function that we integrate in \cref{eq:def_ens_funct} depends \emph{on the total cancer population ${N_D^\theta(t)}$ at every instant $t\in [0,T]$}. 
    This choice has a precise interpretation in the model. Indeed, in a real-world scenario, it is not realistic to have access separately to the sizes of the {$n$ different sub\-populations}.
\end{remark}

\begin{remark}
    The functional $\J$ introduced in \cref{eq:def_ens_funct} designs an \emph{averaged} ensemble optimal control problem, as we saturate the dependence on the parameter $\theta$ by averaging with respect to the probability measure $\mu$. 
    An alternative paradigm consists in optimizing with respect to the worst-case scenario. Namely, this would result in addressing the minimax problem induced by the functional
    \begin{equation} \label{eq:def_Funct_minimax}
        \J_{\max} \coloneqq 
        \max_{\theta \in \Theta} \left(
        \int_0^T \ell\left({N_D}^\theta({t} )\right)  
    \de {t} \right),
    \end{equation}
    recently studied in \cite{Scag25}.
    However, in the medical application that we are considering in the present paper, a minimax formulation can lead to over-conservative strategies.
    Indeed, it is more natural---and closer to the clinical practice---to design treatment strategies that are effective for the majority of the patients, rather than trying to achieve the best possible outcome on the worst-cases. This is due to the fact that the improvement on the least favorable systems of the ensemble usually comes at the expenses of a performance degradation on the most likely ones (see \cite[Section~6]{Scag25}).
    {Nevertheless, for comparison, in \cref{sec:experiments} we include numerical simulations addressing the minimax ensemble control problem.}
\end{remark}

Below, we show that the ensemble optimal control problem consisting in minimizing $\J$ admits solution. Moreover, using the tools of $\Gamma$-convergence, we establish a `stability result' for the minimizers of $\J$ with respect to perturbations in $\mu$. 
Given a sequence $(\mu_{k})_{{k}\geq1}\in\mathcal{P}(\Theta)$, we write $\mu_{k}\weak^* \mu$ as ${k}\to\infty$ to denote the weak-$*$ convergence of probability measures, i.e., 
\begin{equation*}
    \lim_{{k}\to\infty} \int_\Theta \phi(\theta)\de \mu_{k}(\theta) = 
    \int_\Theta \phi(\theta)\de \mu(\theta)
\end{equation*}
for every $\phi:\Theta\to\R$ bounded and continuous.

\begin{theorem} \label{thm:exist_Gamma}
    The functional $\J\colon \U\to \R$ defined in \cref{eq:def_ens_funct} admits minimizer in $\U$.\\
    Moreover, given a sequence $(\mu_{k})_{{k}\geq1}\in\mathcal{P}(\Theta)$ such that $\mu_{k}\weak^* \mu$ as ${k}\to\infty$, if we define 
    \begin{equation*}
        \J_{k}({D}) \coloneqq  \int_\Theta  \int_0^T \ell\left({ N_D^\theta(t )}\right)  
    \de t \de\mu_{k}(\theta)
    \end{equation*}
    for every ${D}\in\U$ and for every ${k}\geq1$, and if we denote with ${D}^\star_{k}$ a minimizer of $\J_{k}$ for every ${k}\geq1$, then we have $\min_\U \J = \lim_{{k}\to\infty}\J_{k}({D}^\star_{k})$, and every $L^2$-weak limiting point of $({D}^\star_{k})_{{k}\geq1}$ is a minimizer for $\J$. 
\end{theorem}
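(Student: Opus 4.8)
The plan is to prove existence via the direct method, and the $\Gamma$-convergence statement by verifying the standard liminf/recovery-sequence pair (here a trivial constant recovery sequence suffices, since the functionals share the same domain $\U$). The key technical ingredient is a continuity estimate for the map $u \mapsto n_u^\theta$ that is uniform in $\theta \in \Theta$. First I would observe that, thanks to \cref{lem:domain_invariance} and the truncation in \cref{eq:def_trunc_fields}, the vector fields $F_0^\theta, F_1^\theta$ are globally Lipschitz with constants bounded uniformly in $\theta$ (by compactness of $\Theta$ and smoothness of the data), so the trajectories $x_u^\theta$ exist on all of $[0,T]$ and stay in $\Delta$. From here a Gr\"onwall argument gives, for $u, v \in \U$,
\begin{equation*}
    \sup_{\tau \in [0,T]} \big| x_u^\theta(\tau) - x_v^\theta(\tau) \big| \leq C \int_0^T |u(\tau) - v(\tau)| \de\tau \leq C\sqrt{T}\, \| u - v \|_{L^2},
\end{equation*}
with $C$ independent of $\theta$; since $\ell \in C^2$ and $n_u^\theta(\tau)$ ranges in the compact set $[0,1]$, the inner integral $u \mapsto \int_0^T \ell(n_u^\theta(\tau))\de\tau$ is Lipschitz in $L^2$, again uniformly in $\theta$. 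This strong continuity, combined with the fact that $\U$ is convex and closed (hence weakly closed) and bounded in $L^2$, will also be upgraded to sequential weak lower semicontinuity of $\J$: the standard way is to show the inner cost is actually \emph{weakly continuous} along sequences in $\U$, which follows because a weakly convergent $u_N \weak u$ in $L^2$ produces (via compactness of the solution map — the right-hand sides are bounded, so $\dot x_{u_N}^\theta$ is bounded in $L^\infty$, and Ascoli gives uniform convergence of $x_{u_N}^\theta$ on $[0,T]$ after passing to the limit in the integral formulation, using $u \weak u$ to handle the affine-in-$u$ term).

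With these facts in hand, existence of a minimizer of $\J$ is immediate: take a minimizing sequence $(u_N)$ in $\U$, extract a weakly convergent subsequence $u_N \weak u^\star \in \U$ (bounded + weakly closed + reflexive), and pass to the limit inside $\J$ using the weak continuity of the inner integrand together with dominated convergence in $\theta$ (the integrand is bounded uniformly on the compact set $\Delta$, and $\mu$ is a probability measure), so $\J(u^\star) = \lim_N \J(u_N) = \inf_\U \J$. The same argument applies verbatim to each $\J_N$, giving existence of minimizers $u^\star_N$.

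For the $\Gamma$-convergence part, I would first establish the key uniformity: the function $\theta \mapsto \int_0^T \ell(n_u^\theta(\tau))\de\tau$ is continuous on $\Theta$ (by continuous dependence of the dynamics and Cauchy datum on $\theta$) and uniformly bounded in $u \in \U$ and $\theta \in \Theta$; moreover it is equi-continuous in $\theta$ uniformly over $u \in \U$ — this last point is what makes the convergence work. Then for any fixed $u \in \U$, since $\mu_N \weak^* \mu$ tests against bounded continuous functions on $\Theta$, one gets $\J_N(u) \to \J(u)$; with the equi-continuity in $\theta$ this convergence is in fact \emph{uniform} in $u \in \U$. Uniform convergence of the $\J_N$ to $\J$ on the common domain $\U$ trivially implies $\Gamma$-convergence (the liminf inequality and the recovery sequence both hold with the constant sequence), and a standard result in the theory of $\Gamma$-convergence then yields: $\min_\U \J = \lim_N \min_\U \J_N = \lim_N \J_N(u^\star_N)$, and any $L^2$-weak cluster point $\bar u$ of $(u^\star_N)$ is a minimizer of $\J$. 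To close this last point rigorously I would write $\J(\bar u) \leq \liminf_k \J(u^\star_{N_k})$ (weak lower semicontinuity of $\J$) $= \liminf_k \big[ \J_{N_k}(u^\star_{N_k}) + (\J - \J_{N_k})(u^\star_{N_k}) \big] = \lim_k \min_\U \J_{N_k} = \min_\U \J$, using uniform convergence to kill the discrepancy term.

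The main obstacle is the uniformity in $\theta$ — i.e., checking that all the constants in the Gr\"onwall estimate, the uniform bounds on the trajectories, and the modulus of continuity of $\theta \mapsto \int_0^T \ell(n_u^\theta)\de\tau$ can be taken independent of $u \in \U$, relying only on the compactness of $\Theta$ and the invariance of $\Delta$. Everything else is the standard direct-method/$\Gamma$-convergence machinery, but this uniformity is precisely what lets one interchange the $\theta$-integral (against a weak-$*$ convergent sequence $\mu_N$) with the limit in $u$.
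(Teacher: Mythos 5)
Your proposal is correct, and its skeleton coincides with the paper's: existence by the direct method in the weak $L^2$ topology using that $\U$ is convex, closed and bounded (hence weakly compact), and the second part via $\Gamma$-convergence of $\J_N$ to $\J$ on the common weakly compact domain, followed by the standard convergence of minima and of minimizers. The difference is that the paper outsources the two nontrivial ingredients to \cite{Scag23} (lower semicontinuity from Theorem~3.2 there, $\Gamma$-convergence from Theorem~4.6, convergence of minimizers from Corollary~4.8), whereas you reprove them in a self-contained way: (i) you obtain weak sequential \emph{continuity} (not merely lower semicontinuity) of $u\mapsto\J(u)$ from the control-affine structure, via uniform-in-$\theta$ Gr\"onwall bounds on the truncated fields, Ascoli compactness of the trajectories, and the strong-times-weak pairing $F_1^\theta(x_{u_N}^\theta)\,u_N$ in the integral formulation; (ii) you get $\Gamma$-convergence from \emph{uniform} convergence of $\J_N$ to $\J$ on $\U$, which rests on the uniform-in-$u$ equicontinuity of $\theta\mapsto\int_0^T\ell(n_u^\theta)\de\tau$ together with the fact that weak-$*$ convergence of probability measures is uniform on relatively compact subsets of $C(\Theta)$, and you then close the argument for cluster points by combining weak lower semicontinuity with the vanishing discrepancy $\sup_\U|\J_N-\J|$. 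Both routes are sound: the paper's is shorter and inherits generality from the abstract ensemble results of \cite{Scag23}, while yours makes explicit the uniformity in $\theta$ (in the same spirit as the appendix proof of \cref{lem:diff_traj}) and yields the slightly stronger intermediate facts (weak continuity of $\J$, uniform convergence of $\J_N$), at the price of having to verify the quantitative Gr\"onwall estimates in both $u$ and $\theta$, which indeed follow from the compactness of $\Theta$, the cut-off in \cref{eq:def_trunc_fields}, and \cref{lem:domain_invariance} exactly as you indicate.
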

\begin{proof}
    The proof of the first part follows using the direct method of the Calculus of Variations, considering the weak topology of $L^2$. More precisely, the lower semi-continuity of $\J$ is contained as a particular case of the proof of \cite[Theorem~3.2]{Scag23}. 
    For the weak coercivity, we recall that the space of admissible controls $\U$ is strongly closed and convex, hence it is weakly closed. Moreover, it is contained in the ball of $L^2$ centered at the origin and with radius $\sqrt{T}$. Therefore, the whole domain $\U$ of $\J$ is weakly compact, and $\J$ admits minimizer.
    The second part part of the statement descends from the fact that the sequence of functionals $(\J_{k})_{{k}\geq1}$ is $\Gamma$-convergent to the functional $\J$ with respect to the $L^2$-weak topology (see \cite[Theorem~4.6]{Scag23}). Hence, observing that the functionals $(\J_{k})_{{k}\geq1}$ are defined on the same weakly compact domain $\U$, the thesis follows by using \cite[Corollary~4.8]{Scag23}.
\end{proof}

\begin{remark} \label{rmk:G_conv}
    The second part of \cref{thm:exist_Gamma} plays a pivotal role in the applications. Indeed, each evaluation of the functional $\J$ requires the computation of the trajectories $t\mapsto {X_D}^\theta(t)$ solving \cref{eq:aff_ctrl_sys,eq:Cau_aff_ctrl} for every $\theta\in \mathrm{supp}(\mu)$. 
    When the support of the measure $\mu\in\mathcal{P}(\Theta)$ contains infinitely many elements, this turns out to be completely impractical.
    In the case we approximate $\mu$ with a discrete measure $\mu_{k}$ with finite support, \cref{thm:exist_Gamma} ensures that any minimizer of $\J_{k}$ is a good competitor as well for the original objective functional $\J$. Finally, we observe that the construction of discrete approximating measures is an active research field (see, e.g., \cite{merigot2021non,auricchio2024facility,auricchio2024k,auricchio2024extended})
\end{remark}

{ 
\begin{remark} \label{rmk:bang_bang}
    We recall that the Pontryagin Maximum Principle (PMP) provides necessary conditions for optimal controls (see, e.g., \cite[Section~6]{Bressan_Piccoli} for a general introduction).
    In the framework of averaged ensemble optimal control, the PMP has been studied in \cite{BK19}, and it provides insights on the structure of minimizers.
    More precisely, in our setting where the cost does not explicitly depend on the control input, from \cite[Theorem~3.3]{BK19} it follows that, for every $\mu \in \mathcal{P}(\Theta)$, any optimal control $u^\star\in \arg\min_\U \J$ satisfies the following condition:
    \begin{equation} \label{eq:PMP_ens}
        D^\star(t) \in \arg\max_{v\in [0,D_{\max}]} \left\{ \int_{\Theta} g^\theta_{D^\star}(t) \cdot F_1^\theta\big( X_{D^\star}^\theta(t) \big) v \de \mu(\theta) \right\}
    \end{equation}
    for a.e.~$t\in [0,T]$, where $t\mapsto g^\theta_{D^\star}(t)$ solves the \emph{backward adjoint equation}     
    \begin{equation*} 
        \begin{cases}
            \dot g_{D^\star}^\theta (t) = 
            -\ell' \left(N^\theta_{D^\star}(t) \right) (1,1) -g_{D^\star}^\theta(t) \left( \frac{\partial F_0^\theta (X_{D^\star}^\theta(t))}{\partial x} +
    \frac{\partial F_1^\theta (X_{D^\star}^\theta(t))}{\partial x} D^\star(t) \right),\\
        g_{D^\star}^\theta (T) = (0,\ldots,0),
        \end{cases}
    \end{equation*}
    for every $\theta\in\Theta$. We notice that, in the present setting, we only have normal extremals, as we do not impose any terminal-state constraint.
    From \cref{eq:PMP_ens}, we deduce the \emph{bang-bang} relation
    \begin{equation*}
        D^\star(t) = \begin{cases}
            D_{\max} & \mbox{if } \psi(t)>0, \\
            0 & \mbox{if } \psi(t)<0,
        \end{cases}
    \end{equation*}
    where $\psi(t) \coloneqq \int_{\Theta} g^\theta_{D^\star}(t) \cdot F_1^\theta\big( X_{D^\star}^\theta(t) \big) \de \mu(\theta)$. If the set $\{ t \in [0,T]: \psi(t) =0 \}$ has positive Lebesgue measure, the control $D^\star$ is said to be a \emph{singular arc}. For more details on singular arcs in averaged ensemble optimal control, we refer to \cite{aronna2025singular}.\\
    Finally, it is worth mentioning that, in the case of the minimax optimal control problem related to the minimization of \cref{eq:def_Funct_minimax}, the necessary optimaltiy condition for $D^\star_{\mathrm{mm}} \in \arg\min_\U (\J_{\max})$ looks rather similar to \cref{eq:PMP_ens}. Namely, we have that there exists $\bar \mu_{\max} \in \mathcal{P}(\Theta)$ such that
    \begin{equation} \label{eq:PMP_minimax}
        D^\star_{\mathrm{mm}} (t) \in \arg\max_{v\in [0,D_{\max}]} \left\{ \int_{\Theta} g^\theta_{D^\star_{\mathrm{mm}}}(t) \cdot F_1^\theta\big( X_{D^\star_{\mathrm{mm}}}^\theta(t) \big) v \de \bar \mu_{\max}(\theta) \right\}
    \end{equation}
    for a.e.~$t\in [0,T]$. We insist on the fact that $\bar \mu_{\max}$ is not given a priori, and it is in general hard to identify. For more details, see \cite{Vinter2005} and \cite[Remark~5.10]{Scag25}.
    In view of the analogy between \cref{eq:PMP_minimax,eq:PMP_ens}, we expect $D^\star_{\mathrm{mm}}$ to exhibit as well \emph{bang-bang} behavior.
\end{remark}
}

\subsection{Gradient of the objective functional}
In this part, we carry out the computations for the gradient of the functional $\J$ that we aim at minimizing. We report that the findings that we show below can be applied as well when we deal with an approximated functional $\J_{k}$ related to a measure $\mu_{k} \approx \mu$. 
{A main difficulty arises from the fact that admissible controls are constrained to the interval $[0,D_{\max}]$, so that the control space is convex but not linear; to address this, we develop here a careful argument.

For this reason,} it is convenient to introduce the functional $\J'\colon L^2([0,T], \R)\to \R$ defined according to \cref{eq:def_ens_funct} \emph{for every ${D} \in L^2([0,T], \R)$}. 
More precisely, we set
\begin{equation} \label{eq:def_ens_funct'}
    \J'({D})\coloneqq \int_\Theta  \int_0^T \ell\left({N_D^\theta(t)}\right)  
    \de {t} \de\mu(\theta),
\end{equation}
where {$N_D^\theta(t)=\sum_{i=1}^nX_{D\ i}^\theta(t)$, with $X_D^\theta = \left( X_{D\ 1}^\theta, \ldots, X_{D\ n}^\theta \right)$} solving
\begin{equation} \label{eq:cau_prob_trunc}
    \frac{\mathrm{d}}{\mathrm{d}{t}} {X_D^\theta}({t})
    =  F_0^\theta\left({X_D^\theta} ({t}) \right)
        +
         F_1^\theta\left({X_D^\theta} ({t}) \right) {D}({t}), \qquad     
         {X_{D\ i}}^\theta(0)  =
    {f_i N_0}
\end{equation}
for every $\theta\in\Theta$.
In other words, $\J'$ is the extension of $\J$ (which is defined only on $\U$) to the whole $L^2$, as illustrated in the next lemma. 
We observe that, in order to ensure that $\J'$ is well-defined for every ${D}\in L^2([0,T], \R)$, it is crucial to consider the truncated vector fields $F_0^\theta, F_1^\theta$ introduced in \cref{eq:def_trunc_fields} in place of $\tilde F_0^\theta, \tilde F_1^\theta$. 

\begin{lemma} \label{lem:extension}
    Let $\J\colon \U\to\R$ and $\J'\colon L^2([0,T],\R)\to \R$ be defined as in \cref{eq:def_ens_funct} and \cref{eq:def_ens_funct'}, respectively.
    If ${X_D^\theta}(0) = \left( {f_1N_0,\ldots, f_nN_0} \right)\in \Delta^{\theta}$ for every $\theta \in \Theta$, then $\J({D})=\J'({D})$ for every ${D} \in\U$.
\end{lemma}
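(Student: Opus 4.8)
The plan is to show that, for $u \in \U$, the truncated dynamics in \cref{eq:cau_prob_trunc} and the original dynamics in \cref{eq:aff_ctrl_sys} produce exactly the same trajectory, so that the integrands defining $\J$ and $\J'$ coincide pointwise in $(\tau,\theta)$, whence the double integrals agree. The only difference between the two functionals is that $\J$ is built from $\tilde F_0^\theta, \tilde F_1^\theta$ (implicitly, since on $\U$ restricted to $\Delta$-valued trajectories these are the fields of \cref{eq:intro_model_nondim}) while $\J'$ is built from the globally-defined truncations $F_0^\theta = \rho\, \tilde F_0^\theta$, $F_1^\theta = \rho\, \tilde F_1^\theta$; everything reduces to checking that the cut-off $\rho$ is inactive along the relevant trajectories.

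First I would fix $u \in \U$ and $\theta \in \Theta$, and let $x_u^\theta$ denote the solution of the original Cauchy problem \cref{eq:aff_ctrl_sys,eq:Cau_aff_ctrl}; this is well-defined on $[0,T]$ because, by \cref{lem:domain_invariance}, the trajectory starting in $\Delta$ stays in the compact set $\Delta$ for all $\tau \in [0,T]$, so no blow-up can occur and the (locally Lipschitz) vector fields $\tilde F_0^\theta, \tilde F_1^\theta$ give global existence and uniqueness on $[0,T]$. Next I would observe that $\Delta \subset B_2(0)$: indeed every $(x_1,x_2)^\top \in \Delta$ satisfies $x_1,x_2 \ge 0$ and $x_1 + x_2 \le 1$, hence $|x|^2 = x_1^2 + x_2^2 \le (x_1+x_2)^2 \le 1 < 4$. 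Since $\rho \equiv 1$ on $B_2(0)$ and $x_u^\theta(\tau) \in \Delta \subset B_2(0)$ for every $\tau$, we get $\rho(x_u^\theta(\tau)) = 1$, and therefore $F_0^\theta(x_u^\theta(\tau)) = \tilde F_0^\theta(x_u^\theta(\tau))$ and $F_1^\theta(x_u^\theta(\tau)) = \tilde F_1^\theta(x_u^\theta(\tau))$ along the trajectory. Consequently $x_u^\theta$ also solves \cref{eq:cau_prob_trunc}; by uniqueness of solutions for the (globally Lipschitz, sublinear) truncated system, it is \emph{the} solution appearing in the definition of $\J'$.

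With the trajectory identification in hand, $n_u^\theta(\tau) = s_u^\theta(\tau) + r_u^\theta(\tau)$ is the same function whether computed via $\J$ or via $\J'$, so $\ell(n_u^\theta(\tau))$ agrees for a.e.\ $\tau$ and every $\theta \in \Theta$; integrating in $\tau$ over $[0,T]$ and then in $\theta$ against $\mu$ yields $\J(u) = \J'(u)$. The measurability and integrability needed to write the iterated integral are not an issue here, since the integrand is continuous in $\theta$ (by continuous dependence of the ODE solution on parameters) and bounded on the compact set $\Delta$, and $T < \infty$, $\mu \in \mathcal P(\Theta)$; in any case these are exactly the hypotheses already used to make sense of $\J$ and $\J'$ in \cref{eq:def_ens_funct,eq:def_ens_funct'}.

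I do not expect a genuine obstacle in this lemma; it is essentially a bookkeeping statement. The one point that requires a line of care is the set inclusion $\Delta \subset B_2(0)$ together with the fact that $\rho \equiv 1$ there, which is precisely what makes the truncation invisible on the trajectories of interest — this is the content of the sentence already anticipated in the paragraph following \cref{lem:domain_invariance}. A secondary, purely technical point is invoking uniqueness for both the original system (to know $x_u^\theta$ is well-defined and confined to $\Delta$, via \cref{lem:domain_invariance}) and the truncated system (to identify it with the solution used in $\J'$); both are standard Carathéodory-type existence/uniqueness results since $u \in L^2 \subset L^1$ and the fields are Lipschitz in $x$ uniformly in $\tau,\theta$.
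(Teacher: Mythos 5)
Your proof is correct and follows essentially the same route as the paper: invoke \cref{lem:domain_invariance} to confine the trajectories to $\Delta$, note that the cut-off $\rho$ is inactive on $\Delta\subset B_2(0)$ so the truncated and original fields (hence, by uniqueness, the trajectories) coincide, and conclude that the integrands of $\J$ and $\J'$ agree. The extra details you supply (the explicit check $\Delta\subset B_2(0)$, Carath\'eodory well-posedness, measurability) are fine but not needed beyond what the paper's two-line argument already uses.
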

\begin{proof}
    If ${D}\in\U$ and ${X_D^\theta}(0) = \left( {f_1N_0,\ldots, f_nN_0} \right)\in \Delta^{\theta}$ for every $\theta \in \Theta$, then \cref{lem:domain_invariance} implies that ${X_D^\theta(t)} \in \Delta^{\theta} {\subset \Delta}$ for every ${t}\in[0,T]$.
    Recalling that, for every $\theta\in\Theta$, $F_0^\theta \equiv \tilde F_0^\theta$ and $F_1^\theta \equiv \tilde F_1^\theta$ on $B_{2{K_{\max}}}(0)\supset \Delta$ (see \cref{eq:def_trunc_fields}), it turns out that $\J({D})=\J'({D})$.
\end{proof}

In view of \cref{lem:extension}, we now address the computation of the differential of $\J'$. 
The fact that $\J'$ is defined on the whole $L^2$ simplifies the arguments.
Given ${D} \in L^2([0,T],\R)$, we compute the differential of the functional $\J'$, and we represent it as an element of $L^2([0,T],\R)$ via the Riesz's isometry. 
Taking advantage of the results obtained in \cite{scag23gf}, we first consider the mapping $L^2([0,T],\R) \ni {D}\mapsto {X_D^\theta}(t) \in \R^n$ when ${t} \in [0,T]$ and $\theta\in\Theta$ are fixed. 

\begin{lemma} \label{lem:diff_traj}
    Let us consider ${D,\delta} \in L^2([0,T],\R)$ and $\e\in (0,1]$. Let us denote with ${X_D^\theta}, {X_{D+\e \delta}^\theta}$ the solutions of \cref{eq:cau_prob_trunc} corresponding, respectively, to the controls ${D, D+ \e\delta}$.
    Then, we have that
    \begin{equation}\label{eq:1st_ord_traj}
        \sup_{{t}\in [0,T]} \sup_{\theta\in \Theta}
        | {X_{D+\e \delta}}^\theta ({t}) - {X_D^\theta} ({t}) - \e {Y_{D, \delta}^\theta} ({t})
        | = o(\e) \quad \mbox{as } \e\to 0,
    \end{equation}
    where, for every ${t}\in[0,T]$ and for every $\theta\in\Theta$, we set
    \begin{equation} \label{eq:def_var_traj}
       {Y_{D, \delta}^\theta} ({t}) \coloneqq 
        M_{D}^\theta({t}) 
    \int_0^{t} 
    \left(M_{D}^{\theta}(\sigma) \right)^{-1}
    F_1^\theta \left( {X_D^\theta}(\sigma) \right) {\delta}(\sigma) 
    \de \sigma,
    \end{equation}
    and ${t}\mapsto M_{D}^\theta({t})\in \R^{{n}\times {n}}$ solves
    \begin{equation*}
        \begin{cases}
            \dot M_{D}^\theta(\sigma) =
            \left( \frac{\partial F_0^\theta ({X_D^\theta}(\sigma))}{\partial x} +
    \frac{\partial F_1^\theta ({X_D^\theta}(\sigma))}{\partial x} {D}(\sigma) \right) M_{D}^\theta(\sigma), \\
    M_{D}^\theta(0) = \mathrm{Id}.
        \end{cases}
    \end{equation*}
\end{lemma}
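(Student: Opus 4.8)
The plan is to realize $y_{u,v}^\theta$ as the solution of the variational (linearized) equation along $x_u^\theta$ and then control the remainder by a Gronwall argument that is uniform in $\theta\in\Theta$ and $\tau\in[0,T]$. First I would fix $\theta\in\Theta$ and write $z_\e^\theta(\tau) \coloneqq x_{u+\e v}^\theta(\tau) - x_u^\theta(\tau)$. Using the integral form of \eqref{eq:cau_prob_trunc} for both controls and subtracting, one gets
\begin{equation*}
    z_\e^\theta(\tau) = \int_0^\tau \Big[ F_0^\theta\big(x_u^\theta + z_\e^\theta\big) - F_0^\theta\big(x_u^\theta\big) \Big](\sigma)\de\sigma
    + \int_0^\tau \Big[ F_1^\theta\big(x_u^\theta + z_\e^\theta\big)(u+\e v) - F_1^\theta\big(x_u^\theta\big)u \Big](\sigma)\de\sigma .
\end{equation*}
Since the truncated fields $F_0^\theta, F_1^\theta$ are globally Lipschitz with constant independent of $\theta$ (they are smooth with compact support, and $\Theta$ is compact, so all relevant sup-norms are finite), a first Gronwall estimate on the above identity yields $\sup_{\tau,\theta}|z_\e^\theta(\tau)| \le C\e\|v\|_{L^2}$, i.e.\ the map is Lipschitz in $u$ uniformly in $(\tau,\theta)$. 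This crude bound is the input needed to linearize.

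Next I would Taylor-expand the fields to first order: writing $F_i^\theta(x_u^\theta + z_\e^\theta) = F_i^\theta(x_u^\theta) + \partial_x F_i^\theta(x_u^\theta) z_\e^\theta + R_i^\theta$ with $|R_i^\theta(\sigma)| \le \omega(|z_\e^\theta(\sigma)|)|z_\e^\theta(\sigma)|$ for a modulus $\omega$ coming from the (uniform) continuity of $\partial_x F_i^\theta$ on the compact support, substitute into the identity for $z_\e^\theta$, and compare with the integral form of the variational equation defining $\e y_{u,v}^\theta$, namely $\dot{(\e y)} = \big(\partial_x F_0^\theta + \partial_x F_1^\theta\, u\big)(\e y) + \e F_1^\theta(x_u^\theta) v$, whose solution is exactly the variation-of-constants formula \eqref{eq:def_var_traj} with the state-transition matrix $M_u^\theta$. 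Setting $w_\e^\theta \coloneqq z_\e^\theta - \e y_{u,v}^\theta$, one finds that $w_\e^\theta$ satisfies an integral inequality of the form
\begin{equation*}
    |w_\e^\theta(\tau)| \le \int_0^\tau \big(A + B|u(\sigma)| + B\e|v(\sigma)|\big)|w_\e^\theta(\sigma)|\de\sigma + \e\int_0^\tau |F_1^\theta(x_u^\theta)\,v|(\sigma)\de\sigma\cdot\mathbf{1}_{\{\e\}} + \text{(remainder terms)},
\end{equation*}
where the genuinely small contributions are $\int_0^\tau |R_i^\theta(\sigma)|\,(\dots)\de\sigma$, bounded by $\omega(C\e\|v\|_{L^2})\cdot C\e\|v\|_{L^2}$, plus a term $\e\int_0^\tau |\partial_x F_1^\theta(x_u^\theta)|\,|y_{u,v}^\theta|\,|\e v|\de\sigma = O(\e^2)$ coming from the cross term. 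Applying Gronwall's inequality (the integrable weight $A + B|u| + B\e|v|$ is fine, its $L^1$ norm over $[0,T]$ being bounded by $AT + B(\|u\|_{L^2}+\|v\|_{L^2})\sqrt T$ uniformly in $\e\le 1$), one concludes $\sup_{\tau}|w_\e^\theta(\tau)| \le C' \big(\e\,\omega(C\e\|v\|_{L^2})\|v\|_{L^2} + \e^2\big) = o(\e)$, and the constants $C, C', A, B$ and the modulus $\omega$ are all independent of $\theta$, so the supremum over $\theta\in\Theta$ in \eqref{eq:1st_ord_traj} is controlled as well.

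The main obstacle is bookkeeping the $\theta$-uniformity of all constants simultaneously with the fact that $u\in L^2$ only (not $L^\infty$): the state-transition matrix $M_u^\theta$ and its inverse must be shown to be uniformly bounded on $[0,T]\times\Theta$, which again follows from a Gronwall estimate using $\|\partial_x F_0^\theta\|_\infty + \|\partial_x F_1^\theta\|_\infty\|u\|_{L^1([0,T])} < \infty$, together with the joint continuity in $\theta$ of the data (so that $\theta\mapsto x_u^\theta$, hence $\theta\mapsto M_u^\theta$, is continuous on the compact $\Theta$ and the relevant quantities attain finite maxima). Most of this is already packaged in \cite{scag23gf}, so in the write-up I would invoke those estimates and emphasize only the linearization/Gronwall step above; indeed the statement is essentially \cite[Lemma~3.3 or analogous]{scag23gf} transported to the present ensemble, the only new point being that the truncation in \eqref{eq:def_trunc_fields} makes the hypotheses of that reference applicable here.
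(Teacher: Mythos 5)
Your proposal is correct and follows essentially the same route as the paper: for fixed $\theta$ the statement is exactly the first-order expansion of \cite[Proposition~2.4]{scag23gf} (variational equation plus Gronwall), and the only genuinely new content is that the Lipschitz constant, the modulus of continuity of the first-order Taylor remainder of $F_i^\theta$, and hence all Gronwall constants can be chosen uniformly in $\theta$, thanks to the smooth truncation in \eqref{eq:def_trunc_fields} and the compactness of $\Theta$ --- precisely the uniformity you track. The paper simply cites the reference for the linearization step and verifies these two uniform estimates, whereas you unpack the Gronwall argument explicitly; the stray factor in your displayed integral inequality is a typographical slip rather than a gap, since your prose accounting of the remainder and cross terms is correct.
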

\begin{proof}
    See {Appendix}~\ref{sec:appendix_theory}.
\end{proof}

We are now ready for providing a representation of the differential of the functional $\J'$. {Here, we use $(\R^n)^*$ to denote the space of row-vectors, i.e., dual space of $\R^n$.}

\begin{proposition} \label{prop:grad_rep}
    Let ${\J'\colon L^2([0,T],\R)\to \R}$ be defined as in \cref{eq:def_ens_funct'}, and let us consider ${D,\delta}\in L^2([0,T],\R)$ and $\e\in [0,1]$.
    Then, the functional $\J'$ is Gateaux-differentiable at ${D}$, and we have that
    \begin{equation} \label{eq:grad_rep}
        \lim_{\e \to 0} \frac{\J'({D}+\e {\delta})- \J'({D})}{\e} =
        \int_0^T  
         \left( \int_\Theta
         {g_D^\theta}(\sigma) 
         F_1^\theta \left( {X_D^\theta}(\sigma) \right)
         \de \mu(\theta)
         \right)
         {\delta}(\sigma)
         \de \sigma,
    \end{equation}
    where ${g_D^\theta}\colon [0,T]\to (\R^{n})^*$ is an absolutely continuous curve that solves
    \begin{equation} \label{eq:grad_back_ode}
        \begin{cases}
             {\dot g_D^\theta} (\sigma) = 
            -\ell' \left({N_D^\theta}(\sigma) \right) (1,{\ldots,}1) -{g_D^\theta}(\sigma) \left( \frac{\partial F_0^\theta ({X_D^\theta}(\sigma))}{\partial x} +
    \frac{\partial F_1^\theta ({X_D^\theta}(\sigma))}{\partial x} {D}(\sigma) \right),\\
        {g_D^\theta} (T) = (0,{\ldots,}0),
        \end{cases}
    \end{equation}
    for every $\theta\in\Theta$.
\end{proposition}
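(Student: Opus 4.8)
\textbf{Proof plan for \cref{prop:grad_rep}.}

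The plan is to differentiate $\J'(u+\e v)$ in $\e$ by combining \cref{lem:diff_traj} with the smoothness of $\ell$, and then to rewrite the resulting expression using an adjoint (costate) equation so as to isolate the factor multiplying $v(\sigma)$. First I would write
\begin{equation*}
    \J'(u+\e v) - \J'(u) = \int_\Theta \int_0^T \Big( \ell\big(n_{u+\e v}^\theta(\tau)\big) - \ell\big(n_u^\theta(\tau)\big) \Big) \de\tau \de\mu(\theta),
\end{equation*}
and, since $n_u^\theta = (1,1)\, x_u^\theta$, apply a first-order Taylor expansion of $\ell$ together with the uniform (in $\tau$ and $\theta$) estimate \eqref{eq:1st_ord_traj}. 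This yields, after dividing by $\e$ and sending $\e\to 0$,
\begin{equation*}
    \lim_{\e\to0} \frac{\J'(u+\e v) - \J'(u)}{\e} = \int_\Theta \int_0^T \ell'\big(n_u^\theta(\tau)\big)\, (1,1)\, y_{u,v}^\theta(\tau) \de\tau \de\mu(\theta),
\end{equation*}
where passing the limit inside both integrals is justified by uniform boundedness of the difference quotients (the trajectories and the variation $y_{u,v}^\theta$ stay in a fixed compact set by \cref{lem:domain_invariance} and the truncation, and $\ell\in C^2$, $v\in L^2$).

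Next I would substitute the explicit formula \eqref{eq:def_var_traj} for $y_{u,v}^\theta$ and interchange the order of the $\tau$- and $\sigma$-integrations (Fubini, using that the $\sigma$-integral only runs over $[0,\tau]$). This gives a kernel in which $v(\sigma)$ is multiplied by
\begin{equation*}
    \int_\sigma^T \ell'\big(n_u^\theta(\tau)\big)\,(1,1)\, M_u^\theta(\tau) \de\tau \,\big(M_u^\theta(\sigma)\big)^{-1} F_1^\theta\big(x_u^\theta(\sigma)\big).
\end{equation*}
Defining $g_u^\theta(\sigma) \coloneqq -\int_\sigma^T \ell'\big(n_u^\theta(\tau)\big)\,(1,1)\, M_u^\theta(\tau)\de\tau \,\big(M_u^\theta(\sigma)\big)^{-1}$ as a row vector, one checks directly that $g_u^\theta(T) = (0,0)$ and, by differentiating the product (using $\frac{\de}{\de\sigma}(M_u^\theta(\sigma))^{-1} = -(M_u^\theta(\sigma))^{-1}\dot M_u^\theta(\sigma)(M_u^\theta(\sigma))^{-1}$ and the ODE satisfied by $M_u^\theta$), that $g_u^\theta$ solves the backward linear ODE \eqref{eq:grad_back_ode}; absolute continuity is immediate from this representation. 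Substituting back produces exactly \eqref{eq:grad_rep}, and the fact that the right-hand side is linear and continuous in $v\in L^2$ (the inner $\mu$-integral defines an $L^\infty$, hence $L^2$, function of $\sigma$ by compactness of $\Theta$ and continuity in $\theta$) confirms Gateaux-differentiability.

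The main obstacle is the justification of passing the limit $\e\to 0$ under the double integral $\int_\Theta\int_0^T$: one must control the Taylor remainder of $\ell$ uniformly, which requires knowing that $n_{u+\e v}^\theta(\tau)$ and $n_u^\theta(\tau)$ remain in a common compact set for all small $\e$, all $\tau$, and all $\theta$ — this is where the truncated fields $F_0^\theta, F_1^\theta$ and the boundedness of $M_u^\theta$ (continuous in $\tau,\theta$ on a compact set) are essential, together with \eqref{eq:1st_ord_traj} from \cref{lem:diff_traj}. The remaining steps (Fubini, the ODE verification for $g_u^\theta$) are routine linear-ODE manipulations. I would also note that, once Gateaux-differentiability is established in this form, the dominated-convergence argument in fact shows the difference quotient converges for $\e\to 0$ along arbitrary sequences, so the stated limit is genuine; the representation \eqref{eq:grad_rep} then identifies the $L^2$-gradient of $\J'$ via the Riesz isometry as $\sigma\mapsto \big(\int_\Theta g_u^\theta(\sigma)F_1^\theta(x_u^\theta(\sigma))\de\mu(\theta)\big)^\top$.
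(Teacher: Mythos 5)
Your plan follows essentially the same route as the paper's proof: a uniform first-order expansion of $\ell\big(n_{u+\e v}^\theta\big)$ via \cref{lem:diff_traj} (with the truncated fields guaranteeing that all trajectories stay in a fixed compact set), substitution of the variation formula \eqref{eq:def_var_traj}, Fubini to move the $\sigma$-integration outside, definition of the costate through the fundamental matrix $M_u^\theta$, and verification of the backward ODE using $\frac{\mathrm{d}}{\mathrm{d}\sigma}\big(M_u^\theta(\sigma)\big)^{-1}=-\big(M_u^\theta(\sigma)\big)^{-1}\dot M_u^\theta(\sigma)\big(M_u^\theta(\sigma)\big)^{-1}$. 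The justification of the Gateaux-differentiability and of the Riesz representation is also handled as in the paper.

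There is, however, a sign slip that makes your verification step internally inconsistent as stated. You define
\begin{equation*}
    g_u^\theta(\sigma) \coloneqq -\int_\sigma^T \ell'\big(n_u^\theta(\tau)\big)\,(1,1)\, M_u^\theta(\tau)\de\tau \,\big(M_u^\theta(\sigma)\big)^{-1},
\end{equation*}
but with this minus sign a direct differentiation yields $\dot g_u^\theta(\sigma) = +\ell'\big(n_u^\theta(\sigma)\big)(1,1) - g_u^\theta(\sigma)\big( \frac{\partial F_0^\theta (x_u^\theta(\sigma))}{\partial x} + \frac{\partial F_1^\theta (x_u^\theta(\sigma))}{\partial x} u(\sigma) \big)$, which is not \eqref{eq:grad_back_ode}, and substituting this $g_u^\theta$ back into the Fubini-rearranged first-order term produces the \emph{negative} of the right-hand side of \eqref{eq:grad_rep}. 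The correct choice (as in the paper) is the same expression without the leading minus sign: then the kernel multiplying $v(\sigma)$ is exactly $\int_\Theta g_u^\theta(\sigma) F_1^\theta\big(x_u^\theta(\sigma)\big)\de\mu(\theta)$ and the backward ODE \eqref{eq:grad_back_ode}, including the source term $-\ell'\big(n_u^\theta(\sigma)\big)(1,1)$, follows from the same computation. With that single correction, your argument coincides with the paper's proof.
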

\begin{proof}
    Recalling that by construction the vector fields $F_0^\theta, F_1^\theta$ vanishes outside the set $B_{3K_{\max}}(0)\subset \R^{n}$ for every $\theta\in\Theta$, we deduce that there exists $\kappa>0$ such that ${N_{D+\e \delta}^\theta} ({t}) = (1,{\ldots,}1)\cdot {X_{D+\e \delta}^\theta} \in [-\kappa,\kappa]$ for every $\theta\in\Theta$ and for every $\e\in [0,1]$.
    Recalling that $\ell\colon \R \to \R_+$ is of class $C^2$, owing to \cref{lem:diff_traj} we obtain that
    \begin{equation*}
        \sup_{{t}\in [0,T]}\sup_{\theta\in\Theta} \left|\ell \left({N_{D+\e \delta}^\theta}({t})\right) 
        - \ell \left({N_D^\theta}({t})\right) - \e \, \ell' \left({N_D^\theta}({t}) \right) (1,{\ldots,}1)\cdot {Y_{D,\delta}^\theta}({t}) \right| = o(\e) \quad \mbox{as } \e\to 0.
    \end{equation*}
    The last identity yields
    \begin{equation} \label{eq:Gateaux_prov_1}
        \J'({D}+\e {\delta}) = \J'({D}) + 
        \e \int_0^T \int_\Theta \ell' \left({N_D^\theta}({t}) \right) (1,{\ldots,}1)\cdot {Y_{D,\delta}^\theta}({t})
        \de \mu(\theta) \de {t}
        + o(\e) \quad 
    \end{equation}
    as $\e\to 0$.
    We now focus on the first order term in \cref{eq:Gateaux_prov_1}, and, taking advantage of \cref{eq:def_var_traj}, we compute:
    \begin{equation} \label{eq:comput_grad}
        \begin{split}
        &  \int_0^T  \int_\Theta  \ell' \left({N_D^\theta}({t}) \right) (1,{\ldots,}1) \cdot {Y_{D,\delta}^\theta}({t})
        \de \mu(\theta) \de {t}
         \\
         &  =
        \int_0^T \int_\Theta \ell' \left({N_D^\theta}({t}) \right) (1,{\ldots,}1)\cdot 
        M_{D}^\theta({t}) 
        \left[ \int_0^{t} 
    \left(M_{D}^{\theta}(\sigma) \right)^{-1}
    F_1^\theta \left( {X_D^\theta}(\sigma) \right) {\delta}(\sigma) 
    \de \sigma \right]
        \de \mu(\theta) \de {t} \\
        &  =
        \int_0^T \left[
        \int_\Theta 
         \int_\sigma^T 
        \ell' \left({N_D^\theta}({t}) \right) (1,{\ldots,}1)\cdot 
        M_{D}^\theta({t}) 
        \de {t} 
    \left(M_{D}^{\theta}(\sigma) \right)^{-1}
    F_1^\theta \left( {X_D^\theta}(\sigma) \right)  \de \mu(\theta) \right]
    {\delta}(\sigma) 
    \de \sigma,
        \end{split}
    \end{equation}
    where we used Fubini's Theorem in the second identity.
    For every $\theta\in\Theta$ and for every $\sigma\in [0,T]$, we define 
    \begin{equation} \label{eq:def_rep_grad_1}
        {g_D^\theta}(\sigma) \coloneqq \int_\sigma^T 
        \ell' \left({N_D^\theta}({t}) \right) (1,{\ldots,}1)\cdot 
        M_{D}^\theta({t}) 
        \de {t} 
    \left(M_{D}^{\theta}(\sigma) \right)^{-1},
    \end{equation}
    and, by combining \cref{eq:Gateaux_prov_1,eq:comput_grad,eq:def_rep_grad_1}, we deduce that \cref{eq:grad_rep} holds.
    We are left to show that ${g_D^\theta} \colon [0,T]\to (\R^{n})^*$ solves \cref{eq:grad_back_ode}. When $\sigma=T$, we directly read from \cref{eq:def_rep_grad_1} that the terminal condition ${g_D^\theta}(T)=(0,{\ldots,}0)$ is satisfied.
    By differentiating with respect to $\sigma$ the righ-hand side of \cref{eq:def_rep_grad_1}, we get 
    \begin{equation} \label{eq:der_rep_grad}
        \dot {g_D^\theta}(\sigma) =
        -\ell' \left({N_D^\theta}(\sigma) \right) (1,{\ldots,}1) + 
        \int_\sigma^T 
        \ell' \left({N_D^\theta}({t}) \right) (1,{\ldots,}1)\cdot 
        M_{D}^\theta({t}) 
        \de {t}
        \frac{\mathrm{d}}{\mathrm{d}\sigma}
        \left(M_{D}^{\theta}(\sigma) \right)^{-1}.
    \end{equation}
    Leveraging  a classical result (see, e.g., \cite[Theorem~2.2.3]{Bressan_Piccoli}), we obtain that
    \begin{equation*}
        \frac{\mathrm{d}}{\mathrm{d}\sigma}
        \left(M_{D}^{\theta}(\sigma) \right)^{-1} = -\left(M_{D}^{\theta}(\sigma) \right)^{-1}  \left( \frac{\partial F_0^\theta ({X_D^\theta}(\sigma))}{\partial x} +
    \frac{\partial F_1^\theta ({X_D^\theta}(\sigma))}{\partial x} {D}(\sigma) \right),
    \end{equation*}
    and finally, combining the last identity with \cref{eq:def_rep_grad_1,eq:der_rep_grad}, we finish the proof.
\end{proof}

\cref{prop:grad_rep} yields the following result for the differential of the functional $\J$ related to the model of interest.

\begin{corollary} \label{cor:rep_grad_J}
    Let $\J\colon \U\to \R$ be defined as in \cref{eq:def_ens_funct}, and let us consider ${D,D'}\in \U$ and $\e\in [0,1]$.
    Then, we have
    \begin{equation} \label{eq:grad_rep_J}
        \lim_{\e \to 0} \frac{\J({D+\e (D'-D)})- \J({D})}{\e} =
        \int_0^T  
         \left( \int_\Theta
        {g_D^\theta}(\sigma) 
         F_1^\theta \left( {X_D^\theta}(\sigma) \right)
         \de \mu(\theta)
         \right)
         {(D'(\sigma)-D(\sigma))}
         \de \sigma,
    \end{equation}
    where ${g_D^\theta} \colon [0,T]\to (\R^{n})^*$ solves \cref{eq:grad_back_ode} for every $\theta\in\Theta$.
\end{corollary}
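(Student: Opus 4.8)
The plan is to obtain \cref{cor:rep_grad_J} as a direct consequence of \cref{prop:grad_rep} combined with the extension \cref{lem:extension}, after a short preliminary observation on the geometry of the admissible set $\U$. First I would record that $\U$ is convex: for $u,v\in\U$ and $\e\in[0,1]$ the affine combination $u+\e(v-u)=(1-\e)u+\e v$ still takes values in $[0,1]$ for a.e.\ $\tau$, hence it belongs to $\U$. This is what makes the incremental quotient on the left-hand side of \cref{eq:grad_rep_J} meaningful, since $\J$ is defined only on $\U$. Moreover, since $r_0^\theta=\hat f_0 n_0$ and $s_0^\theta=(1-\hat f_0)n_0$ with $n_0\in[0,1]$ and $\hat f_0\in[0,1]$, we have $x_u^\theta(0)=(s_0^\theta,r_0^\theta)^\top\in\Delta$ for every $\theta\in\Theta$ (this is \cref{eq:simplex_start}), so the hypothesis of \cref{lem:extension} is satisfied.

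Next I would invoke \cref{lem:extension} to replace $\J$ by its $L^2$-extension $\J'$: for every $w\in\U$ one has $\J(w)=\J'(w)$, and in particular
\begin{equation*}
    \J\big(u+\e(v-u)\big)=\J'\big(u+\e(v-u)\big), \qquad \J(u)=\J'(u),
\end{equation*}
for every $\e\in[0,1]$. Therefore the quotient appearing in \cref{eq:grad_rep_J} coincides with the corresponding quotient formed with $\J'$ in place of $\J$.

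At this point I would apply \cref{prop:grad_rep} with the perturbation direction taken to be $v-u\in L^2([0,T],\R)$. The Gateaux-differentiability of $\J'$ at $u$ asserted there yields
\begin{equation*}
    \lim_{\e\to0}\frac{\J'\big(u+\e(v-u)\big)-\J'(u)}{\e}
    =\int_0^T\left(\int_\Theta g_u^\theta(\sigma)\,F_1^\theta\big(x_u^\theta(\sigma)\big)\de\mu(\theta)\right)\big(v(\sigma)-u(\sigma)\big)\de\sigma,
\end{equation*}
with $g_u^\theta\colon[0,T]\to(\R^2)^*$ solving the backward ODE \cref{eq:grad_back_ode}. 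Combining this with the two identities from the previous paragraph gives exactly \cref{eq:grad_rep_J}, which completes the argument.

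I do not expect any genuine obstacle here: the analytic content of the corollary is already packaged in \cref{prop:grad_rep,lem:extension}, and the only points that deserve (minimal) attention are the convexity of $\U$, which keeps $u+\e(v-u)$ admissible so that both sides of \cref{eq:grad_rep_J} are well defined, and the fact that the limit is effectively one-sided ($\e\to0^+$, since $\e$ ranges in $[0,1]$ and must be nonzero in the quotient), which is already accommodated by the statement of \cref{prop:grad_rep}.
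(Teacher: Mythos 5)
Your argument is correct and matches the paper's own proof of \cref{cor:rep_grad_J}: convexity of $\U$ ensures $u+\e(v-u)\in\U$, \cref{lem:extension} identifies $\J$ with $\J'$ on these points, and \cref{prop:grad_rep} applied in the direction $v-u$ gives \cref{eq:grad_rep_J}. Your extra remarks (the Cauchy data lying in $\Delta$, the one-sidedness of the limit) are sound but not a departure from the paper's route.
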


\begin{proof}
    Recalling that $\U\subset L^2([0,T],\R)$ is convex, we have that ${D+\e(D'-D)}\in\U$ for every $\e\in[0,1]$. Hence, \cref{lem:extension} guarantees that $\J({D+\e(D'-D)})=\J'({D+\e(D'-D)})$ for every $\e\in[0,1]$. Therefore, we conclude using \cref{prop:grad_rep}.
\end{proof}

From \cref{cor:rep_grad_J} we readily get the representation of the differential of $\J$. Namely, for every ${D}\in\U$, owing to \cref{eq:grad_rep_J} we can represent through Riesz's isometry the differential $\nabla_{D}\J$ as
\begin{equation} \label{eq:grad_J}
    \nabla_{D} \J ({t}) \coloneqq 
    \int_\Theta
         {g_D^\theta}({t}) 
         F_1^\theta \left( {X_D^\theta}({t}) \right)
         \de \mu(\theta)
\end{equation}
for every ${t} \in [0,T]$.
We conclude this part by observing the structure of a gradient-based algorithm for the minimization of $\J$.
We first investigate the tangent cone to the set of admissible control $\U$ at a point ${D}\in\U$.
In doing this, we follow the notion provided in \cite[Definition~1.8]{mordukhovich2006variational1}, i.e.,
\begin{equation}\label{eq:def_tan_cone}
    T({D},\U) \coloneqq \limsup_{\e \to 0}\frac{\U- {D}}{\e},
\end{equation}
where the $\limsup$ in \cref{eq:def_tan_cone} is understood as the collection of the $L^2$-strong limiting points of sequences $(v_{\e_n})_n$ such that $v_{\e_n}\in \frac{\U- {D}}{\e_n}$ and $\e_n\to 0$ as $n\to\infty$.

\begin{lemma} \label{lem:tangent_domain}
    Let $\U \subset L^2([0,T],\R)$ be defined as in \cref{eq:def_adm_ctrls}. 
    Then, for every ${D}\in\U$, we have that 
    \begin{equation} \label{eq:char_tangent}
    T({D},\U) = \left\{ 
    v \in L^2([0,T],\R) :   v({t}) \leq 0 \,\,\, \mathrm{ if }\,\,\,  {D}({t})={D_{\max}},\,
    v(\tau) \geq 0 \,\,\, \mathrm{ if }\,\,\, {D}({t})=0
    \right\}    .
    \end{equation}
\end{lemma}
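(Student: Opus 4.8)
The claim is a pointwise characterization of the tangent cone (in the sense of the outer/Bouligand limit of difference quotients) to the box constraint set $\U=\{u\in L^2:0\le u\le1\text{ a.e.}\}$. My approach is a two-sided inclusion. I would first prove the ``$\subseteq$'' inclusion (every tangent vector satisfies the sign conditions), and then the ``$\supseteq$'' inclusion (every $v$ satisfying the sign conditions is tangent), the latter by exhibiting an explicit admissible approximating sequence.

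For ``$\subseteq$'': let $v\in T(u,\U)$, so there are $\e_n\to0$ and $v_n\to v$ strongly in $L^2$ with $u+\e_n v_n\in\U$, i.e. $0\le u(\tau)+\e_n v_n(\tau)\le1$ for a.e.\ $\tau$. Passing to a subsequence I may assume $v_n\to v$ pointwise a.e. On the set $\{u=1\}$ the upper bound forces $v_n(\tau)\le0$ for a.e.\ such $\tau$, hence $v(\tau)=\lim v_n(\tau)\le0$; symmetrically on $\{u=0\}$ the lower bound gives $v_n(\tau)\ge0$, so $v(\tau)\ge0$. (Strictly, one should be slightly careful because the exceptional null sets may depend on $n$, but a countable union of null sets is null, so this is fine.) This shows $v$ lies in the right-hand set of \cref{eq:char_tangent}.

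For ``$\supseteq$'': fix $v$ in the right-hand side of \cref{eq:char_tangent}. For $\e>0$ define the truncation
\begin{equation*}
    v_\e(\tau)\coloneqq \max\!\Big\{-\tfrac{u(\tau)}{\e},\,\min\!\big\{v(\tau),\,\tfrac{1-u(\tau)}{\e}\big\}\Big\},
\end{equation*}
so that by construction $0\le u(\tau)+\e v_\e(\tau)\le1$ a.e., i.e.\ $u+\e v_\e\in\U$, hence $v_\e\in(\U-u)/\e$. It remains to check $v_\e\to v$ strongly in $L^2$ as $\e\to0$. Pointwise a.e.\ convergence $v_\e(\tau)\to v(\tau)$ holds: on $\{0<u(\tau)<1\}$ the clipping bounds $-u/\e$ and $(1-u)/\e$ both diverge, so $v_\e(\tau)=v(\tau)$ for $\e$ small; on $\{u(\tau)=1\}$ we have $v(\tau)\le0$, so $\min\{v(\tau),(1-u)/\e\}=\min\{v(\tau),0\}=v(\tau)$ and the outer $\max$ with $-1/\e$ leaves it unchanged for $\e$ small; the case $\{u(\tau)=0\}$ is symmetric. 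Moreover $|v_\e(\tau)|\le|v(\tau)|\in L^2$ for all $\e$ (each clip only moves the value toward $0$ when $v$ has the ``wrong'' sign, and toward $v$ otherwise — in all cases $|v_\e|\le|v|$). Dominated convergence then gives $v_\e\to v$ in $L^2$, so $v\in T(u,\U)$.

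**Main obstacle.** The argument is essentially routine; the only genuinely delicate point is the measure-theoretic bookkeeping — that the truncation $v_\e$ is measurable (it is, being built from $\max/\min$ of measurable functions), that the null sets on which the a.e.\ inequalities fail can be amalgamated across the countable sequence $\e_n$, and the uniform domination $|v_\e|\le|v|$ that licenses dominated convergence. I would also remark that one should really verify $v_\e\in L^2$ (clear from $|v_\e|\le|v|$) so that the difference quotients live in the correct space. Beyond that there is no hard analytic content; the characterization is the standard description of the tangent cone to an $L^2$ box, and the explicit clipping family makes the nontrivial inclusion constructive rather than relying on abstract convex-analysis machinery.
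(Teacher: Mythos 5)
Your proof is correct, and while the ``$\subseteq$'' half coincides with the paper's argument (pass to an a.e.\ convergent subsequence of the difference quotients and read off the sign constraints on $\{u=0\}$ and $\{u=1\}$, amalgamating the countably many null sets), your ``$\supseteq$'' half takes a genuinely different route. The paper proceeds by approximation: it truncates $v$ at a level $M$, restricts to the set $A^\delta = A_0\cup A_1\cup A_b^{\bar k}$ where $u$ stays at distance at least $1/\bar k$ from the active bounds (off $\{u=0\}\cup\{u=1\}$), and shows that for each $\delta>0$ the single control $u'_{\bar\e}=u+\bar\e\, v_M\mathds{1}_{A^\delta}$ with $\bar\e=1/(\bar k M)$ is admissible, so that $\tfrac{1}{\bar\e}(u'_{\bar\e}-u)$ lies within $\delta$ of $v$ in $L^2$; membership in $T(u,\U)$ then follows by letting $\delta\to0$ (with $\bar\e\to0$). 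You instead exhibit, for \emph{every} $\e>0$, the exact clipped element $v_\e=\max\{-u/\e,\min\{v,(1-u)/\e\}\}\in(\U-u)/\e$ (well defined since $-u/\e\le(1-u)/\e$), and get strong convergence $v_\e\to v$ from pointwise convergence plus the domination $|v_\e|\le|v|$. Your construction is more direct and slightly stronger in output: it shows $v$ is reached along \emph{any} sequence $\e_n\to0$, i.e.\ the set in \cref{eq:char_tangent} is in fact the full (adjacent/derivable) tangent cone and not merely contained in the outer limit, which is consistent with the convexity of $\U$; the price is the small amount of bookkeeping needed for the a.e.\ domination estimate, which you supply. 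The paper's two-parameter truncation avoids that estimate but only produces a $\delta$-approximation for each $\bar\e$, so it implicitly relies on a diagonal choice $\delta\to0$, $\bar\e\to0$ at the end. Both arguments share the (harmless) convention that the pointwise conditions in \cref{eq:char_tangent} are understood a.e.\ for a fixed representative of $u$.
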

\begin{proof}
    See {Appendix}~\ref{sec:appendix_theory}.
\end{proof}

We introduce the mappings $\Pi_\U \colon L^2([0,T],\R) \to \U$ and $\Pi_{T({D},\U)} \colon L^2([0,T],\R) \to T({D},\U)$ defined as the projections onto the closed convex set $\U$ and onto the closed convex cone $T({D},\U)$, respectively.
With a direct computation, it is possible to show that
\begin{equation} \label{eq:proj_convex}
    \Pi_\U[v]({t}) = \max\big(\min\big( v({t}), {D_{\max}}\big), 0\big)
\end{equation}
and that
\begin{equation} \label{eq:proj_cone}
    \Pi_{T({D},\U)}[v]({t}) =   \max\big( v({t}), 0 \big) \mathds{1}_{\{{D}=0\}} + 
    \min\big( v({t}), 0 \big) \mathds{1}_{\{{D}={D_{\max}}\}}
    + v({t})\mathds{1}_{\{0<{D}<{D_{\max}}\}}
\end{equation}
for every ${t}\in[0,T]$, for every ${D}\in \U$ and for every $v\in L^2([0,T],\R)$.

\begin{proposition} \label{prop:g_descent}
    Let $\J\colon \U\to \R$ be defined as in \cref{eq:def_ens_funct}, and let us consider $\eta>0$. Then, for every ${D}\in \U$ we have that
    \begin{equation*}
        \Pi_\U\left[ {D} - \eta \nabla_{D} \J \right] =
        \Pi_\U\left[ {D} + \eta \Pi_{T({D},\U)} [-\nabla_{D} \J] \right].
    \end{equation*}
\end{proposition}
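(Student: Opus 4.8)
The identity to be proven is a pointwise/functional statement about projections onto a convex set $\U$ and onto the tangent cone $T(u,\U)$. My plan is to verify it almost everywhere in $\tau$ by splitting $[0,T]$ according to the three cases appearing in the description of $T(u,\U)$ from \cref{lem:tangent_domain} and the explicit formulas \cref{eq:proj_convex,eq:proj_cone}: the sets $\{u=0\}$, $\{u=1\}$, and $\{0<u<1\}$. Writing $w \coloneqq \nabla_u\J$ and fixing $\tau$, I would compute both sides of the claimed equality as functions of the real numbers $u(\tau)$ and $w(\tau)$, using that $\Pi_\U[v](\tau)=\max(\min(v(\tau),1),0)$ is applied coordinatewise (i.e.\ $\tau$ by $\tau$), and similarly for $\Pi_{T(u,\U)}$.

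First I would treat the interior case $0<u(\tau)<1$. There $\Pi_{T(u,\U)}[-w](\tau) = -w(\tau)$, so the right-hand side reads $\Pi_\U[u+\eta(-w)](\tau) = \Pi_\U[u-\eta w](\tau)$, which is exactly the left-hand side; no clamping analysis is even needed here. Next, the case $u(\tau)=0$: the left-hand side is $\max(\min(-\eta w(\tau),1),0) = \max(-\eta w(\tau),0)$ (since $-\eta w(\tau)$ is compared against $1$, and $\max(-\eta w(\tau),0)\le 1$ requires checking, but $\min$ with $1$ then $\max$ with $0$ gives $\max(0,\min(-\eta w(\tau),1))$, which equals $\max(-\eta w(\tau),0)$ only when $-\eta w(\tau)\le 1$ — so I must be slightly careful and keep the $\min(\cdot,1)$ in general). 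On the right-hand side, $\Pi_{T(u,\U)}[-w](\tau) = \max(-w(\tau),0)$, so the argument of $\Pi_\U$ is $0+\eta\max(-w(\tau),0) = \max(-\eta w(\tau),0)\ge 0$, and applying $\Pi_\U$ clamps it to $[0,1]$, giving $\min(\max(-\eta w(\tau),0),1)$. Comparing: LHS is $\max(\min(-\eta w(\tau),1),0)$ and RHS is $\min(\max(-\eta w(\tau),0),1)$; since clamping to $[0,1]$ is order-independent (both equal the projection of $-\eta w(\tau)$ onto $[0,1]$), these coincide. The case $u(\tau)=1$ is symmetric, using $\Pi_{T(u,\U)}[-w](\tau)=\min(-w(\tau),0)$ and the argument $1+\eta\min(-w(\tau),0) = 1-\eta\max(w(\tau),0)\le 1$.

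The only genuine point requiring care — which I would flag as the ``main obstacle,'' though it is mild — is that the one-dimensional clamp $t\mapsto \max(\min(t,1),0)$ is the metric projection of $\R$ onto $[0,1]$, and composing it with a shift that has already enforced one of the two one-sided constraints does not change the outcome because the projection onto a box is the coordinatewise composition of the two monotone one-sided projections, in either order. Concretely, for any real $t$, $\max(\min(t,1),0)=\min(\max(t,0),1)$, and when $a\ge 0$ we have $\max(\min(a,1),0)=\min(a,1)$, while when $a\le 1$ we have $\max(\min(a,1),0)=\max(a,0)$. Feeding these elementary facts into the three cases closes the proof. Finally I would note that since both $\Pi_\U$ and $\Pi_{T(u,\U)}$ act pointwise (the formulas \cref{eq:proj_convex,eq:proj_cone} are pointwise in $\tau$), the almost-everywhere pointwise identity upgrades immediately to the claimed identity of elements of $L^2([0,T],\R)$, i.e.\ of functions in $\U$.
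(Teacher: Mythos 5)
Your proposal is correct and follows essentially the same route as the paper's proof: a pointwise case analysis on $\{u(\tau)=0\}$, $\{u(\tau)=1\}$, and $\{0<u(\tau)<1\}$ using the explicit formulas \cref{eq:proj_convex,eq:proj_cone}, with the only cosmetic difference that you resolve the boundary cases via the clamp identity $\max(\min(t,1),0)=\min(\max(t,0),1)$ rather than splitting on the sign of $-\nabla_u\J(\tau)$ as the paper does. Both arguments are complete and equivalent.
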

\begin{proof}
    To ease the notations, let us define $v_1 \coloneqq {D} - \eta \nabla_{D} \J$ and $v_2 \coloneqq {D} + \eta \Pi_{T({D},\U)} [-\nabla_{D} \J]$.
    From \cref{eq:proj_convex,eq:proj_cone}, we notice that $-\nabla_{D} \J$ can differ from $\Pi_{T({D},\U)} [-\nabla_{D} \J]$ only on those points ${t}\in[0,T]$ such that either ${D}({t})=0$ or ${D}({t})={D_{\max}}$. \\
    Let us assume that for some ${t}\in[0,T]$ we have ${D}({t})=0$. On the one hand, if $-\nabla_{D} \J({t})>0$, then $-\nabla_{D} \J({t}) = \Pi_{T({D},\U)} [-\nabla_{D} \J]$, yielding $v_1({t})=v_2({t})$ and $\Pi_\U[v_1]({t})= \Pi_\U[v_2]({t})$.
    On the other hand, if $-\nabla_{D} \J({t})<0$, then $\Pi_{T({D},\U)} [-\nabla_{D} \J]=0$, so that $v_2({t})=0$, while $v_1({t})<0$. However, using \cref{eq:proj_convex}, we deduce that $\Pi_\U[v_1]({t})= 0 = \Pi_\U[v_2]({t})$. \\
    The argument for ${t}$ such that ${D}({t})={D_{\max}}$ is analogous.
\end{proof}

\begin{remark} \label{rmk:g_descent}
The previous result suggests an implementable approach for the numerical minimization of the functional $\J$.
Given a current guess ${D}_n\in \U$, we perform the update ${D}_{n+1}\coloneqq \Pi_{\U}[{D}_n - \eta\nabla_{{D}_n}\J]$. 
\cref{prop:g_descent} ensures that we do not need to take care of projecting $-\nabla_{{D}_n}\J$ onto the tangent cone to $\U$ at ${D}_n$. {The convergence properties of the proposed projected gradient scheme (in particular, convergence to stationary points) follow from classical results on projected gradient methods in Hilbert spaces; see \cite[Chapter~2]{Bertsekas}.}
\end{remark}

%%%%%%%%%%%%%%%%%

\section{Ensemble optimal control in action: numerical experiments}
\label{sec:experiments}

In this section, we first describe the framework where we set our numerical experiments\footnote{{The codes used for the simulations are available at the following link: \url{https://doi.org/10.5281/zenodo.19652678}.}}, i.e., the androgen deprivation therapy in prostate cancer. 
Then, we present the benchmark strategies and we discuss the results obtained through the resolution of ensemble optimal control problems.
Finally, taking advantage of the insights provided by this viewpoint, we introduce a variant of the adaptive therapy proposed in \cite{moff2009adaptive,moff2017zhang}.

{In the remainder of this paper, we work within the two-population model mentioned in the Introduction (see \cref{eq:intro_model} and \cref{tab:simplified_parameters}).}
{As usual, in order to get a non-dimensional system of equations, we perform in \cref{eq:intro_model} the time re\-parametrization and the function transformations as follows:
\begin{equation} \label{eq:non_dim_transform}
\begin{split} 
 &\tau \coloneqq r_S t, \quad s(\tau) \coloneqq \frac{S(r_S t)}{K}, \quad r(\tau) \coloneqq \frac{R(r_S t)}{K},\\
  &n(\tau) \coloneqq \frac{N(r_S t)}{K}, \quad u(\tau) \coloneqq \frac{D(r_S t)}{D_{\max}}.
\end{split}
\end{equation}
We observe that in \cref{eq:intro_model} $r_S$ denotes the reproduction rate of the sensitive cells, and it is employed for the change in the time-scale. Consequently, when using the time variable $\tau$, the sensitive cells have unitary reproduction rate.
%Moreover, the constant $K>0$ denotes the carrying capacity of the system, i.e., the maximal tumor population that the surrounding environment can support.
Finally, we rescale as well the time-independent parameters appearing in \cref{eq:intro_model}:
\begin{equation}\label{eq:scaled_param}
    \hat d_D \coloneqq 2 d_D, \quad \hat d_T \coloneqq \frac{d_T}{r_S}, \quad \hat r_R \coloneqq \frac{r_R}{r_S}.
\end{equation}
We collect in $\theta \coloneqq (\hat d_D, \hat d_T, \hat r_R, \hat f_0)$ the non-negative constants that parametrize the ensemble of systems described below, and that affect both the dynamics and the Cauchy datum.
Therefore, we obtain the following non-dimensional {equations}:
\begin{equation} \label{eq:intro_model_nondim}
\begin{split}
     &\begin{cases}
        \dot s^\theta_u(\tau) = \big( 1 - s^\theta_u(\tau) - r^\theta_u(\tau) \big) \left( 1 - \hat d_D u(\tau) \right)s^\theta_u(\tau) - \hat d_T s^\theta_u(\tau), \\
     \dot r^\theta_u(\tau) = \hat r_R \big( 1 - s^\theta_u(\tau) - r^\theta_u(\tau) \big) r^\theta_u(\tau) - \hat d_T r^\theta_u(\tau), 
    \end{cases} \\
     &\quad n^\theta_u(\tau) \coloneqq s^\theta_u(\tau) + r^\theta_u(\tau),
\end{split}
\end{equation}
where $\tau\mapsto u(\tau) \in [0,1]$ is the control which we act on the system with, and $\tau \mapsto s^\theta_u(\tau)$, $\tau \mapsto r^\theta_u(\tau)$ are the corresponding trajectories (see \cref{tab:nondim_parameters}). We denote with $n_0\in (0,1)$ the initial tumor size (i.e., $n^\theta(0)=n_0$), and we set the Cauchy datum for \eqref{eq:intro_model_nondim} to be
\begin{equation} \label{eq:intro_Cauchy}
    r^\theta_u(0) = r_0^\theta = \hat f_0n_0, \qquad s^\theta_u(0) = s_0^\theta =(1-\hat f_0)n_0, 
\end{equation}
with $\hat f_0 \in [0,1]$.
\begin{table}[ht]
\centering
\begin{tabular}{ll}
\toprule
\textbf{Parameter} & \textbf{Meaning} \\
\midrule
$s(\tau)$ & Normalized size of the sensitive subpopulation \\
$r(\tau)$ & Normalized size of the resistant subpopulation \\
$n(\tau)$ & Total normalized population $s(\tau)+r(\tau)$ \\
$\tau$ & Rescaled (dimensionless) time \\
$u(\tau)$ & Normalized drug dosage (control) \\
$\hat d_D$ & Rescaled drug sensitivity coefficient \\
$\hat d_T$ & Rescaled turnover (death) rate \\
$\hat r_R$ & Relative proliferation rate of resistant cells (with respect to sensitive cells) \\
$\hat f_0$ & Initial fraction of resistant cells \\
$n_0$ & Initial total normalized population \\
\bottomrule
\end{tabular}
\caption{Non-dimensional variables and parameters of the simplified model.}
\label{tab:nondim_parameters}
\end{table}
%We recall that $\hat d_D$ denotes the drug-induced killing rate, and that $\hat d_T$ is the normalized cell turnover rate, i.e., the natural death rate of tumor cells. 
%Finally, $\hat r_R$ represents the normalized proliferation rate of resistant cells, and $\hat f_0$ accounts for the portion of initial population that is resistant to the drug. 
We insist on the fact that \cref{eq:intro_model_nondim} does not include a mechanism of secondary (or `acquired') resistance, and the population $r^\theta(\tau)$ at $\tau>0$ is made of descendants of the resistant clones $r_0^\theta$ present from the very beginning. This hypothesis is frequently assumed (see, e.g., \cite{cunningham2018optimal,turnover2021}) and is consistent with the bio-medical evidences concerning the heterogeneity of cancer cells \cite{swanton2012heterogeneity}. }

\subsection{Framework and parameters setting} \label{subsec:param_setting}

In this part, we consider the parameters appearing in \cref{eq:intro_model_nondim,eq:intro_Cauchy} and define the ranges where they vary.
In doing that, we take advantage of the estimates provided in \cite{turnover2021} for \emph{prostate cancer treated with androgen deprivation therapy (ADT)}.
We recall that the analysis in \cite{turnover2021} relied on the results of the trial described in \cite{bruchovsky2006final}. There, a cohort of patients with biochemical recurrence of the tumor (non-metastatic and castration sensitive, m0CSPC) after radical radiotherapy treatment underwent intermittent ADT.

\subsubsection*{Parameters not affected by uncertainty} 
We begin by listing the parameters that in our experiments {are assumed to be unaffected by uncertainty}. {This restriction is motivated by the fact that some parameters are taken from well-established estimates available in the literature. Furthermore, limiting the number of uncertain parameters allows us to study the variability in treatment response, while keeping the ensemble optimal control problem computationally tractable.}

The first crucial quantity to be estimated is the proliferation rate for sensitive cells $r_S$, which is employed to perform the time-normalization of the original system \eqref{eq:intro_model} into \eqref{eq:intro_model_nondim}.
In \cite{turnover2021}, the authors set $r_S=0.027\, \mathrm{day}^{-1}$, adopted from a previous estimate derived in \cite{moff2017zhang}.
Then, the effectiveness of the therapy is encoded in the value of the non-dimensional constant $\hat d_D$, which is set $\hat d_D=1.5$ in \cite{turnover2021} (see also \cite{moff2019multidrug} for the original estimate). 
We recall that the normalized dimension of the tumor is expressed using $n=n(\tau)$, which ranges in $[0,1]$. Here, $n=0$ means that the tumor cells (sensitive and resistant) are entirely extinct, while $n=1$ implies that the tumor size has reached the system's carrying capacity.
The initial size of the tumor $n(0)=n_0$ in our experiments takes values in $\{0.25, 0.50, 0.75\}\ni n_0$. We insist that $n_0$ is not affected by uncertainty, and we shall run the simulations separately for the different values of $n_0$.
Finally, in \cite{turnover2021} the turnover rate $\hat d_T$ is proposed to range in the interval $\hat d_T\in [0,0.5]$. In the present paper, we consider the most adverse scenario, i.e., we always set $\hat d_T=0$. 

\subsubsection*{Parameters affected by uncertainty}
We now describe the parameters that are affected by uncertainty.
The first quantity is the normalized proliferation rate of the resistant tumor cells encoded in $\hat r_R$. In \cite{turnover2021} this constant is suggested to vary in the interval $[0.5,1]\ni \hat r_R$. Here, $\hat r_R= 0.5$ means that the `cost' of having the drug resistance results in the proliferation rate being $50\%$ smaller than that of sensitive cells. Conversely, if $\hat r_R= 1$, the resistant cells have no disadvantage towards the sensitive ones regarding reproduction rate. \\
In the ODE model considered here, there is no mechanism of resistance acquirement during the {dynamics}. In other words, in our framework, the population of resistant cells $r(\tau)$ at an instant $\tau>0$ consists of clones of an initial resistant subpopulation that is assumed to be present from the very beginning.
This hypothesis is consistent with the heterogeneity and polyclonal nature of solid cancers, which in the clinical practice is particularly realistic in the case of advanced or metastatic disease (see e.g.~\cite{swanton2012heterogeneity}).
We assume that the initial fraction $\hat f_0$ of resistant cells ranges in $[0.002,0.1]\ni \hat f_0$, meaning that the ratio $\hat f_0\coloneqq r(0)/(r(0)+s(0))$ varies between $0.2\%$ (most favourable scenario) and $10\%$ (worst-case scenario).
In future work, we plan to incorporate a feature that accounts for the secondary resistance mechanism into the model.

\subsubsection*{Space of parameters and ensemble measure} We are in a position to provide a formal definition for the space of parameters $\Theta$ where $\theta\coloneqq (\hat d_D, \hat d_T, \hat r_R, \hat f_0)$ takes values.
According to what was said above, in our experiments we consider
\begin{equation*}
    \Theta = \{1.5\} \times \{0\} \times [0.5,1] \times [0.002,0.1].
\end{equation*}
In the current preliminary work, we assume the uncertain parameters $\hat r_R, \hat f_0$ to be \emph{independent} and \emph{uniformly distributed} in the respective intervals where they range.
This hypothesis implies that the probability measure $\mu\in \mathcal{P}(\Theta)$ used for defining the ensemble optimal control problem has the form:
\begin{equation} \label{eq:mu_simul}
    \mu \coloneqq \delta_{1.5} \otimes \delta_0 \otimes U(0.5,1) \otimes U(0.002,0.1),
\end{equation}
where $\delta_{z}$ denotes the Dirac delta centered at $z\in\R$, and $U(a,b)$ denotes the uniform probability distribution over the interval $[a,b]$.

\begin{remark} \label{rmk:indep_meas}
    The theoretical machinery developed in \cref{sec:theory} is completely general and does not require the independence of the different parameters that appear in the model.
    If, in the future, evidence of correlations between such parameters is observed, it will suffice to adapt accordingly the choice of the measure $\mu$.
    Similarly, if we wished to consider random fluctuations in $\hat d_D$ or $\hat d_T$, we could right away include these features in $\Theta$ and $\mu$.
\end{remark}

As observed in \cref{rmk:G_conv}, any ensemble optimal control problem related to the measure $\mu$ defined as in \cref{eq:mu_simul} requires the \emph{simultaneous} solution of infinitely many ODEs, just for evaluating the objective functional. 
Since this is unfeasible for simulations, we pursue the approach provided by \cref{thm:exist_Gamma}. Namely, we approximate $\mu$ with a discrete probability measure $\mu_k$ with finite support, and we address the minimization of the functional related to $\mu_k$.
{We emphasize that the construction of $\mu_k$ detailed below should be understood as a proof of concept. In a more realistic scenario, one can consider repeated measurements of tumor burden in a cohort of $k$ patients and estimate the model parameters using techniques from system identification (see, e.g., \cite{Nelles2020}). The model parameters for each patient, $\theta_1,\ldots,\theta_k$, are then interpreted as independent samples from the measure $\mu$, and the resulting empirical probability measure $
\mu_k \coloneqq \frac1k \sum_{i=1}^k \delta_{\theta_i}$
is used in the definition of the functional $\J_k$. 
A detailed description of such parameter estimation procedures is beyond the scope of the present manuscript and is left for future development. Our focus here is on the mathematical formulation and analysis of the ensemble optimal control problem, rather than on the practical implementation of parameter estimation.}\\
In our specific setting, we discretized the interval $[0.5,1]$ where $\hat r_R$ varies into $25$ equi\-spaced nodes with distance $0.02$. Hence, we set $U(0.5,1)\approx \frac{1}{25}\sum_{j=1}^{25} \delta_{\hat r_R^j}$, with $\hat r_R^j\in \{0.5, 0.52,\ldots,1\}$.
Regarding the parameter $\hat f_0$, we divided the interval $[0.002,1]$ into $49$ nodes with a constant step equal to $0.002$. Then, we considered $U(0.002,0.1)\approx \frac{1}{49}\sum_{k=1}^{49} \delta_{\hat f_0^k}$, with $\hat f_0^k\in \{0.002, 0.004,\ldots,0.1\}$.
Finally, we defined
\begin{equation} \label{eq:Theta_N_simul}
    \Theta_N \coloneqq \{1.5\} \times \{0\} \times \{0.5,0.52,\ldots,1\} \times \{0.002,0.004, \ldots, 0.1\},
\end{equation}
and
\begin{equation} \label{eq:mu_N_simul}
    \mu_N\coloneqq \delta_{1.5}  \otimes \delta_0 \otimes\frac{1}{25}\sum_{j=1}^{25} \delta_{\hat r_R^j} \otimes \frac{1}{49}\sum_{k=1}^{49} \delta_{\hat f_0^k},
\end{equation}
which has a support consisting of $N=1225$ points. From a practical perspective, the introduction of $\mu_N$ results in performing the numerical experiments on $N=1225$ different tumors, corresponding to $\theta^{j,k}=(1.5,0,\hat r_R^j, \hat f_0^k)$ with $j=1,\ldots,25$ and $k=1,\ldots,49$. We used these simulated tumors to evaluate every therapy schedule that we considered.

\subsubsection*{Numerical integration scheme}
For every treatment schedule that we tested, we approximated the dynamics of the control systems \eqref{eq:intro_model_nondim} using the Explicit Euler method with stepsize $\tau_{\mathrm{discr}}=r_S/8=3.375\cdot 10^{-3}$.
In the non-rescaled system \eqref{eq:intro_model}, this choice corresponds to dividing each simulation day into $8$ equi\-spaced time nodes.

\subsubsection*{Time-To-Progression (TTP)} 
In this paper, we measure the performances of the different strategies of drug scheduling using the Time-To-Progression (TTP), i.e., the length of the period from $\tau=0$ to the first instant $\tau_{\mathrm{TTP}}>0$ when the tumor is $20\%$ bigger (in terms of population) than the initial size $n_0$.
The disease is said `to be in progression' when this condition occurs.
More precisely, for every $\theta\in\Theta_N$ we set
\begin{equation} \label{eq:def_TTP}
    \tau_{\mathrm{TTP}}^\theta \coloneqq
    \inf_{\tau>0}\{ s^\theta(\tau) + r^\theta(\tau) \geq 1.2\, n_0 \}.
\end{equation}
We report that this quantity was employed also in \cite{turnover2021}, and we recall that is related to the radiologically-defined criterion `RECIST' (see \cite{eisenhauer2009RECIST}).
In view of the strategies resulting from ensemble optimal control problems, it is convenient to introduce a variant of TTP, which we denote with TTP$'$. We define it as follows:
\begin{equation} \label{eq:def_TTP'}
    \tau_{\mathrm{TTP'}}^\theta \coloneqq
    \sup_{\tau>0}\{ s^\theta(\tau) + r^\theta(\tau) \leq 1.2\, n_0 \}
\end{equation}
for every $\theta\in\Theta_N$.
\begin{remark} \label{rmk:TTPvsTTP'}
    Since $s^\theta(0) + r^\theta(0)=n_0< 1.2 \, n_0$, the continuity of the mapping $\tau\mapsto s^\theta(\tau) + r^\theta(\tau)$ for every $\theta$ implies that $\tau_{\mathrm{TTP}}^\theta \leq \tau_{\mathrm{TTP'}}^\theta $.
    If for some $\theta$ there exists a unique instant $\bar \tau >0$ such that $s^\theta(\bar \tau) + r^\theta(\bar \tau) = 1.2\, n_0$, then it turns out that $\tau_{\mathrm{TTP}}^\theta  = \tau_{\mathrm{TTP'}}^\theta = \bar \tau$.
\end{remark}

\subsection{Benchmark approaches: MTD and Adaptive Therapy} \label{subsec:benchmark}
We begin by describing the benchmark therapies for evaluating the performances of the schedules proposed later.
The baseline is the Maximal Tolerated Dose protocol (MTD), which consists of constantly giving the patient the maximal quantity of the drug. In our model, this results in setting $u_{\mathrm{MTD}}(\tau) = 1$ for every $\tau>0$.
When the competition between sensitive and resistant clones for shared resources is relevant (like in the present model), MTD has already been shown to be sub-optimal. For instance, the role played by this Darwinian mechanism was highlighted in \cite{moff2012evolutionary}.
Namely, the MTD leads rapidly to the extinction of the sensitive population, resulting in a quick and dramatic shrinking of the tumor size. 
After observing partial or total regression of the disease for a certain amount of time---whose duration depends indeed on the proliferation rate $\hat r_R$ of the resistant cells---, a resurgence of the tumor is typically observed, and the resistant cells are predominant. \\
Adaptive Therapy (AT) has been introduced in \cite{moff2009adaptive} for enhancing the long-time management of the disease, and it has been proven effective in several studies involving both numerical simulations \cite{moff2017zhang} and a pilot study with patients \cite{moff2017zhang,moff2022evolution}. 
The key idea is to adaptively introduce \emph{vacation periods} when drug treatment is discontinued.
Here, we apply the strategy as described in  \cite{turnover2021} on the ensemble of tumors modeled by $\Theta_N$ (see \cref{eq:Theta_N_simul}) and which we simulate the {dynamics} of. 
More precisely, the AT relies on the following steps, for every $\theta\in\Theta_N$:
\begin{enumerate}
\item At $\tau =0$, set {$u=1$} and keep it constant while $s^\theta(\tau) + r^\theta(\tau)> n_0/2$ (treatment period).
\item After observing $s^\theta(\tau)+r^\theta(\tau)\leq n_0/2$, reset {$u=0$} and keep it constant while  $s^\theta(\tau) + r^\theta(\tau)< n_0$ (vacation period).
\item After observing again $s^\theta(\tau) + r^\theta(\tau) \geq n_0$, reset {$u=1$} and go to Step~(1).
\end{enumerate}
To distinguish between this approach and the variant of AT that we will discuss later, we refer to the strategy just described as `On-Off'~AT.
The results are reported in \cref{table:MTD_AT1}.
We remark that in the experiments involving MTD and `On-Off' AT we observed $\tau_{\mathrm{TTP}}^\theta = \tau_{\mathrm{TTP'}}^\theta $ for every $\theta\in \Theta_N$ (see \cref{rmk:TTPvsTTP'}, and \cref{eq:def_TTP,eq:def_TTP'} for the definitions).

\begin{table}[h]
\scriptsize
\caption{MTD and `On-Off' AT.}
\label{table:MTD_AT1}

\centering
\begin{tabular}{c c c c c}
\toprule
& $n_0$ & max TTP & min TTP & mean TTP \\
\midrule
MTD & $0.25$ & $521$ days & $109$ days & $210$ days \\
`On-Off' AT & $0.25$ & $575$ days & $109$ days & $213$ days \\
\midrule
MTD & $0.50$ & $593$ days & $155$ days & $270$ days \\
`On-Off' AT & $0.50$ & $764$ days & $155$ days & $285$ days \\
\midrule
MTD & $0.75$ & $748$ days & $283$ days & $410$ days \\
`On-Off' AT & $0.75$ & $1196$ days & $283$ days & $462$ days \\
\bottomrule
\end{tabular}

\vspace{5pt}
\footnotetext{Comparison in terms of Time-to-Progression (TTP) between MTD and `On-Off'~AT. 
The mean TTP is computed by taking the average over the elements of the set $\Theta_N$ (see \cref{eq:Theta_N_simul}).}
\end{table}
\noindent
The values of the TTP in \cref{table:MTD_AT1} are perfectly consistent with the findings shown in \cite[Figure~3.A]{turnover2021}. Namely, on the one hand, the `On-Off'~AT never performed worse than the classical MTD. 
On the other hand, the advantage of the introduction of `treatment vacation periods' in AT is apparent when $n_0=0.5$ and $n_0=0.75$, where the progression of the disease is delayed respectively of $2$ and $7$ weeks on average, if compared to the mean TTP achieved using MTD.
In \cref{fig:MTD_AT1}, we plotted the {dynamics} of the tumor populations (sensitive, resistant, and total) when treated with MTD and `On-Off'~AT, for a specific value of the parameter $\theta= (\hat d_D, \hat d_T, \hat r_R, \hat f_0)$. For such a value of $\theta$, `On-Off'~AT dramatically outperforms MTD.
On the one hand, in MTD the sensitive sub\-population is rapidly extinct, and it is replaced by the resistant cells, which do not have any biological competitor for resources.
On the other hand, in `On-Off'~AT the `treatment vacation periods' allow for restocking the sensitive subpopulation, preventing its extinction. This results in an increased fight between sensitive and resistant cells for shared resources, contributing to the delay of the progression of the disease. Moreover, we notice that in `On-Off'~AT we can directly read the treatment-vacation cycles from the graph of the populations' {dynamics}.

\begin{figure}
    \centering
    \includegraphics[width=0.49\linewidth]{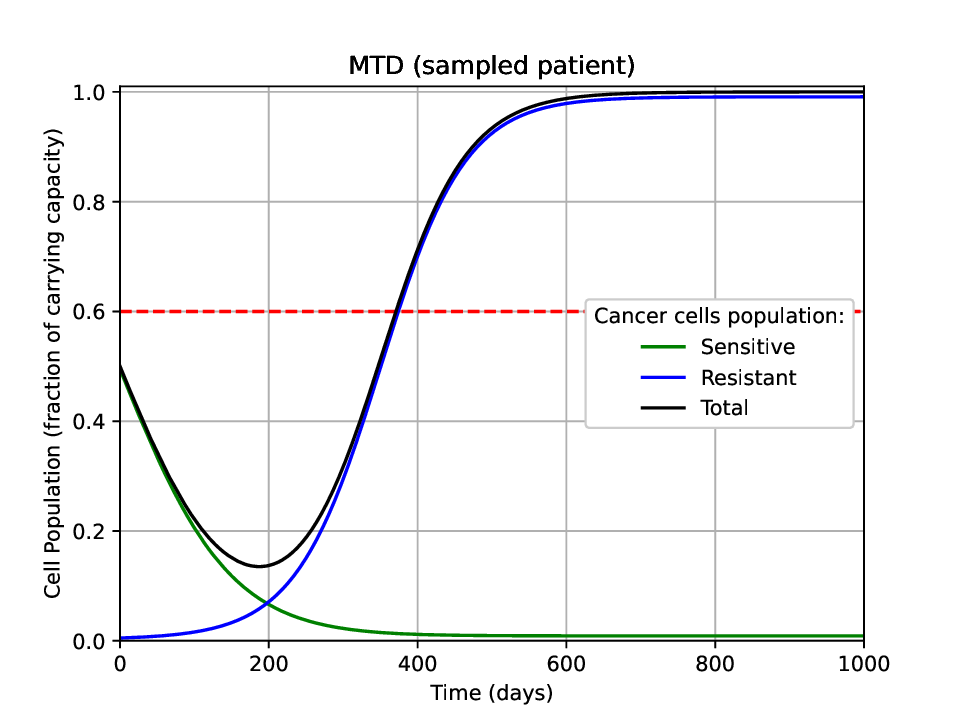}
    \includegraphics[width=0.49\linewidth]{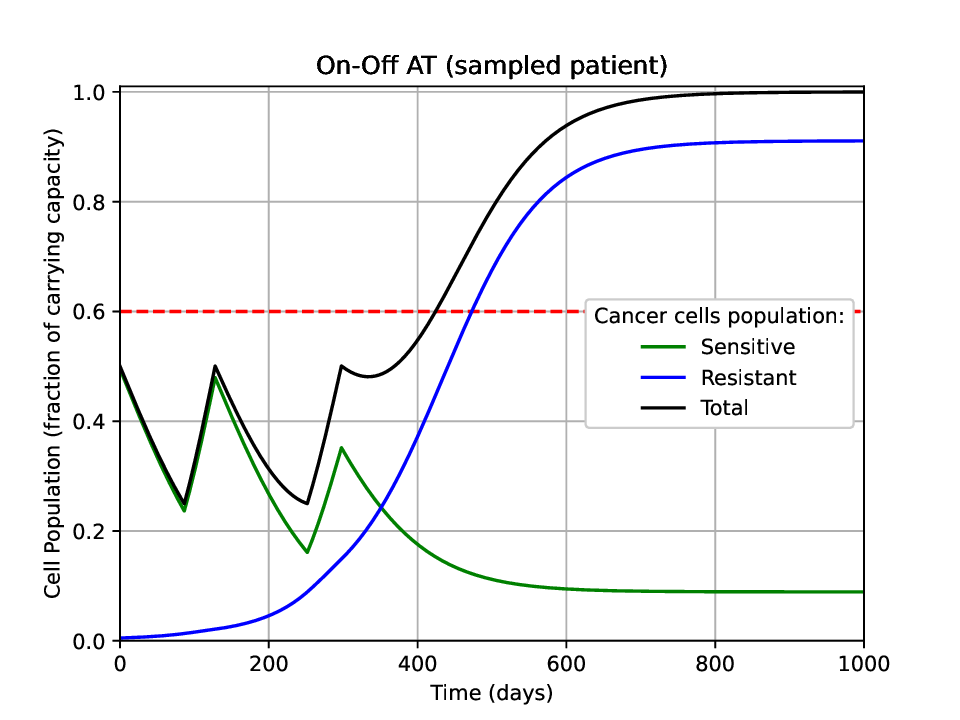}
    \caption{\footnotesize {Dynamics} of the tumor corresponding to $\theta = (1.5,0,0.66,0.01)$ and with initial size $n_0=0.5$ undergoing MTD (left) and `On-Off'~AT (right).
    The dashed horizontal line represents the threshold tumor size related to the condition `cancer in progression'.
    For this $\theta$, the advantage of `On-Off'~AT (right picture) over MTD (left picture) is apparent. The TTP of `On-Off'~AT and MTD is $424$ days and $370$ days, respectively, with a progression delay of almost $8$ weeks.}
    \label{fig:MTD_AT1}
\end{figure}

\subsection{Ensemble optimal control problems: formulation and resolution} \label{subsec:ens_OC_form}
In this part, we consider the problem of finding a therapy schedule $u=u(\tau)$ for the control system \eqref{eq:intro_model_nondim}, in the case some of the parameters appearing in \eqref{eq:intro_model_nondim} are affected by uncertainty. We insist that, when solving an ensemble optimal control problem, we aim to find a shared policy $u=u(\tau)$ that shall be used \emph{for every admissible tuple of parameters}.
In \cref{subsec:param_setting} we introduced the space of parameters and the discrete measure $\mu_N$. Before addressing the resolution of the ensemble optimal control problem, we are left to set the time horizon for the controlled {dynamics} and the function $\ell\colon \R\to\R$ that designs the integral cost.

\vspace{6pt}

\begin{paragraph}{{Simulation} interval}
    For every $n_0\in\{0.25,0.50,0.75\}$ we set the value of the time horizon $T_{\mathrm{hor}}$ according to the maximal value of TTP observed in \cref{table:MTD_AT1}. 
    We collect the selected values in \cref{table:Time_horizon}, where we also reported the normalized time $T\coloneqq T_{\mathrm{hor}} \cdot r_S$ that is required in the definition of the functional $\J$ (see \cref{eq:def_ens_funct}). 
    We recall that $r_S=0.027 \,\mathrm{days}^{-1}$ denotes the proliferation rate of the sensitive cells, and it has been used for the time-normalization of the original system \eqref{eq:intro_model} into \eqref{eq:intro_model_nondim}.  
    
\begin{table}[h]
\caption{Time horizon.}
\label{table:Time_horizon}

\centering
\begin{tabular}{c c c}
\toprule
$n_0$ & $T_{\mathrm{hor}}$ & $T$ \\
\midrule
$0.25$ & $750$ days & $20.25$ \\
$0.50$ & $1000$ days & $27.00$ \\
$0.75$ & $1500$ days & $40.50$ \\
\bottomrule
\end{tabular}

\vspace{5pt}
\footnotetext{Values of $T_{\mathrm{hor}}$ used for the optimal control problems.}
\end{table}

\begin{remark} \label{rmk:choice_T}
    The choice of $T_{\mathrm{hor}}$ in \cref{table:Time_horizon} may sound arbitrary, and to some extent, it is. For every $n_0$, we set $T_{\mathrm{hor}}$ to be $\approx 30\%$ larger than the best TTP obtained through `On-Off'~AT.
    However, as we will discuss in detail later in \cref{rmk:time_san_check}, the resolution of an ensemble optimal control problem tends automatically to find the interval where the decision-making on the therapy schedule has the most significant effect.
\end{remark}
\end{paragraph}

\vspace{6pt}

\begin{paragraph}{Integral cost design}
    A crucial step in resolving any (ensemble) optimal control problem is the definition of the objective functional to be minimized.
    In view of providing an explicit expression for the functional $\J\colon \U\to \R$, we need to specify the function $\ell\colon \R\to \R$ involved in the integral cost.
    Here, we propose and compare two possible alternatives for $\ell$. 
    The first natural attempt consists of setting
    \begin{equation} \label{eq:ell_cost_1}
        \ell^1(n) \coloneqq n-n_0,
    \end{equation}
    which results in linearly penalizing any deviation of the tumor size $n^\theta(\tau) = s^\theta(\tau) + r^\theta(\tau)$ from the initial condition $n_0$.
    We observe that $\ell^1$ has a symmetric behavior around $n_0$: If $\delta>0$ denotes a variation of the tumor size, a decrease amounting to $-\delta$ (i.e., $n=n_0-\delta$) is awarded a `negative cost' $-\delta$, which in absolute value is as much as $\ell^1$ penalizes the growth $n=n_0+\delta$.
    This property of $\ell^1$ does not fully reflect the physicians' aim when treating a patient for long-term disease control.
    Indeed, \emph{the goal is to stabilize the tumor as long as possible rather than to eradicate it}. 
    For this reason, we consider also the hyperbolic function $\ell^2$ defined as follows:
    \begin{equation} \label{eq:ell_cost_2}
        \ell^2(n) \coloneqq \sqrt{1 +(n-n_0)^2} -1 + (n-n_0),
    \end{equation}
    whose behavior around $n_0$ is not symmetric (see \cref{fig:costs_comp}).
    In this way, a positive deviation is penalized more than the corresponding negative deviation is awarded.\\
    Nonetheless, the design of a proper integral cost for the long-term cancer management is an interesting and delicate problem that goes beyond the scope of the present paper. We leave open this point for future developments.  

    \begin{figure}
        \centering
        \includegraphics[width=0.5\linewidth]{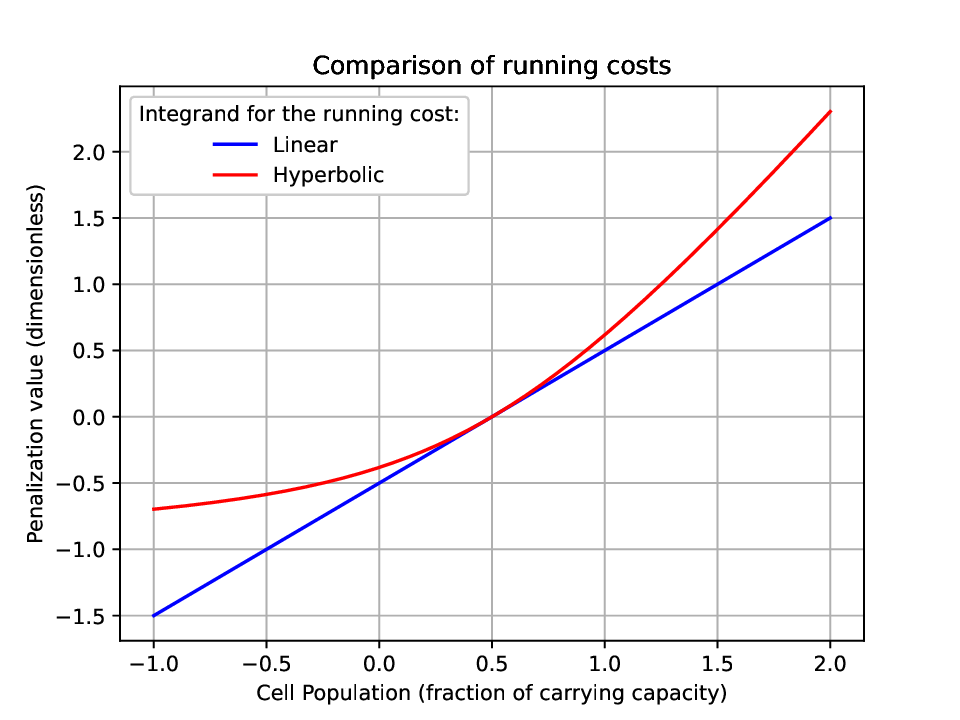}
        \caption{\footnotesize Graph of $\ell^1$ (linear) and $\ell^2$ (hyperbolic) for $n_0=0.50$ (see, respectively, \cref{eq:ell_cost_1,eq:ell_cost_2}).}
        \label{fig:costs_comp}
    \end{figure}
\end{paragraph}

\subsubsection*{Minimization of the cost functionals}
For every $n_0\in \{0.25,0.5,0.75\}$ we considered the functionals $\J^1,\J^2\colon \U \to \R$ defined as follows:
\begin{equation} \label{eq:funct_simul}
    \J^1(u) \coloneqq  \int_0^T 
    \int_\Theta  \ell^1 \left(n_u^\theta(\tau )\right)  \de\mu_N(\theta)
    \de \tau, \qquad
    \J^2(u) \coloneqq  \int_0^T 
    \int_\Theta  \ell^2 \left(n_u^\theta(\tau )\right)  \de\mu_N(\theta)
    \de \tau,
\end{equation}
where $\ell^1,\ell^2$ have been introduced in \cref{eq:ell_cost_1,eq:ell_cost_2}, $T$ is set according to \cref{table:Time_horizon}, and $\mu_N$ is the discrete measure on the ensemble of parameters that we defined in \cref{eq:mu_N_simul}.
We used the following gradient-based minimization scheme for the numerical minimization of $\J^1$ and $\J^2$ (see \cref{prop:g_descent}):
\begin{equation} \label{eq:g_descent_simul}
    u^i_{k+1} \gets \Pi_\U \left[ u_k^i - \eta \nabla_{u_k^i} \J^i \right]\qquad k\geq 0, \quad i=1,2,
\end{equation}
where $\Pi_\U\colon  L^2([0,T], \R) \to \U$ is the projection onto $\U$ (see \cref{eq:proj_convex}), and where we set $\eta=0.125$.
In order to avoid the introduction of any bias towards turning the therapy on or off, we considered as an initial guess the control $u^i_0\equiv 0.5$ for $i=1,2$, which is at every instant equi\-distant from $1$ (full-dosage) and $0$ (discontinued therapy).
We repeated the step described in \cref{eq:g_descent_simul} for $500$ iterations. 
We implemented the simulations in Python, and we relied on the automatic differentiation tools of Pytorch.
{ We also performed numerical simulations on the minimax problems related to the functionals functionals $\J^1_{\max},\J^2_{\max} \colon \U \to \R$ defined as follows:
\begin{equation*} %\label{eq:funct_minimax}
    \J^1_{\max} (u) \coloneqq   \max_{\theta \in \Theta_N} \left\{ \int_0^T 
      \ell^1 \left(n_u^\theta(\tau )\right) 
    \de \tau \right\}, \quad
    \J^2_{\max} (u) \coloneqq  \max_{\theta \in \Theta_N} \left\{\int_0^T  \ell^2 \left(n_u^\theta(\tau )\right)  \de \tau \right\}.
\end{equation*}
For this task, we employed the subgradient-like method proposed in \cite[Algorithm~1]{Scag25}.
}

\begin{remark} \label{rmk:time_san_check}
    From \cref{fig:simul_ctrls} we notice that, for $\tau$ close to $T$, the computed controls $u^1_{500}$ and $u^2_{500}$ for $n_0=0.50$ do not deviate significantly from the value of the initial guess $u^1_{0}=u^2_0 \equiv 0.5$. We observed a similar behavior for $n_0=0.25$ and $n_0=0.75$ as well.
    This phenomenon is since, throughout the iterations of the gradient descent $k=0,\ldots,500$, $\nabla_{u_k^1} \J^1(\tau)$ and $\nabla_{u_k^2} \J^2(\tau)$ are small when $\tau$ is close to $T$.
    From the model perspective, this suggests that, at later stages of the {simulation} horizon (i.e., when $\tau$ is close to $T$), the differences in the treatment strategy do not have a relevant impact on the final outcome.
    Intuitively, if for $\tau\geq \bar \tau$ most of the tumors of the ensemble $\Theta_N$ have already progressed to the carrying capacity of the system, then we expect that any change in the treatment policy after the instant $\bar \tau$ will not drastically improve the result.
    We can use this phenomenon as an empirical test for deciding whether the time horizon $T$ has been appropriately set large (see \cref{rmk:choice_T}).
    {Finally, we notice that the same phenomenon can be observed in the graph of the computed controls for $\J^1_{\max}$ and $\J^2_{\max}$.}
\end{remark}

\begin{figure}
    \centering
    \includegraphics[width=0.49\linewidth]{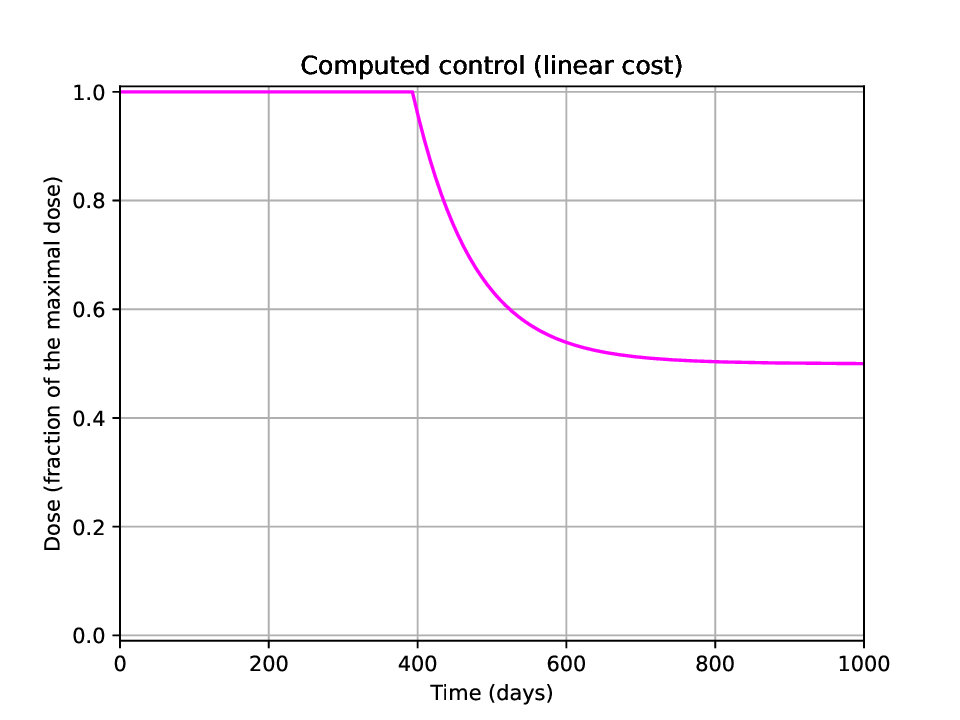}
    \includegraphics[width=0.49\linewidth]{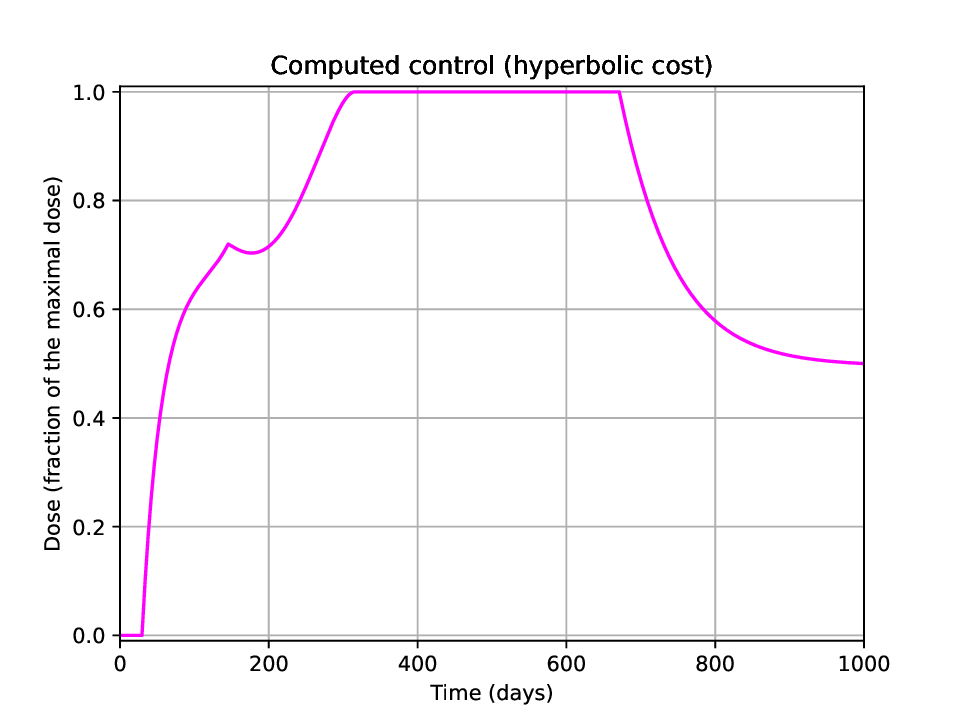}
    \caption{\footnotesize Controls computed by minimizing $\J^1$ (left) and $\J^2$ (right) for $n_0=0.50$. The profiles corresponding to $n_0=0.25$ and $n_0=0.75$ are qualitative similar.}
    \label{fig:simul_ctrls}
\end{figure}

\begin{figure}
    \centering
    \includegraphics[width=0.49\linewidth]{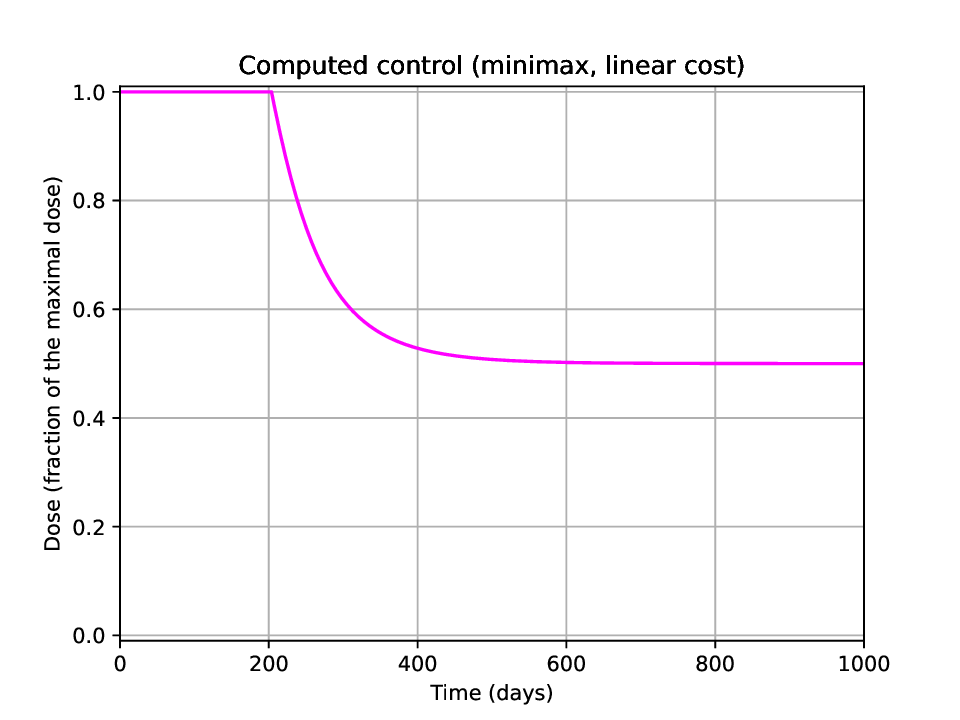}
    \includegraphics[width=0.49\linewidth]{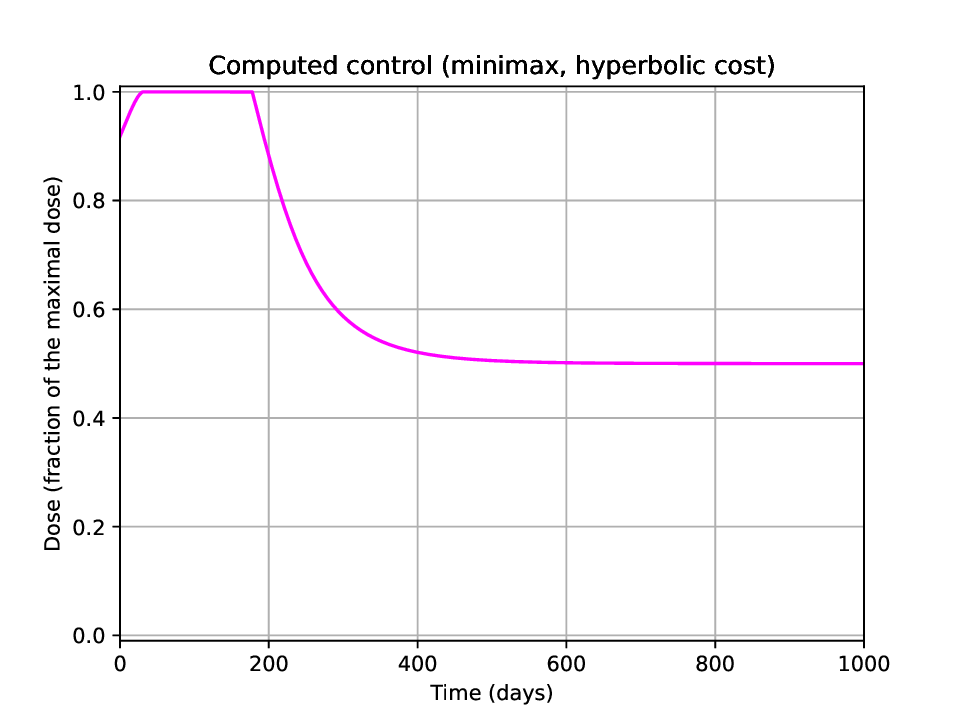}
    \caption{\footnotesize Controls computed by minimizing $\J^1_{\max}$ (left) and $\J^2_{\max}$ (right) for $n_0=0.50$. The profiles corresponding to $n_0=0.25$ and $n_0=0.75$ are qualitative similar.}
    \label{fig:simul_ctrls_minimax}
\end{figure}

\subsubsection*{Results}
We first show in \cref{fig:simul_ctrls} the profiles of the controls computed via the gradient-based algorithm outlined above. 
Interestingly, the integral costs $\ell^1$ and $\ell^2$ tend to select optimal controls showing rather different qualitative behavior.
Indeed, on the one hand, the computed optimal control for the functional $\J^1$---which incorporates the \emph{linear} size penalization of the tumor---suggests giving the patient the maximal drug dose for a rather long initial interval. For instance, as we can read from \cref{fig:simul_ctrls}, at the end of the optimization procedure for $n_0=0.50$, $u^1_{500}$ is constantly equal to $1$ in the first $\approx 400 \,\mathrm{days}$. 
For this reason, when using this strategy, we expect to observe performances in terms of $\mathrm{TTP}$ very close to the ones reported in \cref{table:MTD_AT1} for MTD.
On the other hand, the \emph{hyperbolic} size penalization involved in the definition of $\J^2$ induces an entirely different profile in the computed control. 
Indeed, $u^2_{500}$ shows an initial phase where the therapy is completely turned off, and the tumor can grow freely. 
The rationale behind this choice is to increase the size of the sensitive cells in the first part of the therapy to further promote the fight for resources between the two sub\-populations.
In \cref{fig:Sum_Hyp} we show the {progression} of the disease when adopting the computed controls $u^1_{500}$ and $u^2_{500}$ for a tumor with the same parameters as in \cref{fig:MTD_AT1}. 

\begin{figure}
    \centering
    \includegraphics[width=0.49\linewidth]{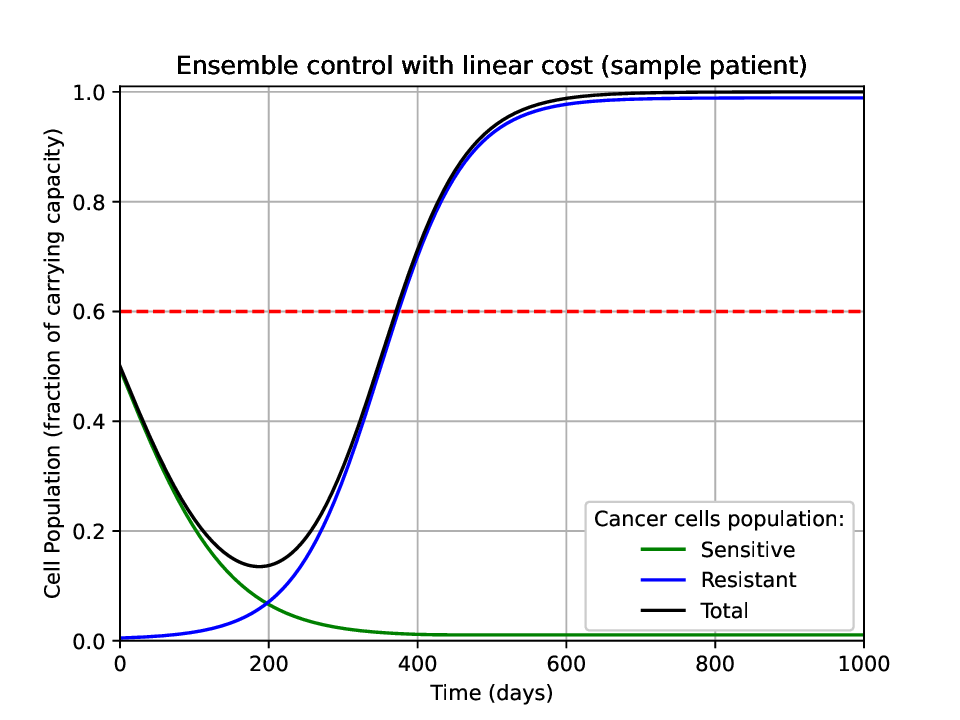}
    \includegraphics[width=0.49\linewidth]{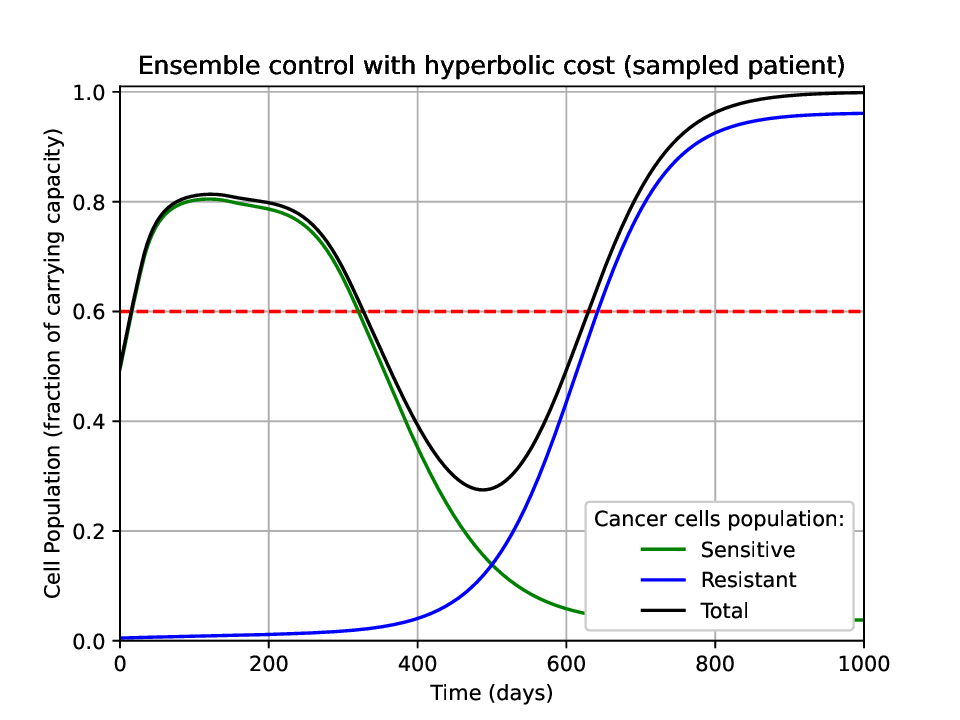}
    \caption{\footnotesize {Dynamics} of the tumor corresponding to $\theta = (1.5,0,0.66,0.01)$ and with initial size $n_0=0.5$ using the treatment prescribed by the approximated minimizers of $\J^1$ (linear cost, left) and of $\J^2$ (hyperbolic cost, right).
    The dashed horizontal line represents the threshold tumor size related to the condition `cancer in progression'.}
    \label{fig:Sum_Hyp}
\end{figure}

\noindent
The graphs in \cref{fig:Sum_Hyp} reflect the different qualitative behavior of the computed minimizers of $\J^1$ and $\J^2$.
Namely, on the one hand, we notice that the picture related to $\J^1$ (left) is almost indistinguishable from the {progression} under the MTD approach (see the picture on the left-hand side in \cref{fig:MTD_AT1}).
On the other hand, the control obtained by minimizing $\J^2$ (right) succeeds in delaying the growth of the resistant sub\-population. However, in the first part of the treatment, the total tumor size exceeds the progression threshold.
Hence, it turns out that for such a strategy $\tau_{TTP}^\theta < \tau_{TTP'}^\theta$ (see \cref{eq:def_TTP,eq:def_TTP'} for the definitions). 
Indeed, when the `progression size-level' is crossed for the first time, the tumor is mainly made of sensitive cells, which are killed at a proper later stage.
{Finally, we observe that the computed controls for the minimax formulation (i.e., related to $\J^1_{\max}, \J^2_{\max}$) show in both cases an initial phase where they are close to $1$, with only minor differences between them. For this reason, these scheduling are expected to behave similarly as the MTD regime.}\\
In \cref{table:Sum_Hyp} we report the TTPs resulting from the computed schedules. We insist on the fact that for having a fair evaluation of the TTP, in the rows marked as `Hyperbolic' (corresponding to the cost $\ell^2$), we used $\tau_{TTP'}^\theta$. 
Moreover, when testing the policy related to the linear cost $\ell^1$, we observed $\tau_{TTP}^\theta = \tau_{TTP'}^\theta$ for every $\theta\in \Theta_N$.

\begin{table}[h]
\caption{Strategies related to linear and hyperbolic cost.}
\label{table:Sum_Hyp}

\centering
\begin{tabular}{c c c c c}
\toprule
& $n_0$ & max TTP & min TTP & mean TTP \\
\midrule

Lin.~(averaged) & $0.25$ & $521$ days & $109$ days & $210$ days \\
Hyp.~(averaged) & $0.25$ & $672$ days & $9$ days & $102$ days \\
Lin.~(minimax)  & $0.25$ & $506$ days & $109$ days & $209$ days \\
Hyp.~(minimax)  & $0.25$ & $499$ days & $107$ days & $210$ days \\
\midrule

Lin.~(averaged) & $0.50$ & $593$ days & $155$ days & $270$ days \\
Hyp.~(averaged) & $0.50$ & $850$ days & $15$ days & $336$ days \\
Lin.~(minimax)  & $0.50$ & $584$ days & $155$ days & $267$ days \\
Hyp.~(minimax)  & $0.50$ & $580$ days & $155$ days & $267$ days \\
\midrule

Lin.~(averaged) & $0.75$ & $747$ days & $283$ days & $410$ days \\
Hyp.~(averaged) & $0.75$ & $968$ days & $507$ days & $660$ days \\
Lin.~(minimax)  & $0.75$ & $752$ days & $282$ days & $402$ days \\
Hyp.~(minimax)  & $0.75$ & $755$ days & $278$ days & $400$ days \\
\bottomrule
\end{tabular}

\vspace{5pt}
\footnotetext{Comparison in terms of Time-to-Progression (TTP) between the schedules obtained by minimizing $\J^1, \J^1_{\max}$ (linear) and $\J^2, \J^2_{\max}$ (hyperbolic). The mean TTP is computed by taking the average over the elements of the set $\Theta_N$ (see \cref{eq:Theta_N_simul}). We insist on the fact that in the rows marked as `Hyperbolic' we used $\tau_{TTP'}^\theta$.}
\end{table}

\noindent
The results in \cref{table:Sum_Hyp} confirm that the schedule obtained by minimizing $\J^1$ (linear cost) is substantially equivalent to MTD since the measured TTPs are almost identical. {The same observation holds as well for the controls related to the minimax functionals $\J^1_{\max},\J^2_{\max}$.}
As for the policy related to $\J^2$ (hyperbolic cost), interpreting the results is more complicated.
Indeed, on the one hand, for $n_0=0.50$ and $n_0=0.75$ the computed mean~TTPs ($336$ days and $660$ days, respectively) are promising and show an apparent improvement when compared to the benchmark strategy `On-Off'~AT (see \cref{table:MTD_AT1}).
On the other hand, for $n_0=0.25$, the policy related to $\J^2$ performs poorly since the mean TTP has more than halved if compared to the other treatment approaches.
In addition, for $n_0=0.25$ and $n_0=0.50$, we observe a dramatic drop in min~TTP, i.e., the time-to-progression for the worst-case tumor.
In \cref{fig:worst_hyp} we reported the evolution of the tumors with $n_0=0.25$ and $n_0=0.50$ for which $\tau_{\mathrm{TTP}'}^\theta$ attains the minimal value ($9$~days and $15$~days, respectively).

\begin{figure}
    \centering
    \includegraphics[width=0.49\linewidth]{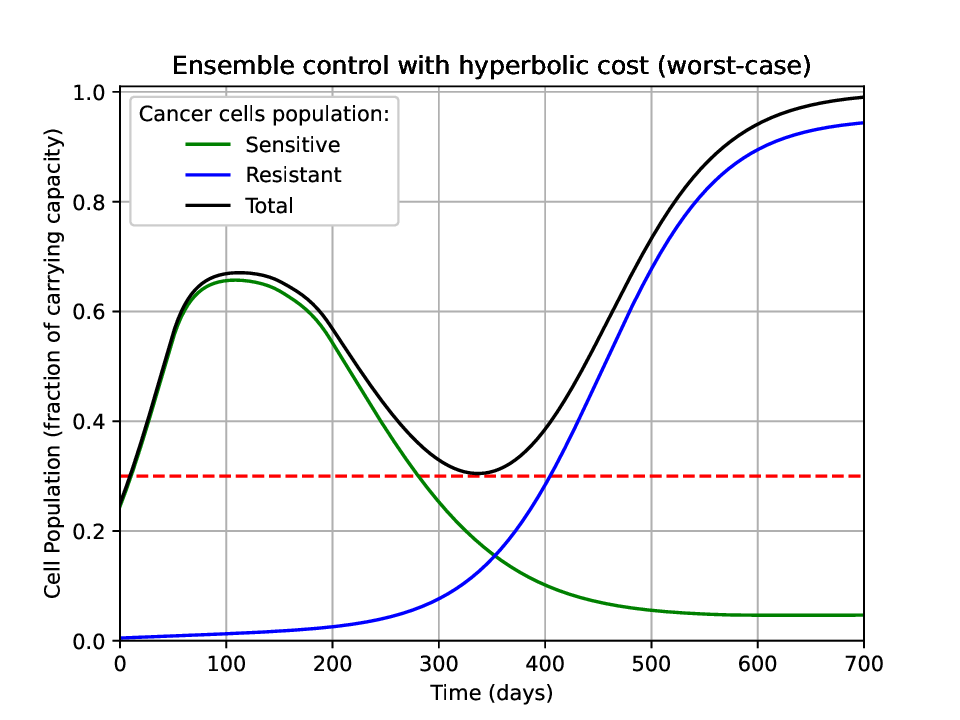}
    \includegraphics[width=0.49\linewidth]{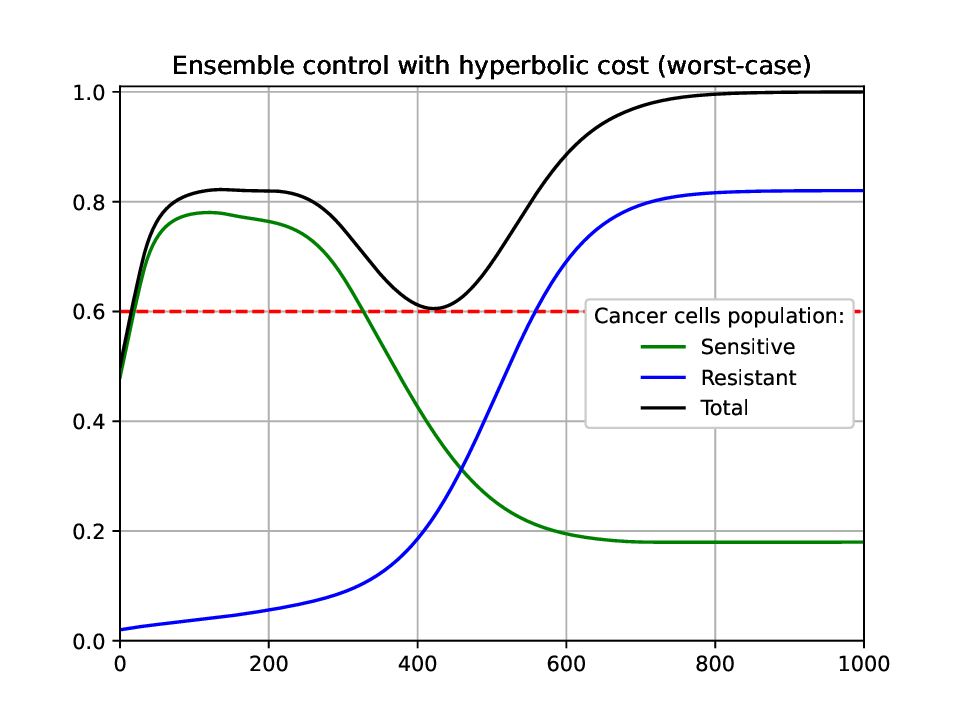}
    \caption{\footnotesize Worst-case tumors for the schedules computed by minimizing $\J^2$ for $n_0=0.25$ ($ \tau^\theta_{\mathrm{TTP}'} = 9$~days, left) and $n_0=0.50$ ($\tau^\theta_{\mathrm{TTP}'} = 14$~days, right). The tumors corresponding to these graphs have, respectively, parameters $\theta=(1.5,0,0.72,0.02)$ (left) and $\theta=(1.5,0,0.84,0.04)$ (right).}
    \label{fig:worst_hyp}
\end{figure}

\noindent
We notice that the {dynamics} of the total tumor population is qualitatively similar in the two graphs in \cref{fig:worst_hyp}.
Namely, the computed policy prescribes an initial phase where the tumor can freely grow. However, differently from what is observed in the right-hand side picture of \cref{fig:Sum_Hyp}, the total population size never shrinks below the progression threshold.

\subsubsection*{Ensemble optimal control: conclusions} 
When solving ensemble optimal control problems for designing drug schedules, the choice of the cost that penalizes the tumor size plays a crucial role.
On the one hand, the linear penalization led to outcomes substantially identical to MTD. On the other hand, the hyperbolic cost introduced in \cref{eq:ell_cost_2} showed promising results. 
In the latter case, the strategy is to increase the number of sensitive cells in the early stage of therapy to accentuate the fight for resources in the two sub\-populations.
{A limitation of the resulting optimal policy is that it allows the tumor to grow substantially beyond the progression threshold before initiating any reduction in tumor size (see the right-hand side of \cref{fig:Sum_Hyp}). Moreover, in some scenarios, even during the active treatment phase, the tumor size does not decrease below the progression threshold (see \cref{fig:worst_hyp}).}
{Motivated by} these observations, we propose a variant of Adaptive Therapy in the next subsection.

\subsection{`Off-On' Adaptive Therapy} \label{subsec:Off-On}
In this part, we suggest a variant for the `On-Off'~AT detailed in \cref{subsec:benchmark}.
The source of inspiration is the policies related to the hyperbolic cost that have been discussed in \cref{subsec:ens_OC_form}. 
Namely, we aim to formulate an adaptive therapy that allows tumor growth to be controlled at the early stage of the therapy, without exceeding the progression threshold.
More precisely, we define the `Off-On'~AT through the following steps, for every $\theta\in\Theta_N$:
\begin{enumerate}
\item At $\tau =0$, set {$u=0$} and keep it constant while $s^\theta(\tau) + r^\theta(\tau)<  1.2\, n_0$ (vacation period).
\item After observing $s^\theta(\tau)+r^\theta(\tau)\geq 1.2\, n_0$, reset {$u=1$} and keep it constant while  $s^\theta(\tau) + r^\theta(\tau)> n_0/2$ (treatment period).
\item After observing $s^\theta(\tau) + r^\theta(\tau) \leq n_0/2$, reset {$u=0$} and go to Step~(1).
\end{enumerate}
The initial `vacation period' allows the reproduction of sensitive cells to promote an increased fight for resources with the resistant clones. However, as soon as the tumor size gets close to the progression threshold, the therapy is started. Finally, the therapy is discontinued when the total population shrinks below the $50\%$ of the initial size $n_0$.
We present the results in \cref{table:Off-On}, and we show the {progression} of the disease for a specific parameter $\theta\in \Theta_N$ in \cref{fig:Off-On}. We report again the information about `On-Off'~AT to facilitate the comparison.

\begin{table}[h]
\caption{`Off-On' Adaptive Therapy}
\label{table:Off-On}

\centering
\begin{tabular}{c c c c c}
\toprule
& $n_0$ & max TTP & min TTP & mean TTP \\
\midrule

`On-Off' AT  & $0.25$ & $575$ days  & $109$ days & $213$ days \\
`Off-On' AT  & $0.25$ & $589$ days  & $107$ days & $217$ days \\
\midrule

`On-Off' AT  & $0.50$ & $764$ days  & $155$ days & $285$ days \\
`Off-On' AT  & $0.50$ & $822$ days  & $167$ days & $306$ days \\
\midrule

`On-Off' AT  & $0.75$ & $1196$ days & $283$ days & $462$ days \\
`Off-On' AT  & $0.75$ & $1500$ days & $400$ days & $589$ days \\
\bottomrule
\end{tabular}

\vspace{5pt}
\footnotetext{Comparison in terms of Time-to-Progression (TTP) between `On-Off'~AT and `Off-On'~AT. The mean TTP is computed by taking the average over the elements of the set $\Theta_N$ (see \cref{eq:Theta_N_simul}).}
\end{table}

\begin{figure}
    \centering
    \includegraphics[width=0.49\linewidth]{ADA1_specif.eps}
    \includegraphics[width=0.49\linewidth]{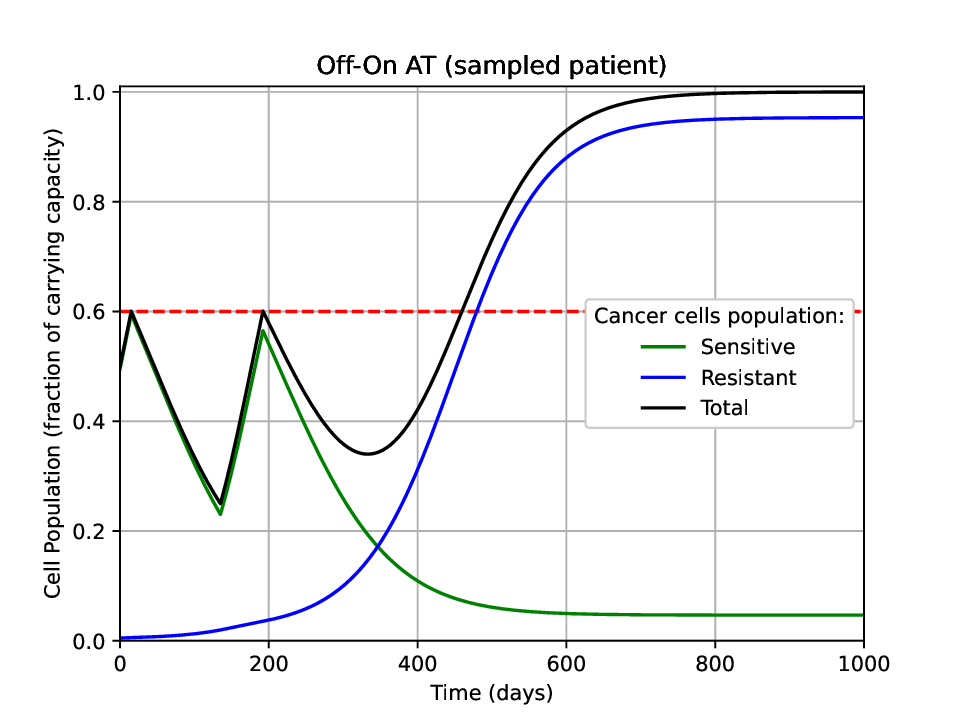}
    \caption{\footnotesize {Dynamics} of the tumor corresponding to $\theta = (1.5,0,0.66,0.01)$ and with initial size $n_0=0.5$ using `On-Off'~AT (left) and `Off-On'~AT (right).
    The dashed horizontal line represents the threshold tumor size related to the condition `cancer in progression'.
    The TTP of `On-Off'~AT and `Off-On'~AT is $424$ days and $459$ days, respectively, with a progression delay of $5$ weeks.}
    \label{fig:Off-On}
\end{figure}

\noindent
The outcomes of `Off-On'~AT seem promising, as it shows an improvement in almost every performance indicator compared to `Off-On'~AT. The only exception is $\min$ TTP for $n_0=0.25$, where, in the worst-case tumor, the progression occurs $2$~days earlier with `Off-On'~AT than when adopting `On-Off'~AT.
{
%The Off-On threshold $n(t) > 1.2 n_0$ is chosen to exploit the competitive dynamics between sensitive and resistant tumor subpopulations.
\begin{remark} \label{rmk:delay_rationale}
In the idealized setting considered here, waiting until the threshold $s^\theta(\tau)+r^\theta(\tau)\geq 1.2\, n_0$  is crossed increases the effect of resource competition exerted by sensitive cells on resistant ones, thereby delaying overall tumor progression, independently of baseline tumor size. 
This threshold should not be interpreted as a direct clinical recommendation: in patients with high baseline tumor burden, early intervention may be required. 
For prostate cancer, our analysis is consistent with low-burden or biochemical recurrence scenarios, where treatment initiation is guided by PSA dynamics (e.g., EMBARK trial criteria \cite{Freedland2023}).
\end{remark}
}

\section*{Conclusions}

From a theoretical standpoint, the main contribution of this work is the development of an ensemble optimal control framework in a general control-affine setting, which is sufficiently flexible to encompass a wide class of cancer models. 
In particular, it accommodates multi-species (e.g., partially resistant or tolerant sub\-populations) tumor dynamics with inter-species interactions, as well as control-dependent transition mechanisms that naturally encode therapy-induced effects.
Moreover, the formulation is compatible with pharmacological refinements that decouple drug dose from drug concentration through additional dynamical equations, thereby moving beyond the assumption of direct proportionality between administration and effect.\\
A natural limitation of the present theoretical framework lies in the restriction to a fixed finite time horizon $T>0$. In this context, it would be particularly relevant to investigate an ensemble \emph{time-optimal} control problem, where the objective is directly related to clinically meaningful quantities such as the `time to progression'. 
This direction appears especially promising in view of applications in long-term disease management. Further extensions include the study of singular arcs (see \cite{aronna2025singular}) and turnpike-type properties in the ensemble setting (see \cite{hernandez2024averaged} for parameter-dependent problems, and \cite{trelat2025turnpike} for a survey), building on recent developments in optimal control theory.

In the computational part, we focused on a two-population model for prostate cancer. We recall that, after observing biochemical recurrence of prostate carcinoma without radiological evidence of metastasis, the choice between immediate or delayed starting of hormone therapy remains an open question to the present day.
Adopting an active surveillance strategy with periodic clinical, instrumental, and laboratory reassessment of the disease remains a viable option. On the one hand, in clinical practice, this strategy is mainly applied in cases of indolent disease with long waiting time before biochemical recurrence after radical treatment ($>18$ months), long time to PSA doubling ($>12$ months), low Gleason score at diagnosis ($6$ or $7$), and PSA levels $<1$~ng/mL post-prostatectomy or $<2$~ng/mL post radiotherapy \cite{preisser2024european,karim2025early}.
On the other hand, in the setting of metastatic disease, immediate initiation of systemic treatment remains a cornerstone of cancer therapy.\\
The results obtained in \cref{subsec:ens_OC_form} for the hyperbolic cost suggest that the active surveillance (already used in clinical practice) can be a nearly-optimal strategy for the long-term management of the disease {within the proposed modeling framework. However, this conclusion should be interpreted in light of the simplifying assumptions that underlie the model in \cref{eq:intro_model_nondim}, which do not explicitly account for more complex biological mechanisms such as resistance development, phenotypic plasticity, or spatial heterogeneity.}\\
{In this part, we have focused on a two-population model calibrated using parameter estimates reported in the literature for prostate carcinoma (non-metastatic and castration-sensitive, m0CSPC) treated with androgen deprivation therapy (ADT). While this setting provides a clinically relevant testbed for the proposed methodology, it should be understood as a simplified instance of the broader class of models covered by the theoretical framework, rather than a complete description of the underlying biological complexity.}

{The `Off-On' Adaptive Therapy proposed in \cref{subsec:Off-On} is motivated by the structure of the optimal controls obtained in the ensemble problem with hyperbolic cost, which suggests a delayed initiation of treatment. 
However, in contrast to the purely optimal ensemble policy, the proposed strategy retains a clinically interpretable rule in which therapy is triggered by tumor burden, after an initial surveillance phase. 
Although the computational study is performed in the context of prostate cancer treated with ADT, mainly due to the availability of parameter estimates in the literature, the Off-On structure is not disease-specific and can in principle be applied to other settings where competition between sensitive and resistant populations is present. 
From a clinical perspective, the strategy aims at improving the `time to progression' compared to standard maximum tolerated dose protocols, and in our simulations it also outperforms the previously proposed On-Off~AT. 
A relevant limitation is that the possibility of safely delaying treatment initiation must ultimately be assessed on a case-by-case basis by clinicians, even if such a delay appears advantageous from the modeling viewpoint.}

\subsection*{Acknowledgements}

Alessandro Scagliotti acknowledges support from the ERC Advanced Grant NEITALG, grant agreement No.~101198055 (P.I.: Prof.~Massimo Fornasier).\\
%The funding for Open Access publishing have been provided by the Technical University of Munich within the German DEAL agreement.

\bigskip
\begin{center}
  \FundingLogos
  
  \vspace{0.5em}
  \begin{tcolorbox}\centering\small
    % CHOOSE the sentence that applies to your grant/programme and replace placeholders.
    % 1) For Horizon Europe / ERC (recommended wording):
    Funded by the European Union. Views and opinions expressed are however those of the author(s) only and do not necessarily reflect those of the European Union or the European Research Council Executive Agency. Neither the European Union nor the granting authority can be held responsible for them.
    This project has received funding from the European Research Council (ERC) under the European Union’s Horizon Europe research and innovation programme (grant agreement No.~101198055, project acronym NEITALG).
    
    % 2) (Alternatively, for Horizon 2020 ERC grants use the H2020 wording:)
    % This project has received funding from the European Research Council (ERC) under the European Union's Horizon 2020 research and innovation programme (grant agreement No. <GRANT NUMBER>, project acronym <ACRONYM>).
  \end{tcolorbox}
\end{center}

%%%%%%%%%%%%%%%%%

\begin{appendix}
\section{Technical proofs of Section~2}
    \label{sec:appendix_theory}

{
\begin{proof}[Proof of \cref{lem:domain_invariance}]
Let us fix $\theta \in \Theta$ and $D\in \U$, and, to simplify the notations, let us set $X(t)\coloneqq X_D^\theta(t)$ where $X_D^\theta(t) \in \Delta^\theta$ (see \cref{eq:simplex_start_theta}).
First, arguing component by component, we notice that the system is quasi-positive: For $i\neq j$, $A_{i,j},A^I_{i,j}\geq 0$, so that, if $X_i(t)=0$, then $\dot X_i(t)\geq 0$. 
Therefore, it follows that $X_i(t)\geq 0$ for all $t\in[0,T]$.\\
Then, let us study $N(t)\coloneqq \sum_{i=1}^n X_i(t)$. Summing the equations and using \eqref{eq:pop_conservation}, the mutation terms cancel, yielding
\[
\dot N(t)
= \sum_{i=1}^n \left[ r_i\Big(1-\tfrac{N(t)}{K}\Big)\Big(1-2d_i^I\tfrac{D(t)}{D_{\max}}\Big) - d_i^T \right] X_i(t).
\]
If $N(t)=K$, then $\dot N(t) = -\sum_{i=1}^n d_i^T X_i(t)\leq 0$, so  that we deduce that $N(t)\leq K$ for all $t\in[0,T]$.\\
These two arguments show that $X_i(t)\geq 0$ for every $i=1,\ldots,n$ and that $\sum_{i=1}^NX_i(t)\leq K$, i.e., that $X(t)\in \Delta^\theta$ for all $t\in[0,T]$.
\end{proof}}

\begin{proof}[Proof of \cref{lem:diff_traj}]
    In the case of $\theta\in\Theta$ fixed, the result follows directly from \cite[Proposition~2.4]{scag23gf}. 
    However, here the thesis requires \cref{eq:1st_ord_traj} to hold uniformly as $\theta$ varies in $\Theta$.
    To see this, we need to investigate how the dependence on $\theta$ of the fields $F_0^\theta, F_1^\theta$ affects the estimates in the proof of \cite[Proposition~2.4]{scag23gf}.
    From \cite[Proposition~2.3]{scag23gf}, we first observe that, for every $\tau\in[0,T]$,
    \begin{equation} \label{eq:lip_dep_traj}
        |X_{{D+\e\delta}}^\theta(\tau) - X_{{D}}^\theta(\tau)| \leq C_1 \|{\delta}\|_{L^2} \e, \quad C_1\coloneqq 3\sqrt 2 e^{\sqrt 2 L\|{D}\|_{L^2} T},
    \end{equation}
    where $L>0$ is the Lipschitz constant of the vector fields $F_0^\theta,F_1^\theta$. Recalling that $\Theta$ is compact and observing the smooth dependence of $\tilde F_0^\theta, \tilde F_1^\theta$ in $\theta$ (see \cref{eq:aff_ctrl_sys}), we conclude that $F_0^\theta,F_1^\theta$ are uniformly Lipschitz continuous in the state variable, since they are a smooth truncation supported on $B_{3K_{\max}}(0)\subset \R^{n}$ of $\tilde F_0^\theta, \tilde F_1^\theta$. Hence, we conclude that \cref{eq:lip_dep_traj} holds uniformly in $\theta$, which in turn implies that
    \begin{equation*}
      \sup_{\theta\in\Theta}  \e  \left|F_i^\theta\big( X_{{D+\e\delta}}^\theta(\tau) \big) - F_i^\theta\big( X_{{D}}^\theta(\tau) \big)\right| \leq L C_1  \|{\delta}\|_{L^2} \e^2 \qquad i=0,1.
    \end{equation*}
    Finally, we need to show that there exists a modulus of continuity $\delta\colon \R_+ \to \R_+$ such that 
    \begin{equation*}
        \left| 
        F_i^\theta (x_2) - F_i^\theta (x_1) - \frac{\partial F_i^\theta(x_1)}{\partial x}(x_1-x_2) 
        \right| \leq \delta(|x_2-x_1|) |x_2-x_1| \qquad i=0,1
    \end{equation*}
    for every $\theta\in \Theta$ and for every $x_1,x_2 \in \R^{n}$. However, this follows again from the smooth dependence of $\tilde F_0^\theta, \tilde F_1^\theta$ in $\theta$ and from the fact that $F_i^\theta(x) = \rho(x) \tilde F_i^\theta(x)$ for $i=0,1$, with $\rho\colon \R^{n}\to \R$ smooth and compactly supported cut-off function.
    Having observed that, we deduce that the estimates done in the proof of \cite[Proposition~2.4]{scag23gf} hold uniformly in $\theta$, and we deduce the thesis.
\end{proof}

\begin{proof}[Proof of \cref{lem:tangent_domain}]
    Denoting for brevity with $C_{D}$ the right-hand side of \cref{eq:char_tangent}, we first show that $T({D},\U)\subseteq C_{D}$.
    Let us consider a sequence $(\e_n)_{n\geq 1}$ such that $\e_n\to 0$ as $n\to\infty$, and let us consider $v_{\e_n} =\frac{{D}'_n- {D}}{\e_n} \in \frac{\U- {D}}{\e_n}$, with ${D}'_n\in\U$. Let us introduce
    \begin{equation} \label{eq:instant_0_1}
        \begin{split}
            A_0\coloneqq \{t\in[0,T] : {D}(t)=0\}, \\
            A_1\coloneqq \{t\in[0,T] : {D}(t)={D_{\max}}\}.
        \end{split}
    \end{equation}
    Since $0\leq {D}'_n\leq {D_{\max}}$ a.e.~and for every $n\geq 1$, it turns out that
    \begin{equation*}
        \begin{split}
            0\leq v_{\e_n}(t)\leq \frac{{D_{\max}}}{\e_n} \quad \mbox{a.e. on } A_0,\\
            -\frac{{D_{\max}}}{\e_n} \leq v_{\e_n}(t)\leq  0 \quad \mbox{a.e. on } A_1,
        \end{split}
    \end{equation*}
    which, in particular, implies that $v_{\e_n}(t)\geq 0$ and $v_{\e_n}(t)\leq 0$ a.e.~on $A_0,A_1$, respectively.
    Moreover, let $v \in L^2([0,T],\R)$ be a $L^2$-strong cluster point of the sequence $(v_{\e_n})_{n\geq 1}$. Therefore, there exists a subsequence of $(v_{\e_n})_{n\geq 1}$ that is converging to $v$ at a.e.~$t \in [0,T]$, and we deduce that $v \in C_{D}$.\\
    We now address the inclusion $C_{D} \subseteq T({D},\U)$. Let us fix $v\in C_{D}$ and $\delta>0$. We aim at constructing $\bar \e>0$ and ${D}'_{\bar \e}\in \U$ such that $\| v- \frac1{\bar \e}({D}'_{\bar \e} - {D} )\|_{L^2}\leq \delta$.
    To do that, we first choose $M>0$ such that $\| v- v_M\|_{L^2}\leq \frac\delta2$, where $v_M\in L^2([0,T],\R)$ is defined as the truncation of $v$, i.e., $v_M(t)\coloneqq \min\left( \max\big( v(t), -M \big),M \right)$ for a.e.~$t\in [0,T]$.
    Then, let us introduce $A_b\coloneqq [0,T]\setminus (A_0 \cup A_1)$, i.e., $A_b= \{ t\in[0,T]: 0<{D}(t)<{D_{\max}} \}$.
    Moreover, we define $A_b^k \coloneqq \left\{ t\in[0,T]: \min\big( {D}(t),{D_{\max}}-{D}(t) \big) \geq 1/k \right\}$ for $k\geq 2$, and we observe that 
    \begin{equation*}
        A_b^k\subseteq A_b^{k+1} \quad \forall k\geq 2, \quad \mbox{and } \quad
        A_b= \bigcup_{k=2}^\infty
        A_b^k.
    \end{equation*}
    This implies that there exists $\bar k\geq 2$ such that 
    \begin{equation*}
        \left\| v_M - v_M \mathds{1}_{A_0\cup A_1\cup A_b^{\bar k}}  \right\|_{L^2}\leq \frac\delta2,    
    \end{equation*}
    where $\mathds{1}_B\colon [0,T]\to \{0,1 \}$ denotes the function that is equal to $1$ on the Borel set $B$, and $0$ elsewhere. Setting $A^\delta\coloneqq A_0\cup A_1\cup A_b^{\bar k}$ for brevity, using the triangular inequality, we notice that $\left\| v - v_M \mathds{1}_{A^\delta}  \right\|_{L^2}\leq \delta$.
    We are left to show that, setting $\bar \e = 1/(\bar k  M)$, the function ${D}'_{\bar \e}\coloneqq {D} + \bar \e v_M\mathds{1}_{A^\delta}$ belongs to $\U$, i.e., $0\leq {D}'_{\bar \e} \leq {D_{\max}}$ a.e.
    We observe that:
    \begin{itemize}
        \item If $t \in A_0$, then by definition of $C_{D}$ we have that $v(t)\geq 0$ and $M\geq v_M(t) \geq 0$, while $\mathds{1}_{A^\delta}(t)=1$.
        Hence, ${D}'_{\bar \e}(t) = \bar \e v_M(t) \in [0, {D_{\max}}/\bar k]$, and in particular $0\leq {D}'_{\bar \e}(t) \leq {D_{\max}}$.
        \item If $t \in A_1$, then by definition of $C_{D}$ we have that $v(t)\leq 0$ and $-M \leq v_M(t) \leq 0$, while $\mathds{1}_{A^\delta}(t)=1$.
        Hence, ${D}'_{\bar \e}(t) = {D_{\max}} + \bar \e v_M(t) \in [{D_{\max}}- {D_{\max}}/\bar k, {D_{\max}}]$, and in particular $0\leq {D}'_{\bar \e}(t) \leq {D_{\max}}$.
        \item If $t \in A_b^{\bar k}$, we have that $-M \leq v_M(t) \leq M$ and ${D}(t) \in [1/\bar k, {D_{\max}}-1/\bar k]$, while $\mathds{1}_{A^\delta}(t)=1$.
        Hence, ${D}'_{\bar \e}(t) = {D}(t) + \bar \e v_M(t)\in [0,{D_{\max}}]$.
        \item Finally, if $t \in A_b\setminus A_b^{\bar k}$, we observe that $\mathds{1}_{A^\delta}(t)=0$, and therefore we have ${D}'_{\bar \e}(t) = {D}(t)\in [0,{D_{\max}}]$ since ${D}\in \U$.
    \end{itemize}
    Since this shows the inclusion $C_{D} \subseteq T({D},\U)$, the proof is complete. 
\end{proof}
    
\end{appendix}

\bibliographystyle{abbrv}
\bibliography{biblio}

\begin{thebibliography}{10}

\bibitem{AB25}
A.~Abdel~Wahab and P.~Bettiol.
\newblock Necessary optimality conditions for minimax multiprocesses.
\newblock {\em Applied Mathematics \& Optimization}, 91(1):15, 2025.

\bibitem{aguade2024modeling}
G.~Aguad{\'e}-Gorgori{\'o}, A.~R. Anderson, and R.~Sol{\'e}.
\newblock Modeling tumors as species-rich ecological communities.
\newblock {\em bioRxiv}, 2024.

\bibitem{ABR24}
A.~\'Alvarez-L\'opez, B.~Geshkovski, and D.~Ruiz-Balet.
\newblock Constructive approximate transport maps with normalizing flows.
\newblock {\em Applied Mathematics and Optimization}, 92(2), 2025.

\bibitem{ASZu}
A.~{\'A}lvarez-L{\'o}pez, A.~H. Slimane, and E.~Zuazua.
\newblock Interplay between depth and width for interpolation in neural {ODE}s.
\newblock {\em Neural Networks}, 180:106640, 2024.

\bibitem{aronna2025singular}
M.~S. Aronna, G.~{de Lima Monteiro}, and O.~{Sierra}.
\newblock Singular arcs on average optimal control-affine problems.
\newblock In {\em 2025 IEEE 64th Conference on Decision and Control (CDC)},
  pages 3748--3753. IEEE, 2025.

\bibitem{aronna2025average}
M.~S. Aronna, G.~{de~Lima~Monteiro}, and O.~Sierra~Fonseca.
\newblock Average optimal control of uncertain control-affine systems.
\newblock {\em Set-Valued and Variational Analysis}, 33(4), 2025.

\bibitem{aronna2024dynamic}
M.~S. Aronna, M.~Palladino, and O.~{Sierra}.
\newblock Dynamic programming principle and {H}amilton-{J}acobi-{B}ellman
  equation for optimal control problems with uncertainty.
\newblock {\em ESAIM: Control, Optimisation and Calculus of Variations}, 32:28,
  2026.

\bibitem{AuBoSi18}
N.~Augier, U.~Boscain, and M.~Sigalotti.
\newblock Adiabatic ensemble control of a continuum of quantum systems.
\newblock {\em SIAM J. Control Optim.}, 56(6):4045--4068, 2018.

\bibitem{auricchio2024facility}
G.~Auricchio, Z.~Wang, and J.~Zhang.
\newblock Facility location problems with capacity constraints: two facilities
  and beyond.
\newblock In {\em Proceedings of the Thirty-Third International Joint
  Conference on Artificial Intelligence}, pages 2651--2659, 2024.

\bibitem{auricchio2024k}
G.~Auricchio and J.~Zhang.
\newblock The k-facility location problem via optimal transport: A bayesian
  study of the percentile mechanisms.
\newblock In {\em International Symposium on Algorithmic Game Theory}, pages
  147--164. Springer, 2024.

\bibitem{auricchio2024extended}
G.~Auricchio, J.~Zhang, and M.~Zhang.
\newblock Extended ranking mechanisms for the m-capacitated facility location
  problem in bayesian mechanism design.
\newblock In {\em Proceedings of the 23rd International Conference on
  Autonomous Agents and Multiagent Systems}, pages 87--95, 2024.

\bibitem{Bertsekas}
D.~P. Bertsekas.
\newblock {\em Nonlinear Programming}.
\newblock Athena Scientific, Belmont, MA, 2nd edition, 1999.

\bibitem{BK19}
P.~Bettiol and N.~Khalil.
\newblock Necessary optimality conditions for average cost minimization
  problems.
\newblock {\em Discete Contin. Dyn. Syst. - B}, 24(5):2093--2124, 2019.

\bibitem{BK24}
P.~Bettiol and N.~Khalil.
\newblock Average cost minimization problems subject to state constraints.
\newblock {\em SIAM J. Control Optim.}, 62(3):1884--1907, 2024.

\bibitem{Bressan_Piccoli}
A.~Bressan and B.~Piccoli.
\newblock {\em Introduction to the Mathematical Theory of Control}, volume~1.
\newblock American Institute of Mathematical Sciences, Springfield, 2007.

\bibitem{bruchovsky2006final}
N.~Bruchovsky, L.~Klotz, J.~Crook, S.~Malone, C.~Ludgate, W.~J. Morris, M.~E.
  Gleave, and S.~L. Goldenberg.
\newblock Final results of the canadian prospective phase ii trial of
  intermittent androgen suppression for men in biochemical recurrence after
  radiotherapy for locally advanced prostate cancer: clinical parameters.
\newblock {\em Cancer}, 107(2):389--395, 2006.

\bibitem{ChiGau18}
F.~C. Chittaro and J.-P. Gauthier.
\newblock Asymptotic ensemble stabilizability of the {B}loch equation.
\newblock {\em Sys. Control Lett.}, 113:36--44, 2018.

\bibitem{Cipriani2025}
C.~Cipriani, M.~Fornasier, and A.~Scagliotti.
\newblock From neurodes to autoencodes: A mean-field control framework for
  width-varying neural networks.
\newblock {\em European J Appl. Math.}, 36(2):188–230, 2025.

\bibitem{cunningham2018optimal}
J.~J. Cunningham, J.~S. Brown, R.~A. Gatenby, and K.~Sta{\v{n}}kov{\'a}.
\newblock Optimal control to develop therapeutic strategies for metastatic
  castrate resistant prostate cancer.
\newblock {\em Journal of Theoretical Biology}, 459:67--78, 2018.

\bibitem{edduweh2024liouville}
H.~Edduweh and S.~Roy.
\newblock A {L}iouville optimal control framework in prostate cancer.
\newblock {\em Applied Mathematical Modelling}, 134:417--433, 2024.

\bibitem{eisenhauer2009RECIST}
E.~A. Eisenhauer, P.~Therasse, J.~Bogaerts, L.~H. Schwartz, D.~Sargent,
  R.~Ford, J.~Dancey, S.~Arbuck, S.~Gwyther, M.~Mooney, et~al.
\newblock New response evaluation criteria in solid tumours: revised {RECIST}
  guideline (version 1.1).
\newblock {\em European Journal of Cancer}, 45(2):228--247, 2009.

\bibitem{Freedland2023}
S.~J. Freedland, M.~de~Almeida~Luz, U.~De~Giorgi, M.~Gleave, G.~T. Gotto, C.~M.
  Pieczonka, G.~P. Haas, C.-S. Kim, M.~Ramirez-Backhaus, A.~Rannikko,
  J.~Tarazi, S.~Sridharan, J.~Sugg, Y.~Tang, R.~F. Tutrone, B.~Venugopal,
  A.~Villers, H.~H. Woo, F.~Zohren, and N.~D. Shore.
\newblock Improved outcomes with enzalutamide in biochemically recurrent
  prostate cancer.
\newblock {\em New England Journal of Medicine}, 389(16):1453–1465, Oct.
  2023.

\bibitem{moff2009adaptive}
R.~A. Gatenby, A.~S. Silva, R.~J. Gillies, and B.~R. Frieden.
\newblock Adaptive therapy.
\newblock {\em Cancer Research}, 69(11):4894--4903, 2009.

\bibitem{hernandez2024averaged}
M.~Hern{\'a}ndez, M.~Lazar, and S.~Zamorano.
\newblock Averaged observations and turnpike phenomenon for parameter-dependent
  systems.
\newblock {\em arXiv preprint arXiv:2404.17455}, 2024.

\bibitem{karim2025early}
M.~U. Karim, S.~Tisseverasinghe, R.~Cartes, C.~Martinez, B.~Bahoric, and
  T.~Niazi.
\newblock Early versus delayed androgen deprivation therapy for biochemical
  recurrence after local curative treatment in non-metastatic hormone-sensitive
  prostate cancer: A systematic review of the literature.
\newblock {\em Cancers}, 17(2):215, 2025.

\bibitem{ledzewicz2024optimal}
U.~Ledzewicz and H.~Sch{\"a}ttler.
\newblock On the optimal control problem for a model of the synergy of chemo-
  and immunotherapy.
\newblock {\em Optimal Control Applications and Methods}, 45(2):575--593, 2024.

\bibitem{merigot2021non}
Q.~M{\'e}rigot, F.~Santambrogio, and C.~Sarrazin.
\newblock Non-asymptotic convergence bounds for wasserstein approximation using
  point clouds.
\newblock {\em Adv. Neur. Inf Process. Syst. (NeurIPS)}, 34:12810--12821, 2021.

\bibitem{mordukhovich2006variational1}
B.~S. Mordukhovich.
\newblock {\em Variational analysis and generalized differentiation I: Basic
  Theory}, volume 330.
\newblock Springer, Berlin-Heidelberg, 2006.

\bibitem{morselli2023agent}
D.~Morselli, M.~E. Delitala, and F.~Frascoli.
\newblock Agent-based and continuum models for spatial dynamics of infection by
  oncolytic viruses.
\newblock {\em Bulletin of Mathematical Biology}, 85(10):92, 2023.

\bibitem{morselli2024hybrid}
D.~Morselli, M.~E. Delitala, A.~L. Jenner, and F.~Frascoli.
\newblock A hybrid discrete-continuum modelling approach for the interactions
  of the immune system with oncolytic viral infections.
\newblock {\em Journal of Theoretical Biology}, 627:112462, 2026.

\bibitem{MP18}
R.~Murray and M.~Palladino.
\newblock A model for system uncertainty in reinforcement learning.
\newblock {\em Syst. Control Lett.}, 122:24--31, 2018.

\bibitem{Nelles2020}
O.~Nelles.
\newblock {\em Nonlinear System Identification: From Classical Approaches to
  Neural Networks, Fuzzy Models, and Gaussian Processes}.
\newblock Springer International Publishing, Switzerland, 2020.

\bibitem{NortonSimon77}
L.~Norton and R.~Simon.
\newblock Tumor size, sensitivity to therapy, and design of treatment
  schedules.
\newblock {\em Cancer Treat Rep}, 61(7):1307--1317, 1977.

\bibitem{PPF21}
A.~Pesare, M.~Palladino, and M.~Falcone.
\newblock Convergence results for an averaged {LQR} problem with applications
  to {R}einforcement {L}earning.
\newblock {\em Math. Control Signals Syst.}, 33:379--411, 2021.

\bibitem{preisser2024european}
F.~Preisser, R.~S. Abrams-Pompe, P.~J. Stelwagen, D.~B{\"o}hmer, F.~Zattoni,
  A.~Magli, J.~G. Rivas, R.~V. Dilme, M.~Sepulcri, A.~Eguibar, et~al.
\newblock European association of urology biochemical recurrence risk
  classification as a decision tool for salvage radiotherapy—a multicenter
  study.
\newblock {\em European Urology}, 85(2):164--170, 2024.

\bibitem{RABS}
R.~Robin, N.~Augier, U.~Boscain, and M.~Sigalotti.
\newblock Ensemble qubit controllability with a single control via adiabatic
  and rotating wave approximations.
\newblock {\em J. Diff. Equ.}, 318:414--442, 2022.

\bibitem{RuZu23}
D.~Ruiz-Balet and E.~Zuazua.
\newblock Neural {ODE} {C}ontrol for {C}lassification, {A}pproximation, and
  {T}ransport.
\newblock {\em SIAM Review}, 65(3):735--773, 2023.

\bibitem{RuLi12}
J.~Ruths and J.-S. Li.
\newblock Optimal control of inhomogenous ensembles.
\newblock {\em IEEE Trans. Aut. Control}, 57(8):2021--2032, 2012.

\bibitem{scag23gf}
A.~Scagliotti.
\newblock A gradient flow equation for optimal control problems with end-point
  cost.
\newblock {\em Journal of Dynamical and Control Systems}, 29(2):521--568, 2023.

\bibitem{Scag23}
A.~Scagliotti.
\newblock Optimal control of ensembles of dynamical systems.
\newblock {\em ESAIM: Control Optim Calc. Var.}, 29, 2023.

\bibitem{Scag25}
A.~Scagliotti.
\newblock Minimax problems for ensembles of control-affine systems.
\newblock {\em SIAM J. Control Optim.}, 63(1):502--523, 2025.

\bibitem{schattler2015optimal}
H.~Sch{\"a}ttler and U.~Ledzewicz.
\newblock {\em Optimal control for mathematical models of cancer therapies}.
\newblock Springer, New York, NY, 2015.

\bibitem{moff2012evolutionary}
A.~S. Silva, Y.~Kam, Z.~P. Khin, S.~E. Minton, R.~J. Gillies, and R.~A.
  Gatenby.
\newblock Evolutionary approaches to prolong progression-free survival in
  breast cancer.
\newblock {\em Cancer Research}, 72(24):6362--6370, 2012.

\bibitem{turnover2021}
M.~A. Strobl, J.~West, Y.~Viossat, M.~Damaghi, M.~Robertson-Tessi, J.~S. Brown,
  R.~A. Gatenby, P.~K. Maini, and A.~R. Anderson.
\newblock Turnover modulates the need for a cost of resistance in adaptive
  therapy.
\newblock {\em Cancer Research}, 81(4):1135--1147, 2021.

\bibitem{swanton2012heterogeneity}
C.~Swanton.
\newblock Intratumor heterogeneity: evolution through space and time.
\newblock {\em Cancer Research}, 72(19):4875--4882, 2012.

\bibitem{trelat2025turnpike}
E.~Tr{\'e}lat and E.~Zuazua.
\newblock Turnpike in optimal control and beyond: a survey.
\newblock {\em arXiv preprint arXiv:2503.20342}, 2025.

\bibitem{Vinter2005}
R.~B. Vinter.
\newblock Minimax optimal control.
\newblock {\em SIAM Journal on Control and Optimization}, 44(3):939–968,
  2005.

\bibitem{moff2019multidrug}
J.~B. West, M.~N. Dinh, J.~S. Brown, J.~Zhang, A.~R. Anderson, and R.~A.
  Gatenby.
\newblock Multidrug cancer therapy in metastatic castrate-resistant prostate
  cancer: an evolution-based strategy.
\newblock {\em Clinical Cancer Research}, 25(14):4413--4421, 2019.

\bibitem{moff2022evolution}
J.~Zhang, J.~Cunningham, J.~Brown, and R.~Gatenby.
\newblock Evolution-based mathematical models significantly prolong response to
  abiraterone in metastatic castrate-resistant prostate cancer and identify
  strategies to further improve outcomes.
\newblock {\em eLife}, 11:e76284, 2022.

\bibitem{moff2017zhang}
J.~Zhang, J.~J. Cunningham, J.~S. Brown, and R.~A. Gatenby.
\newblock Integrating evolutionary dynamics into treatment of metastatic
  castrate-resistant prostate cancer.
\newblock {\em Nature Communications}, 8(1):1816, 2017.

\end{thebibliography}

\end{document}